\documentclass[11pt,a4paper]{amsart}
\usepackage[margin=1.1in]{geometry}
\usepackage{amsmath,amssymb,amsthm,mathtools}
\usepackage{graphicx}
\usepackage{mathrsfs}
\usepackage{xcolor}
\definecolor{linkblue}{RGB}{25,60,120}
\usepackage[colorlinks=true,linkcolor=linkblue,citecolor=linkblue,urlcolor=linkblue]{hyperref}
\usepackage{microtype}

\newtheorem{theorem}{Theorem}[section]
\newtheorem{proposition}[theorem]{Proposition}
\newtheorem{lemma}[theorem]{Lemma}
\newtheorem{corollary}[theorem]{Corollary}
\theoremstyle{definition}
\newtheorem{definition}[theorem]{Definition}
\newtheorem{assumption}[theorem]{Assumption}
\theoremstyle{remark}
\newtheorem{remark}[theorem]{Remark}

\newcommand{\R}{\mathbb{R}}

\newcommand{\Id}{\mathrm{Id}}

\newcommand{\supp}{\operatorname{supp}}
\newcommand{\Res}{\operatornamewithlimits{Res}}
\newcommand{\dd}{\,\mathrm{d}}
\newcommand{\ellf}{\ell}
\newcommand{\sA}{\mathscr{A}}
\newcommand{\sG}{\mathcal{G}}
\newcommand{\sX}{\mathcal{X}}

\title[Plasmon poles for the linearized Coulomb--Hartree--Fock equation]{Persistence and analyticity of plasmon poles for the linearized Coulomb--Hartree--Fock equation in the small-exchange regime}

\author{Yuya Dan}
\address{Matsuyama University, 4-2 Bunkyo-cho, Matsuyama, Ehime 790-8578, Japan}
\email{dan@g.matsuyama-u.ac.jp}

\date{August 30, 2026}

\subjclass[2020]{35Q40, 35B35, 35P25, 81V70, 82D10}
\keywords{Hartree--Fock equation, plasmon, Landau damping, dielectric function, exchange term, Coulomb interaction, Strichartz estimates}

\begin{document}

\begin{abstract}
We consider the time-dependent Hartree--Fock equation in $\R^3$ with the physical Coulomb interaction in both the direct and the exchange channel, the latter weighted by an exchange coupling $\eta$, linearized around spatially homogeneous equilibria $\gamma_f=g(-i\nabla)$ with compactly supported momentum profile. For the exchange-free Coulomb--Hartree equation, Nguyen and You proved that below a survival threshold $\kappa_0>0$ the linearized density carries two purely imaginary plasmon poles $\pm i\tau_0(|k|)$, uniformly separated from the particle--hole continuum on every compact subthreshold band; the available nonlinear Hartree--Fock theory, in turn, requires a short-range direct interaction and a smooth small exchange kernel and therefore excludes both Coulomb channels. On any compact band $0<k_-\le|k|\le k_+<\kappa_0$ we prove, unconditionally for the Coulomb exchange kernel: the two plasmon poles persist for small $|\eta|$, remain purely imaginary and simple, and depend real-analytically on $\eta$, with a perturbation series of uniform radius whose first-order coefficient splits into an explicit static self-energy correction and a dynamic vertex correction; each pole is realized by an exact undamped mode with a closed-form eigenfunction of unit density, so the exact collective modes are undamped on the band; the reduced fiber resolvent has rank-one poles and the causal density Green function decomposes into an explicit undamped sine wave plus a remainder whose Laplace transform is holomorphic near the poles; and the plasmon channel obeys Klein--Gordon-type dispersive decay $t^{-3/2}$ together with endpoint Strichartz estimates, uniformly in $\eta$. Exploiting the identity expressing the exchange self-energy increment as the Coulomb potential of $a_k$, we further prove a first-order Ward-type cancellation: the first-order coefficient obeys $|\tau_X(r)|\le Cr^2$ down to $r=0$, so the plasma gap is exchange-invariant at first order in the coupling, and we compute the exchange correction to the plasmon stiffness in closed form. Numerical evaluation for a $C^9$ smoothed Fermi ball satisfying the standing assumptions confirms the analysis and shows that the self-energy and vertex contributions cancel to within $1.2\%$ on the computed range, and that the exchange softens the plasmon dispersion, most strongly around $0.7\,\kappa_0$.
\end{abstract}

\maketitle

\setcounter{tocdepth}{1}
\tableofcontents

\section{Introduction}\label{sec:intro}

\subsection{The equation}
We study the time-dependent Hartree--Fock (HF) equation on $\R^3$,
\begin{equation}\label{eq:tdhf}
i\partial_t\Gamma=\bigl[-\Delta+v_C*(\rho_\Gamma-\rho_b)-\eta\,\sX_{v_C,\Gamma},\,\Gamma\bigr],
\qquad v_C(x)=\frac{1}{|x|},
\end{equation}
for a self-adjoint one-particle density operator $\Gamma(t)$ on $L^2(\R^3)$ with integral kernel $\Gamma(x,y)$, density $\rho_\Gamma(x)=\Gamma(x,x)$, a rigid neutralizing background of constant density $\rho_b$ (jellium), and exchange operator
\begin{equation}\label{eq:exchange}
\sX_{v_C,\Gamma}(x,y)=v_C(x-y)\,\Gamma(x,y).
\end{equation}
The parameter $\eta\in\R$ is an \emph{exchange coupling}: $\eta=0$ is the (Coulomb--)Hartree equation, while $\eta=1$ corresponds to the physical Hartree--Fock strength. Throughout we work in the small-exchange regime $|\eta|\ll1$ and we keep the \emph{physical Coulomb kernel in both channels}: no smoothing, screening, or cutoff is imposed on \eqref{eq:exchange}.

Equation \eqref{eq:tdhf} admits a large family of translation-invariant steady states, the Fourier multipliers
\begin{equation}\label{eq:equilibrium}
\gamma_f=g(-i\nabla),\qquad g(p)=f(|p|^2),\qquad 0\le g\le1,
\end{equation}
which include Fermi gases at zero and positive temperature; the constraint $0\le\gamma_f\le1$ is the fermionic admissibility condition. The subject of this paper is the linearization of \eqref{eq:tdhf} around \eqref{eq:equilibrium} and, specifically, the fate of the \emph{plasmon oscillations} of the Coulomb--Hartree dynamics when the Coulomb exchange term is switched on.

\subsection{Known results and the gap addressed here}\label{sec:known}
The mathematical theory of \eqref{eq:tdhf} near translation-invariant equilibria was initiated by Lewin and Sabin \cite{LewinSabin1,LewinSabin2} for the Hartree equation ($\eta=0$) with short-range interactions, where they established well-posedness in spaces of infinite-trace perturbations as well as asymptotic stability in dimension two; the theory was subsequently developed by many authors, always under the assumption that the interaction potential $w$ is short range, $\widehat w\in L^\infty$. For the \emph{Coulomb} interaction this assumption fails in an essential way. Nguyen and You \cite{NY1} showed that for compactly supported equilibria the linearized Coulomb--Hartree equation exhibits a \emph{survival threshold} $\kappa_0>0$: for $0<|k|<\kappa_0$ the spacetime symbol (Lindhard dielectric function) $D_0(\lambda,k)$ possesses exactly two simple, purely imaginary zeros
\[
\lambda_{\pm,0}(k)=\pm i\tau_0(|k|),
\]
lying strictly above the particle--hole continuum --- undamped \emph{plasmons} obeying a Klein--Gordon-type dispersion relation --- while for $|k|>\kappa_0$ the plasmons dissolve into the continuum and Landau damping sets in; see also \cite{NguyenJFA} for the closely related survival-threshold phenomenon in the kinetic setting, and \cite{MouhotVillani,BMM,HKNR} for Landau damping in the classical Vlasov theory.

On the Hartree--Fock side, the first stability result near homogeneous equilibria in the presence of a (small) exchange term is due to Collot, Danesi, de~Suzzoni and Mal\'ez\'e \cite{CDSM}, via the random-field formulation in dimensions $d\ge4$. Very recently, Nguyen and You \cite{NY2} established nonlinear Landau damping, asymptotic stability, and scattering for \eqref{eq:tdhf}-type equations in dimension $d\ge 3$ near positive equilibria, under two structural hypotheses: the direct interaction $w_1$ is short range ($\widehat w_1$ bounded --- hypothesis (H3) of \cite{NY2}), and the exchange kernel $w_2$ is smooth and small, $\widehat w_2\in W^{2n_0,\infty}$ with $\|\widehat w_2\|_{W^{2n_0,\infty}}\ll1$ (hypothesis (H4)). These hypotheses exclude the Coulomb kernel $\widehat v_C(q)=4\pi|q|^{-2}$ in \emph{both} channels, and deliberately so: under (H3) the strong Penrose--Lindhard condition holds (\cite[Theorem~3.3]{NY2}) and \emph{no plasmon poles exist at all}. Moreover, in the linear analysis of \cite{NY2} the dynamic exchange response $[\sX_{w_2,\gamma},\gamma_f]$ is not incorporated into the dielectric function --- the exchange enters the linear theory only through the equilibrium self-energy correction of the dispersion relation --- and is instead controlled perturbatively in the nonlinear iteration.

Thus the coexistence of plasmons (forced by the long-range direct Coulomb interaction) with a genuinely singular Coulomb exchange term lies outside the scope of \cite{NY1,NY2,CDSM}. The present paper provides the corresponding \emph{linear} theory on compact subthreshold wave-number bands: the persistence of the plasmon poles under the physical Coulomb exchange, their analytic dependence on the exchange coupling, an exact modal realization, the pole decomposition of the density Green function, dispersive and Strichartz estimates for the plasmon channel, and a first-order Ward-type cancellation at small wave numbers. All results are uniform on bands $I=[k_-,k_+]\Subset(0,\kappa_0)$ and unconditional for the Coulomb exchange kernel.

\subsection{Main results}\label{sec:mainresults-intro}
Fix a compact band $I=[k_-,k_+]\Subset(0,\kappa_0)$ and let $\Omega_I=\{k\in\R^3:k_-\le|k|\le k_+\}$. Write $\lambda$ for the Fourier--Laplace frequency dual to $t$ and let $D_\eta(\lambda,k)$ denote the \emph{exchange-resummed dielectric function} constructed in Section~\ref{sec:schur}: the exchange block of the linearized equation is inverted by a Neumann series and the Coulomb--Hartree density channel is then eliminated by a Feshbach--Schur complement, so that $D_\eta$ is an exact scalar function whose zeros near $\pm i\tau_0(|k|)$ are precisely the poles of the reduced fiber resolvent (Proposition~\ref{prop:schur}). Our main results (Theorems~\ref{thm:main} and~\ref{thm:ward} below) can be summarized as follows.

\begin{itemize}
\item[(a)] \emph{Persistence and pure imaginarity} (Theorem~\ref{thm:main}\,(i)). There are $\eta_0,r_0>0$ such that for $|\eta|<\eta_0$ and $k\in\Omega_I$ the function $D_\eta(\cdot,k)$ has exactly one simple zero $\lambda_{\sigma,\eta}(k)=\sigma i\tau_\eta(|k|)$ in the disk of radius $r_0$ about $\sigma i\tau_0(|k|)$, $\sigma=\pm$, with $\tau_\eta$ real, radial, and of class $C^m$.
\item[(b)] \emph{Analyticity in the exchange coupling} (Theorem~\ref{thm:main}\,(ii)). The map $\eta\mapsto\tau_\eta(r)$ is real-analytic with a convergent expansion $\tau_\eta=\tau_0+\sum_{n\ge1}\eta^n\tau_X^{(n)}$ of uniform radius on $I$; the coefficients are explicit finite-dimensional absolutely convergent integrals. The first-order coefficient splits as
\[
\tau_X=\tau_X^{\mathrm{self}}+\tau_X^{\mathrm{vertex}},
\]
a static self-energy correction plus a dynamic vertex correction, both explicit; the second-order coefficient is computed as well.
\item[(c)] \emph{Exact plasmon modes} (Theorem~\ref{thm:main}\,(iii)). Each pole is realized by an exact solution $e^{\sigma i\tau_\eta(|k|)t}\varphi_{\sigma,\eta,k}$ of the linearized equation, with the closed-form eigenfunction $\varphi_{\sigma,\eta,k}=-iV(k)E_\eta m_\eta$ of compact momentum support and unit density amplitude, $\ellf(\varphi_{\sigma,\eta,k})=1$. In particular $\pm i\tau_\eta(|k|)$ are eigenvalues, with one-dimensional eigenspaces, of the closed fiber generator on a weighted $L^2$ space, embedded in its essential spectrum $i\R$; the exact collective modes are undamped.
\item[(d)] \emph{Green function decomposition} (Theorem~\ref{thm:main}\,(iv)). The reduced fiber resolvent has rank-one residues, and the causal density Green function decomposes as $\delta(t)$ plus an explicit \emph{undamped sine wave} plus a remainder whose Laplace transform is holomorphic and uniformly bounded near the two poles.
\item[(e)] \emph{Dispersive and Strichartz estimates} (Theorem~\ref{thm:main}\,(v)). The plasmon channel obeys $\|K_{\sigma,\eta}(t)*u\|_{L^p_x}\le Ct^{-3(1/2-1/p)}\|u\|_{L^{p'}_x}$ for $t\ge1$, $2\le p\le\infty$, and the associated propagator satisfies the full range of (endpoint) Strichartz estimates, uniformly in $|\eta|<\eta_0$.
\item[(f)] \emph{First-order Ward-type cancellation} (Theorem~\ref{thm:ward}). For every $k_+<\kappa_0$ there is $C$ with
\[
|\tau_X(r)|\le C r^2,\qquad 0<r\le k_+,
\]
and $\tau_X(r)/r^2\to\beta_X$ as $r\to0$ with the closed-form stiffness correction
\[
\beta_X=\frac{1}{4\pi^4\,\omega_p^3}\iint_{\R^3\times\R^3}
\frac{\partial_1g(p)\,\partial_1g(q)\,(p_1-q_1)^2}{|p-q|^2}\dd p\dd q,
\qquad \omega_p=\tau_0(0^+).
\]
In particular the plasma gap is exchange-invariant at first order in the coupling --- a statement about the coefficient $\tau_X$ at fixed wave number, see Remark~\ref{rem:wardmeaning} --- although $\tau_X^{\mathrm{self}}$ and $\tau_X^{\mathrm{vertex}}$ are individually of order one: the cancellation is structural, being a consequence of the identity $A_{X,k}=Wa_k$ proved in Lemma~\ref{lem:AXident}.
\end{itemize}

Section~\ref{sec:numerics} complements the analysis with a numerical evaluation for a $C^9$ smoothed Fermi ball satisfying \textup{(G1)--(G2)}: the two first-order components cancel to within $1.2\%$ on the computed range $[0.02\,\kappa_0,\,0.95\,\kappa_0]$; the net shift is quadratic at small wave numbers with $\beta_X\approx-0.109$ (a softening of the plasmon stiffness by about $6.6\%$ per unit $\eta$ for that model), attains its most negative value near $0.71\,\kappa_0$, and returns towards zero as the threshold is approached. The numerics also confirm, to four digits, the closed-form expressions for $\beta_X$ and for the small-$k$ limit of the normalization integral.

\subsection{What is not proved here}\label{sec:notproved}
We state explicitly what this paper does \emph{not} prove, since the division reflects the assumptions of \cite{NY1,NY2} rather than technical convenience: the uniform limit $k\to0$ at the level of the resolvent (the Ward-type cancellation is established here for the first-order coefficient, Theorem~\ref{thm:ward}, but not for the full resolvent, which requires tracking the cancellation between the Coulomb factor $|k|^{-2}$ and the density response through the dynamic exchange vertex); the threshold regime $|k|\to\kappa_0$, where the isolated pole meets the continuum and a Plemelj boundary-value and resonance analysis, together with the first-order exchange correction of the threshold $\kappa_\eta$ and of the Landau damping rate, is required; full spectral stability on the closed right half-plane for $\eta\ne0$ (for $\eta=0$ this is part of \cite[Theorem~1.1]{NY1}); optimal pointwise-in-time decay of the continuous component of the Green function with the physical Coulomb exchange, for which the smooth-kernel estimates of \cite{NY2} are not applicable and new fractional-regularity boundary-value estimates are needed (a precise conditional criterion is given in Section~\ref{sec:conditional}); and any nonlinear statement. These are discussed in Section~\ref{sec:conclusion}.

\subsection{Method}
The proofs are elementary in the best sense: they combine (1) the observation that on bounded relative-momentum regions the Coulomb exchange operator is a \emph{weakly singular, compact} operator with an explicit Schur bound (Section~\ref{sec:exchangeop}); (2) a uniform spectral gap between the plasmon frequencies and the particle--hole continuum on compact subthreshold bands, inherited from \cite{NY1} (Lemma~\ref{lem:gap}); (3) a Neumann resummation of the dynamic exchange block followed by a Feshbach--Schur elimination of the Hartree density channel, producing the scalar dielectric function $D_\eta$ (Section~\ref{sec:schur}); (4) Rouch\'e's theorem, the real-analytic implicit function theorem and a parity symmetry, giving persistence, pure imaginarity and the $\pm$ symmetry (Section~\ref{sec:persistence}); (5) the affine dependence $L_\eta=L_0+\eta C_k$ of the fiber symbol on $\eta$, which turns the perturbation series into a geometric series with uniform radius and yields analyticity by a contour-integral argument (Section~\ref{sec:analyticity}); (6) stationary phase with a uniformly non-degenerate Hessian for the dispersive estimates (Section~\ref{sec:dispersive}); and (7) the identity $A_{X,k}=Wa_k$, which converts the difference of the self-energy and vertex contributions into a single well-conditioned integral carrying an explicit factor $(2r(p_1-q_1))^2$, whence the Ward-type cancellation (Section~\ref{sec:ward}).

\subsection*{Notation}
$\widehat u(k)=\int_{\R^3}e^{-ix\cdot k}u(x)\dd x$ and $\ellf(h)=(2\pi)^{-3}\int_{\R^3}h(p)\dd p$. We set $V(k)=\widehat{v_C}(k)=4\pi|k|^{-2}$. For $A\lesssim B$ we mean $A\le CB$ with $C$ depending only on $g$ and the band $I$. The letter $\sigma\in\{+,-\}$ indexes the two poles. Appendix~\ref{app:conventions} records the translation between our symmetric fiber coordinates and the conventions of \cite{NY1,NY2}.

\section{The model, the fiber reduction, and the Hartree input}\label{sec:model}

\subsection{Standing assumptions on the equilibrium}\label{sec:standing}
Throughout, $m\ge6$ is a fixed integer and the momentum profile $g$ in \eqref{eq:equilibrium} satisfies:
\begin{itemize}
\item[(G1)] $g(p)=f(|p|^2)$ with $f\in C^{m+3}([0,\infty))$, $0\le f\le1$, and $\supp f\subset[0,\Upsilon^2]$ for some $\Upsilon>0$; in particular $g\in C^{m+3}_c(\R^3)$ is radial with $\supp g\subset\overline{B_\Upsilon}$;
\item[(G2)] $g$ satisfies the hypotheses of \cite[Section~1.4]{NY1} for compactly supported equilibria: $g>0$ on $\{|p|<\Upsilon\}$ (connected support), and $g(p)/(\Upsilon-|p|)^{n_1}$ has a positive limit as $|p|\to\Upsilon$ for some $n_1\ge1$.
\end{itemize}
The background density in \eqref{eq:tdhf} is fixed once and for all as $\rho_b:=\rho_{\gamma_f}=(2\pi)^{-3}\int g$, so that the direct term $v_C*(\rho_\Gamma-\rho_b)$ is well defined for perturbations with decaying density and $\gamma_f$ is an exact steady state of \eqref{eq:tdhf}; equivalently, the linearized equation \eqref{eq:fibereq} below may be taken as the definition of the model. We emphasize that the regularity in (G1) is imposed on the energy-variable profile $f$, exactly as in \cite[Section~1.4]{NY1}: with $m\ge6$ this gives $f\in C^{9}$, matching the requirement $n_0\ge\frac{d+15}{2}=9$ of \cite{NY1} for $d=3$, and $g=f(|\cdot|^2)\in C^{9}(\R^3)$ follows by composition (the converse implication would not hold in finite regularity, which is why the hypothesis is placed on $f$).

The \emph{equilibrium exchange self-energy} and the exchange-corrected dispersion are
\begin{equation}\label{eq:selfenergy}
\Sigma(p)=\frac{1}{(2\pi)^3}\int_{\R^3}V(p-q)\,g(q)\dd q,\qquad
\epsilon_\eta(p)=|p|^2-\eta\,\Sigma(p).
\end{equation}
Since $g$ is a Fourier multiplier and $\sX_{v_C,\gamma_f}$ is the Fourier multiplier with symbol $\Sigma$, the equilibrium $\gamma_f$ is an exact steady state of \eqref{eq:tdhf} for every $\eta$.

\subsection{Fiberization}\label{sec:fiber}
Let $Q=\Gamma-\gamma_f$ and linearize \eqref{eq:tdhf} in $Q$. Writing $\widehat Q(p_1,p_2)$ for the momentum kernel and passing to the symmetric coordinates (center $p$, spatial wave number $k$)
\begin{equation}\label{eq:fibercoord}
q_k(t,p)=\widehat Q\Bigl(t,\,p+\tfrac k2,\,p-\tfrac k2\Bigr),\qquad
\rho_k(t)=\ellf(q_k(t)),
\end{equation}
we introduce
\begin{equation}\label{eq:aA}
a_k(p)=g\Bigl(p-\tfrac k2\Bigr)-g\Bigl(p+\tfrac k2\Bigr),\qquad
A_{\eta,k}(p)=\epsilon_\eta\Bigl(p+\tfrac k2\Bigr)-\epsilon_\eta\Bigl(p-\tfrac k2\Bigr),
\end{equation}
\begin{equation}\label{eq:W}
(Wh)(p)=\frac{1}{(2\pi)^3}\int_{\R^3}V(p-q)\,h(q)\dd q
=\frac{1}{2\pi^2}\int_{\R^3}\frac{h(q)}{|p-q|^2}\dd q .
\end{equation}
Note $A_{0,k}(p)=2p\cdot k$ and $S_k:=\supp a_k\subset\{|p|\le\Upsilon+\tfrac{|k|}2\}$.

\begin{lemma}[Linearized fiber equation]\label{lem:fiber}
The linearization of \eqref{eq:tdhf} around $\gamma_f$ decouples over the spatial wave number $k\ne0$:
\begin{equation}\label{eq:fibereq}
i\partial_t q_k=A_{\eta,k}\,q_k+a_k\,V(k)\,\rho_k-\eta\,a_k\,Wq_k .
\end{equation}
Distinct wave numbers do not couple, while the exchange term $W$ couples the relative momenta within each fiber nonlocally.
\end{lemma}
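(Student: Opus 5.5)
The plan is to substitute $\Gamma=\gamma_f+Q$ into \eqref{eq:tdhf}, drop terms quadratic in $Q$, and compute every remaining term in the momentum representation. The equilibrium terms cancel because $\gamma_f$ is a steady state, as noted after \eqref{eq:selfenergy}. Since $\rho_{\gamma_f}=\rho_b$ and $\sX_{v_C,\gamma_f}$ is the multiplier $\Sigma(-i\nabla)$, the linearized equation reads
\[
i\partial_t Q=\bigl[\epsilon_\eta(-i\nabla),\,Q\bigr]+\bigl[v_C*\rho_Q-\eta\,\sX_{v_C,Q},\,\gamma_f\bigr].
\]
In it, $-\Delta-\eta\sX_{v_C,\gamma_f}=\epsilon_\eta(-i\nabla)$. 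The background drops out because only $\rho_\Gamma-\rho_b=\rho_Q$ survives.

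Two elementary kernel identities will be used throughout. First, for a multiplier $h(-i\nabla)$ the commutator kernel is
\[
\widehat{[h(-i\nabla),Q]}(p_1,p_2)=\bigl(h(p_1)-h(p_2)\bigr)\widehat Q(p_1,p_2).
\]
Second, for any operator $B$,
\[
\widehat{[B,\gamma_f]}(p_1,p_2)=\widehat B(p_1,p_2)\bigl(g(p_2)-g(p_1)\bigr).
\]
In the fiber coordinates \eqref{eq:fibercoord}, with $p_1=p+\tfrac k2$ and $p_2=p-\tfrac k2$, the first identity gives $A_{\eta,k}q_k$ and the factor $g(p_2)-g(p_1)$ in the second is exactly $a_k(p)$.

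It then remains to identify $\widehat B$ in fiber coordinates for the two interaction terms.

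\emph{Direct term.} $B$ is multiplication by $v_C*\rho_Q$, so $\widehat B(p_1,p_2)$ depends only on $p_1-p_2=k$. Up to the fixed $(2\pi)^3$ normalization, it equals $V(k)\,\widehat{\rho_Q}(k)$. Moreover $\widehat{\rho_Q}(k)=(2\pi)^{-3}\int\widehat Q(p+\tfrac k2,p-\tfrac k2)\dd p=\ellf(q_k)=\rho_k$. This yields the term $a_kV(k)\rho_k$.

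\emph{Exchange term.} $\sX_{v_C,Q}(x,y)=v_C(x-y)Q(x,y)$ is a pointwise product of kernels. Its momentum kernel is therefore a convolution in the variable $p_1$ with $p_1-p_2$ held fixed:
\[
\widehat{\sX_{v_C,Q}}(p_1,p_2)=(2\pi)^{-3}\int V(p_1-p_1')\,\widehat Q(p_1',p_1'-k)\dd p_1'.
\]
Changing variables $p_1'=q+\tfrac k2$ gives $(Wq_k)(p)$ as in \eqref{eq:W}, so this term contributes $-\eta\,a_kWq_k$.

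Collecting the three terms gives \eqref{eq:fibereq}. Decoupling in $k$ is automatic, because each of the three terms preserves $p_1-p_2$; this reflects the translation invariance of $\gamma_f$ and of $v_C$. The nonlocal coupling within a fiber comes only from the integral operator $W$ acting in $p$.

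The main point requiring care is bookkeeping rather than analysis. One must fix the Fourier convention for two-variable kernels consistently with $\widehat u(k)=\int e^{-ix\cdot k}u$, so that the factors $(2\pi)^{-3}$ land precisely in $\ellf$ and in $W$, and so that the sign of $a_k$ is correct.

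One must also justify that the Coulomb convolutions make sense. For $q_k$ bounded with compact support, or in the weighted class used later, $Wq_k$ converges absolutely since $|p-q|^{-2}$ is locally integrable in $\R^3$. The factor $V(k)$ is finite for $k\ne0$, which is why the statement is restricted to $k\ne0$. Alternatively, as the paper allows, \eqref{eq:fibereq} can be taken as the definition of the model, and the computation above serves as its derivation.
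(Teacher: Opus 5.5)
Your proposal is correct and follows essentially the same route as the paper. You linearize to $i\partial_tQ=[\epsilon_\eta(-i\nabla),Q]+[v_C*\rho_Q-\eta\,\sX_{v_C,Q},\gamma_f]$ and read off each commutator kernel at $(p+\tfrac k2,p-\tfrac k2)$, obtaining $A_{\eta,k}q_k$, $a_kV(k)\rho_k$ and $-\eta\,a_kWq_k$. One small correction: the ``up to $(2\pi)^3$'' caveat in the direct term is unnecessary. With the convention $\widehat Q(p_1,p_2)=\iint e^{-ix\cdot p_1+iy\cdot p_2}Q(x,y)\dd x\dd y$ implied by the density formula, multiplication by $v_C*\rho_Q$ has kernel exactly $V(p_1-p_2)\,\widehat{\rho_Q}(p_1-p_2)$.
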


\begin{proof}
With $h_{\eta,f}=-\Delta-\eta\,\sX_{v_C,\gamma_f}$ the linearized equation reads
$i\partial_tQ=[h_{\eta,f},Q]+[v_C*\rho_Q-\eta\,\sX_{v_C,Q},\,\gamma_f]$.
The first commutator has $(p+\frac k2,p-\frac k2)$ entry $A_{\eta,k}(p)q_k(p)$. Since $\widehat{\rho_Q}(k)=(2\pi)^{-3}\int\widehat Q(q+\frac k2,q-\frac k2)\dd q=\rho_k$, one computes
$[v_C*\rho_Q,\gamma_f](p+\frac k2,p-\frac k2)=a_k(p)V(k)\rho_k$. Finally the two-variable Fourier transform of $v_C(x-y)Q(x,y)$ gives
$\widehat{\sX_{v_C,Q}}(p+\frac k2,p-\frac k2)=(2\pi)^{-3}\int V(p-q)q_k(q)\dd q=(Wq_k)(p)$,
so that $[-\eta\sX_{v_C,Q},\gamma_f]$ contributes $-\eta a_k(p)(Wq_k)(p)$.
\end{proof}

Throughout, let
\begin{equation}\label{eq:Xw}
X_w:=\bigl\{q:\ \langle p\rangle^{2}q\in L^2(\R^3)\bigr\},\qquad
\|q\|_{X_w}:=\|\langle p\rangle^{2}q\|_{L^2},\qquad \langle p\rangle=(1+|p|^2)^{1/2}.
\end{equation}
Since $\langle p\rangle^{-2}\in L^2(\R^3)$, one has $X_w\hookrightarrow L^1\cap L^2(\R^3)$ and the density functional $\ellf$ is \emph{bounded} on $X_w$. The weight is the reason the fiber flow admits a closed generator: on $L^2$ itself the functional $\ellf$ is unbounded and not even closable (take $q_n(p)=n^{-2}\varphi(p_1)\psi(p_\perp/n)$: then $q_n\to0$ in $L^2$ while $\ellf(q_n)$ is constant), so no closed realization of the generator exists there.

\begin{lemma}[Well-posedness of the fiber flow]\label{lem:wp}
For every $k\ne0$, $\eta\in\R$, and $q_{k,0}\in X_w$, the Duhamel formulation of \eqref{eq:fibereq},
\[
q_k(t)=e^{-iA_{\eta,k}t}q_{k,0}+\int_0^te^{-iA_{\eta,k}(t-s)}\Bigl[-iV(k)a_k\,\ellf(q_k(s))+i\eta\,a_k(Wq_k)(s)\Bigr]\dd s,
\]
has a unique global mild solution $q_k\in C(\R;X_w)$, and $\|q_k(t)\|_{X_w}\le e^{C_k|t|}\|q_{k,0}\|_{X_w}$. Consequently the Fourier--Laplace transform $\widetilde q_k(\lambda)=\int_0^\infty e^{-\lambda t}q_k(t)\dd t$ is well defined and analytic in $\{\Re\lambda>C_k\}$, with values in $X_w$.
\end{lemma}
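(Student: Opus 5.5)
The plan is to treat the Duhamel formula as a fixed-point problem in $C([-T,T];X_w)$ and show that the free propagator and the two perturbation terms are bounded on $X_w$. Global existence and the exponential bound then follow from a standard contraction argument (or Gronwall), iterated in time.

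First, the free flow $e^{-iA_{\eta,k}t}$ is multiplication by a unimodular function, because $A_{\eta,k}$ is real-valued: $|p|^2$ is real and $\Sigma$ is real and finite. It therefore commutes with the weight $\langle p\rangle^2$ and is an isometry on $X_w$. Strong continuity in $t$ follows from dominated convergence.

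The perturbation is $B q:=-iV(k)a_k\,\ellf(q)+i\eta\,a_k\,Wq$, and I will show that $B$ is bounded on $X_w$.

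The first term is rank one. Since $a_k\in C_c$ with support in $S_k$, we have $\|a_k\|_{X_w}<\infty$. Moreover $|\ellf(q)|\le(2\pi)^{-3}\|\langle p\rangle^{-2}\|_{L^2}\|q\|_{X_w}$. This gives the bound $V(k)\|a_k\|_{X_w}\,C\|q\|_{X_w}$.

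For the second term, compact support of $a_k$ means it suffices to bound $\|Wq\|_{L^\infty(S_k)}$ or $\|Wq\|_{L^2(S_k)}$ by $\|q\|_{X_w}$. I split the kernel $|p-q|^{-2}$ into two pieces.
\begin{itemize}
\item On $|p-q|\le1$, the kernel lies in $L^1$ uniformly. Young's (Schur's) inequality gives an $L^2$ bound by $\|q\|_{L^2}\le\|q\|_{X_w}$.
\item On $|p-q|>1$, the kernel is bounded by $1$. The contribution is at most $\|q\|_{L^1}\lesssim\|q\|_{X_w}$, uniformly in $p$.
\end{itemize}
Hence $\|a_kWq\|_{X_w}\le\|\langle p\rangle^2a_k\|_{L^\infty}\|Wq\|_{L^2(S_k)}$, which is at most $C|S_k|^{1/2}\|q\|_{X_w}$. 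So $\|B\|_{X_w\to X_w}\le C_k:=C(V(k)+|\eta|)$, with constants depending on $g$ and $k$.

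Next I run the Picard iteration. The map $\Phi(q)(t)=e^{-iA t}q_0+\int_0^te^{-iA(t-s)}Bq(s)\dd s$ sends $C([-T,T];X_w)$ to itself. Continuity of the integral in $t$ follows from strong continuity of the multiplication group and boundedness of $B$. The map $\Phi$ is a contraction once $TC_k<1$. Because the linear estimate is independent of the data, the argument iterates to a global solution, and uniqueness holds on each interval. Gronwall applied to $\|q(t)\|_{X_w}\le\|q_0\|_{X_w}+C_k\int_0^{|t|}\|q(s)\|_{X_w}\dd s$ gives $e^{C_k|t|}\|q_0\|_{X_w}$. An equivalent route is to note that $-iA+B$ is a bounded perturbation of the generator of a $C_0$ group and apply the standard perturbation theorem.

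Finally, the exponential bound makes $\int_0^\infty e^{-\lambda t}q_k(t)\dd t$ converge absolutely as an $X_w$-valued Bochner integral for $\Re\lambda>C_k$. Differentiating under the integral sign, justified by dominated convergence with the extra factor $t$, gives analyticity.

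The main obstacle is the boundedness of $a_kW$ on $X_w$, namely the handling of the non-integrable tail of the Coulomb kernel. It is resolved by the $L^1$ embedding $X_w\hookrightarrow L^1$ together with the compact support of $a_k$. The only other point to check is that $\Sigma$ is finite and real, so that $A_{\eta,k}$ is a real measurable multiplier. This holds because $\Sigma=Wg$ is bounded by the same splitting argument, since $g\in L^1\cap L^\infty$.
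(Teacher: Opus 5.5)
Your proposal is correct and follows the paper's proof: the real multiplier group is isometric on $X_w$, the rank-one Hartree term is bounded because $\ellf$ is bounded on $X_w$, $a_kW$ is bounded thanks to the compact support $S_k$, and one concludes by contraction and iteration (you also spell out the Gronwall bound and the analyticity of the Laplace transform, which the paper leaves implicit). The only deviation is in bounding the exchange term: you split the kernel at $|p-q|=1$ and use Young's inequality plus the embedding $X_w\hookrightarrow L^1$, whereas the paper uses Hardy--Littlewood--Sobolev ($W:L^2\to L^6$) followed by H\"older on $S_k$; both are valid, yours is more elementary, and the paper's needs only $L^2$ control of $q$.
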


\begin{proof}
The multiplier group $e^{-iA_{\eta,k}t}$ is isometric on $X_w$ (the weight commutes with multiplication operators and $A_{\eta,k}$ is real). The map $q\mapsto a_kV(k)\ellf(q)$ is a bounded rank-one operator on $X_w$, since $\ellf$ is bounded on $X_w$ and $a_k\in C_c^{m+3}$. By the Hardy--Littlewood--Sobolev inequality \cite{Stein}, $W:L^2(\R^3)\to L^6(\R^3)$ is bounded (the kernel of \eqref{eq:W} is a constant multiple of the Riesz kernel $|x|^{-2}$), and hence $q\mapsto a_kWq$ maps $X_w\subset L^2$ boundedly into $L^2(S_k)\subset X_w$ by H\"older on the compact set $S_k$. The claim follows from the contraction mapping principle and iteration.
\end{proof}

\subsection{The Hartree plasmon input}\label{sec:input}
For $\eta=0$ the density obeys a scalar resolvent equation with the Penrose--Lindhard function
\begin{equation}\label{eq:D0}
D_0(\lambda,k)=1+iV(k)\,\ellf\Bigl(\frac{a_k}{\lambda+iA_{0,k}}\Bigr),
\end{equation}
which coincides, after the change of coordinates recorded in Appendix~\ref{app:conventions}, with the dielectric function of \cite{NY1}.

\begin{assumption}[Hartree plasmon input; \cite{NY1}]\label{ass:input}
There exist a survival threshold $\kappa_0>0$ and a radial function $\tau_0$, of class $C^{m+1}$ on compact subsets of $(0,\kappa_0)$, such that
\begin{equation}\label{eq:inputzeros}
D_0(\pm i\tau_0(|k|),k)=0,\qquad 0<|k|<\kappa_0,
\end{equation}
these are the only zeros of $D_0(\cdot,k)$ on $i\R$, they are simple, and
\begin{equation}\label{eq:inputbounds}
\tau_0(r)>2r\Upsilon+r^2,\qquad
c_0 r\le \tau_0'(r)\le C_0 r,\qquad
c_0\le\tau_0''(r)\le C_0,\qquad 0<r<\kappa_0,
\end{equation}
for some $c_0,C_0>0$; moreover $\tau_0(r)\to\omega_p:=\sqrt{8\pi\rho_g}>0$ as $r\to0^+$, where $\rho_g=(2\pi)^{-3}\int g$.
\end{assumption}

\begin{remark}\label{rem:inputjustified}
Assumption~\ref{ass:input} is not a new hypothesis: it reduces \emph{exactly} to \cite{NY1}. Setting $\mu:=g/(2\pi^2)$, the change of variables of Appendix~\ref{app:conventions} identifies \eqref{eq:D0} with the dielectric function of \cite{NY1} for the kernel $\widehat w(k)=|k|^{-2}$ and the equilibrium profile $\mu$; note $0\le\mu\le(2\pi^2)^{-1}\le1$, and under \textup{(G1)--(G2)} the profile $\mu$ satisfies the hypotheses of \cite[Section~1.4]{NY1}, the regularity being imposed there, as in \textup{(G1)}, on the energy-variable profile. Assumption~\ref{ass:input} is then the content of \cite[Theorems~1.1 and~2.6]{NY1} together with their proofs. Specifically, the existence, uniqueness on the axis, and the two-sided bounds are \cite[Theorem~2.6 and (2.24)]{NY1}; the strict separation $\tau_0>2r\Upsilon+r^2$ for $r<\kappa_0$ and the simplicity $\partial_\tau D_0(i\tau,k)>0$ at the zero are established inside the proof of \cite[Theorem~2.6]{NY1}; and no zeros exist in $\{\Re\lambda>0\}$ by \cite[Theorem~1.1]{NY1}. The numerical identities of \cite{NY1} transfer through $\rho_\mu=\int\mu=(2\pi^2)^{-1}(2\pi)^3\rho_g=4\pi\rho_g$: for instance $\tau_*(0)=\sqrt{2\rho_\mu}$ of \cite[(1.17)]{NY1} becomes $\tau_0(0^+)=\omega_p=\sqrt{8\pi\rho_g}$, consistent with the exact identity $\int_{\R}\varphi_g(u)\dd u=(2\pi)^3\rho_g=\pi^2\omega_p^2$ used in Section~\ref{sec:ward}.
\end{remark}

Throughout the paper we fix a compact band
\begin{equation}\label{eq:band}
I=[k_-,k_+]\Subset(0,\kappa_0),\qquad
\Omega_I=\{k\in\R^3:\ k_-\le|k|\le k_+\},
\end{equation}
and we set $\delta_I:=\min_{r\in I}\bigl\{\tau_0(r)-(2r\Upsilon+r^2)\bigr\}>0$.

\section{Main results}\label{sec:main}

Let $E_\eta$, $m_\eta$, and the exchange-resummed dielectric function $D_\eta(\lambda,k)$ be as constructed in Section~\ref{sec:schur}, and let $F_\eta(\tau,k)=D_\eta(i\tau,k)$ denote its (real-valued) restriction to the imaginary axis. Let $L_{0,*}=\tau_0(r)+A_{0,k}$ and $C_k=A_{X,k}-a_kW_R$ with $A_{X,k}(p)=\Sigma(p-\frac k2)-\Sigma(p+\frac k2)$, and write $L_{0,*}^{-1}a_k$ for the zero extension of $a_k/L_{0,*}$ off $S_k$ (Section~\ref{sec:analyticity}).

\begin{theorem}[Plasmons under Coulomb exchange on subthreshold bands]\label{thm:main}
Let $g$ satisfy \textup{(G1)--(G2)}, let Assumption~\textup{\ref{ass:input}} hold, and fix $I\Subset(0,\kappa_0)$ as in \eqref{eq:band}. Then there exist $\eta_0>0$, $r_0>0$, and $\eta_1\in(0,\eta_0]$ such that for all $|\eta|<\eta_0$, $k\in\Omega_I$, and $\sigma\in\{+,-\}$ the following hold.
\begin{itemize}
\item[(i)] \textup{(Persistence, uniqueness, pure imaginarity.)} $D_\eta(\cdot,k)$ has exactly one zero
\[
\lambda_{\sigma,\eta}(k)=\sigma\, i\,\tau_\eta(|k|)
\]
in the disk $U_{\sigma,k}=\{\lambda:|\lambda-\sigma i\tau_0(|k|)|<r_0\}$; it is simple, purely imaginary, and $\tau_\eta$ is real, radial, and of class $C^m$ on $I$.
\item[(ii)] \textup{(Real-analyticity in $\eta$; explicit expansion.)} For $|\eta|<\eta_1$,
\[
\tau_\eta(r)=\tau_0(r)+\sum_{n\ge1}\eta^n\,\tau_X^{(n)}(r),
\]
absolutely and uniformly on $I$, with $\tau_X^{(n)}\in C^{m-2}(I)$; the residues $r_{\sigma,\eta}(k)$ of $D_\eta^{-1}$ are real-analytic in $\eta$ as well. The first coefficient is $\tau_X=\tau_X^{\mathrm{self}}+\tau_X^{\mathrm{vertex}}$ with
\begin{equation}\label{eq:tauXsplit}
\tau_X^{\mathrm{self}}(r)=-\frac{\ellf\bigl(L_{0,*}^{-1}A_{X,k}L_{0,*}^{-1}a_k\bigr)}{\ellf\bigl(L_{0,*}^{-2}a_k\bigr)},
\qquad
\tau_X^{\mathrm{vertex}}(r)=+\frac{\ellf\bigl(L_{0,*}^{-1}a_kW_RL_{0,*}^{-1}a_k\bigr)}{\ellf\bigl(L_{0,*}^{-2}a_k\bigr)},
\end{equation}
and the second coefficient $\tau_X^{(2)}$ is given by \eqref{eq:tau2} below. Moreover
$\|\tau_\eta-\tau_0-\eta\tau_X\|_{C^0(I)}\le C_I|\eta|^2$ for $|\eta|\le\eta_1/2$.
\item[(iii)] \textup{(Exact plasmon modes; undamped collective oscillations.)} The function
\[
\varphi_{\sigma,\eta,k}:=-iV(k)\,E_\eta(\lambda_{\sigma,\eta},k)\,m_\eta(\lambda_{\sigma,\eta},k)\ \in\ H^1(\R^3),
\qquad \supp\varphi_{\sigma,\eta,k}\subset S_k,
\qquad \ellf(\varphi_{\sigma,\eta,k})=1,
\]
yields the exact solution $q_k(t)=e^{\sigma i\tau_\eta(|k|)t}\varphi_{\sigma,\eta,k}$ of \eqref{eq:fibereq}; conversely every exponential solution with frequency in $U_{\sigma,k}$ is a multiple of it. Thus $\pm i\tau_\eta(|k|)$ are eigenvalues of the closed operator $\sG_{\eta,k}$ of Section~\ref{sec:modes}, with one-dimensional eigenspaces, embedded in its essential spectrum $\sigma_{\mathrm{ess}}(\sG_{\eta,k})=i\R$; in particular the exact collective modes are undamped.
\item[(iv)] \textup{(Rank-one residues; sine-form Green function.)} Let $\eta$ be real. The reduced fiber resolvent $\sA_\eta^{-1}$ of \eqref{eq:Ainv} is meromorphic on $U_{\sigma,k}$ with the rank-one residue \eqref{eq:res37}. The function $D_\eta(\cdot,k)^{-1}-1$ belongs to the Hardy space $H^2$ of every half-plane $\{\Re\lambda>c\}$, $c>\Lambda_I$ \textup{(}Lemma~\ref{lem:halfplane}\textup{)}, so by the Paley--Wiener theorem it is the Laplace transform of a unique causal function; the resulting causal density Green function $\widehat G_\eta$ of Proposition~\ref{prop:green} decomposes as
\[
\widehat G_\eta(t,k)=\delta(t)-\frac{2}{\partial_\tau F_\eta(\tau_\eta(|k|),k)}\,\sin\bigl(\tau_\eta(|k|)t\bigr)\mathbf 1_{t\ge0}+\widehat G^{\mathrm{rem}}_\eta(t,k),
\]
where $\mathcal L[\widehat G^{\mathrm{rem}}_\eta]$ is the restriction of the \emph{unique meromorphic continuation} of $D_\eta^{-1}-1$ to the connected set $\Omega_k=\{\Re\lambda>0\}\cup U_{+,k}\cup U_{-,k}$ \textup{(}Lemma~\ref{lem:continuation}\textup{)} minus its two principal parts: it is holomorphic on $\{\Re\lambda>\Lambda_I\}\cup U_{+,k}\cup U_{-,k}$ and uniformly bounded on the two disks, and its only possible further singularities lie in the fixed bounded set $Q_k\subset\{0<\Re\lambda\le\Lambda_I\}$ of Lemma~\ref{lem:continuation}.
\item[(v)] \textup{(Dispersive and Strichartz estimates.)} For $\chi\in C_c^{m-1}(\Omega_I)$ let
\[
K_{\sigma,\eta}(t,x)=\frac{1}{(2\pi)^{3}}\int_{\R^3} e^{ik\cdot x+\sigma it\tau_\eta(|k|)}\,\chi(k)\,r_{\sigma,\eta}(k)\dd k .
\]
Then for $t\ge1$ and $2\le p\le\infty$,
\[
\|K_{\sigma,\eta}(t)*u\|_{L^p_x(\R^3)}\le C\,t^{-3(\frac12-\frac1p)}\|u\|_{L^{p'}_x(\R^3)},
\]
and for every $\widetilde\chi\in C_c^{m-1}(\Omega_I)$ and every admissible pair $\frac1q+\frac3{2p}=\frac34$, $q\ge2$ \textup{(}endpoint included\textup{)},
\[
\bigl\|e^{\sigma it\tau_\eta(|D|)}\widetilde\chi(D)f\bigr\|_{L^q_t(\R;L^p_x(\R^3))}\le C\|f\|_{L^2(\R^3)} .
\]
All constants are uniform in $|\eta|<\eta_0$ and $\sigma$.
\end{itemize}
\end{theorem}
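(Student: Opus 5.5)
The proof is organized as a perturbation argument around the Hartree zeros $\pm i\tau_0(|k|)$ of Assumption~\ref{ass:input}, carried out in the fiber variables of Lemma~\ref{lem:fiber}.

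\textbf{Construction of $D_\eta$.} First I would record the structural input from Section~\ref{sec:schur}. On $S_k$ the Coulomb exchange $a_kW$ is a compact, weakly singular operator with an explicit Schur bound. By the gap $\delta_I$, for $\lambda$ in a disk of radius $r_0<\delta_I/2$ about $\sigma i\tau_0$ the multiplier $\lambda+iA_{\eta,k}$ stays at distance $\gtrsim\delta_I$ from zero on $S_k$. This holds uniformly for $|\eta|$ small, since $A_{\eta,k}-A_{0,k}=\eta A_{X,k}$ is bounded on $S_k$, $\Sigma$ being the Coulomb potential of a compactly supported bounded function. Hence $E_\eta=(1+i\eta(\lambda+iA_{\eta,k})^{-1}a_kW)^{-1}$ exists by a Neumann series, is holomorphic in $\lambda$, and is analytic in $\eta$. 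We then set $D_\eta=1+iV(k)\ellf(E_\eta m_\eta)$ with $m_\eta=a_k/(\lambda+iA_{\eta,k})$, so that $D_\eta\to D_0$ uniformly on $\overline{U_{\sigma,k}}$.

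\textbf{(i) Persistence and pure imaginarity.} Since $\pm i\tau_0$ are simple zeros with $\partial_\tau F_0\ge c>0$ uniformly on $\Omega_I$ (by compactness and Assumption~\ref{ass:input}), Rouch\'e on $\partial U_{\sigma,k}$ gives exactly one zero for $|\eta|<\eta_0$. For pure imaginarity, I would check that on $i\R$ the multiplier $\lambda+iA_{\eta,k}=i(\tau+A_{\eta,k})$ is purely imaginary and $W$ is real and symmetric. Then $F_\eta(\tau,k)$ is real, the real implicit function theorem produces a real zero $\tau_\eta$, and uniqueness in the disk identifies it with the Rouch\'e zero. Radiality follows from rotation invariance. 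The $\sigma=-$ case follows from the parity $p\mapsto -p$, which gives $F_\eta(-\tau,k)=F_\eta(\tau,k)$ up to the symmetry $a_{k}(-p)=-a_k(p)$, $A(-p)=-A(p)$. The $C^m$ regularity follows from the implicit function theorem with the $C^{m+3}$ regularity of $g$, losing derivatives only through differentiating $1/L$.

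\textbf{(ii) Analyticity and the coefficients.} On $i\R$ the operator $L_\eta=L_0+\eta C_k$ is affine in $\eta$. I would write $\tau_\eta$ via the argument principle,
\[
\tau_\eta=\frac{1}{2\pi i}\oint \lambda\,\partial_\lambda D_\eta/D_\eta\,\dd\lambda\cdot(-i),
\]
with $D_\eta$ a convergent geometric series in $\eta$ and uniform radius. Analyticity of $\tau_\eta$ and of the residues follows, and Cauchy estimates yield the $C_I|\eta|^2$ bound. The first coefficient comes from differentiating $F_\eta(\tau_\eta,k)=0$, namely $\tau_X=-\partial_\eta F/\partial_\tau F$ at $\eta=0$. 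Here $\partial_\tau F_0\propto\ellf(L_{0,*}^{-2}a_k)$, and $\partial_\eta F$ splits into the $A_{X,k}$ term (self) and the $a_kW_R$ term (vertex), giving \eqref{eq:tauXsplit}.

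\textbf{(iii) Exact modes.} For (iii) I would insert $q=e^{\lambda t}\varphi$ into \eqref{eq:fibereq}, solve the resulting equation for $\varphi$ given $\rho=\ellf(\varphi)$, and get $\varphi=-iV(k)\rho E_\eta m_\eta$. Consistency forces $\rho D_\eta(\lambda)=0$, which gives both directions and the one-dimensionality. The support and $H^1$ regularity follow from $a_k\in C_c$ together with $E_\eta$ preserving $H^1(S_k)$. The essential spectrum $i\R$ comes from the multiplier $-iA_{\eta,k}$ plus compact perturbations, by Weyl's theorem.

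\textbf{(iv) Green function.} For (iv) I would decay $D_\eta^{-1}-1=O(|\lambda|^{-1})$ in $\Re\lambda>c$ to obtain the Hardy space bound, use Paley--Wiener, and subtract the principal parts $r_\pm/(\lambda\mp i\tau_\eta)$. Their inverse Laplace transform is the sine term, since the residues are $\pm i/\partial_\tau F$ by the realness of $F$.

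\textbf{(v) Dispersive and Strichartz estimates.} For (v), the phase $x\cdot k+\sigma t\tau_\eta(|k|)$ has a Hessian with eigenvalues $\tau_\eta''$ and $\tau_\eta'/r$, both bounded below uniformly by \eqref{eq:inputbounds} plus the $C^2$-smallness of $\tau_\eta-\tau_0$. Stationary phase then gives $t^{-3/2}$. Interpolating with $L^2$ and applying Keel--Tao gives the endpoint Strichartz estimates.

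\textbf{Main obstacle.} The hard step is controlling the dynamic exchange operator uniformly. This requires the Schur bound for $W$ weighted by $1/L$ on $S_k$, and it requires that differentiating in $\tau$ and $k$ (needed for $C^m$ and for the Hessian bounds) does not destroy compactness when $|p-q|^{-2}$ meets the boundary of $S_k$. I would handle this by differentiating only the smooth factors $a_k$ and $1/L$, never the kernel.
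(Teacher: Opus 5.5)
Your overall architecture matches the paper's: Neumann resummation of the exchange block and the Schur complement $D_\eta$; Rouch\'e, the real restriction $F_\eta$ and the implicit function theorem; parity; the contour-integral representation in $\eta$; implicit differentiation; the exponential ansatz; Paley--Wiener; stationary phase with Keel--Tao. Your uniform choice of the Rouch\'e disks, from a uniform lower bound on $|\partial_\lambda D_0|$ plus a Taylor estimate, is a fine substitute for Appendix~\ref{app:disks}.

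\textbf{First gap: part (iv).} Paley--Wiener gives a causal $g_\eta$ whose Laplace transform equals $D_\eta^{-1}-1$ only on $\{\Re\lambda>c\}$ with $c$ large. There $\|m_\eta\|_\infty\lesssim1/\Re\lambda$, so the exchange Neumann series converges. The disks $U_{\pm,k}$ sit on the imaginary axis. In the intervening strip $0<\Re\lambda\le\Lambda_I$, $|\Im\lambda|\lesssim M_I$, you only have $\|m_\eta\|_\infty\le\|a_k\|_\infty/\Re\lambda$. Neither Neumann series converges there, and $(\Id-i\eta K_\eta)^{-1}$ may fail to exist. ``Subtract the principal parts'' therefore presupposes that the half-plane function and the locally constructed $D_\eta^{-1}$ on the disks are one meromorphic function, which is exactly what must be proved. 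The paper supplies this in Lemma~\ref{lem:continuation}:
\begin{itemize}
\item[$\bullet$] $m_\eta$ is given by a single formula holomorphic on the connected set $\Omega_k=\{\Re\lambda>0\}\cup U_{+,k}\cup U_{-,k}$, using $|\lambda+iA_{\eta,k}|\ge\Re\lambda$ on the half-plane and the gap on the disks;
\item[$\bullet$] $K_\eta=m_\eta W_R$ is compact-valued;
\item[$\bullet$] $\Id-i\eta K_\eta$ is invertible on $\mathcal R_k$.
\end{itemize}
The analytic Fredholm theorem then gives a finitely meromorphic $E_\eta$ on $\Omega_k$ with singularities confined to $Q_k$, and the identity theorem identifies the principal parts. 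The same argument yields the statement's claims that the remainder transform is holomorphic on $\{\Re\lambda>\Lambda_I\}\cup U_{+,k}\cup U_{-,k}$ and singular at most in $Q_k$; your sketch does not address these.

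\textbf{Second gap: the converse in part (iii).} ``Solve the resulting equation for $\varphi$'' requires dividing by $\lambda+iA_{\eta,k}$, which is legitimate only on $S_k$. Off $S_k$, equation \eqref{eq:eigen} reads $(\lambda+iA_{\eta,k}(p))\varphi(p)=0$. For $\lambda=i\tau$, which is exactly where the eigenvalues lie, this factor vanishes on the resonant set $\{\tau+A_{\eta,k}=0\}$. That set is nonempty and disjoint from $S_k$, which is why the eigenvalue is embedded. To get ``every exponential solution is a multiple of $\varphi_{\sigma,\eta,k}$'' and one-dimensional eigenspaces, you must first show that an eigenfunction in $D(\sG_{\eta,k})$ vanishes a.e.\ off $S_k$. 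Only then does $a_kW\varphi=a_kW_R\varphi$ see just $\varphi|_{S_k}$. The paper gets this from $|\nabla A_{\eta,k}|\ge2k_--C|\eta|>0$ (Lemma~\ref{lem:sigma}), which makes the resonant set a null hypersurface. As written, your argument only classifies eigenfunctions already assumed to be supported in $S_k$.

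\textbf{Smaller slips.}
\begin{itemize}
\item[$\bullet$] From \eqref{eq:laplace}, the resummed block is $E_\eta=(\Id-i\eta m_\eta W_R)^{-1}$, not $(\Id+i\eta m_\eta W)^{-1}$. Carried through, your sign flips $\tau_X^{\mathrm{vertex}}$, which matters for the cancellation.
\item[$\bullet$] ``$E_\eta$ preserves $H^1(S_k)$'' needs the Neumann series to converge in $H^1$. The paper avoids this by writing $\varphi=-iV(k)(m_\eta+i\eta m_\eta W_R E_\eta m_\eta)$ and using $W_R:L^2\to\dot H^1_{\mathrm{loc}}$.
\item[$\bullet$] The second-order formula \eqref{eq:tau2} is left underived, though it is a routine expansion.
\end{itemize}
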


\begin{theorem}[First-order Ward-type cancellation of the plasma gap]\label{thm:ward}
Let $g$ satisfy \textup{(G1)--(G2)} and Assumption~\textup{\ref{ass:input}}, and fix $k_+<\kappa_0$. Then the first-order coefficient $\tau_X$ of Theorem~\textup{\ref{thm:main}\,(ii)} extends to $(0,k_+]$ and satisfies
\[
|\tau_X(r)|\ \le\ C(k_+)\,r^2,\qquad 0<r\le k_+ .
\]
Moreover the limit $\beta_X=\lim_{r\to0^+}\tau_X(r)/r^2$ exists and equals
\begin{equation}\label{eq:betaX}
\begin{split}
\beta_X&=\frac{1}{4\pi^4\,\omega_p^3}\iint_{\R^3\times\R^3}\frac{\partial_1g(p)\,\partial_1g(q)\,(p_1-q_1)^2}{|p-q|^2}\dd p\dd q\\
&=-\frac{1}{4\pi^4\,\omega_p^3}\iint_{\R^3\times\R^3}\frac{\partial_1g(p)\,\partial_1g(q)\,|p_\perp-q_\perp|^2}{|p-q|^2}\dd p\dd q,
\end{split}
\end{equation}
where $p=(p_1,p_\perp)$. In particular $\partial_\eta\tau_\eta(r)\big|_{\eta=0}=\tau_X(r)\to0$ as $r\to0^+$, at the quadratic rate $\beta_Xr^2$: the plasma gap is invariant under the Coulomb exchange at first order in the coupling. This is a statement about the first-order coefficient at fixed $r$; a joint expansion uniform as $(r,\eta)\to(0,0)$ would require resolvent control uniform down to $k=0$, which remains open \textup{(}Section~\ref{sec:conclusion}\textup{)}.
\end{theorem}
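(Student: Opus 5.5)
The plan rests on the structural identity of Lemma~\ref{lem:AXident}. Since $\Sigma=Wg$ and $W$ is a convolution operator, hence commutes with translations, we have
\[
A_{X,k}(p)=\Sigma\bigl(p-\tfrac k2\bigr)-\Sigma\bigl(p+\tfrac k2\bigr)=\bigl(W a_k\bigr)(p).
\]
By rotation invariance I fix $k=re_1$ and write $L(p)=L_{0,*}(p)=\tau_0(r)+2rp_1$ and $a=a_k$. On $S_k$ Assumption~\ref{ass:input} gives $L\ge\tau_0(r)-2r\Upsilon-r^2>0$, and the bounds in \eqref{eq:inputbounds} make this lower bound uniform on $(0,k_+]$.

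\textbf{Step 1 (single symmetric integral).} Insert $A_{X,k}=Wa$ into \eqref{eq:tauXsplit}. The numerators become, with $c=(2\pi)^{-3}(2\pi^2)^{-1}$,
\[
c\iint\frac{a(p)a(q)}{L(p)^2|p-q|^2}\dd p\dd q
\qquad\text{and}\qquad
c\iint\frac{a(p)a(q)}{L(p)L(q)|p-q|^2}\dd p\dd q ,
\]
for the self-energy and vertex parts respectively. Symmetrizing the first integral in $p\leftrightarrow q$ and subtracting gives
\[
\tau_X(r)=-\frac{c}{2\,\ellf(L^{-2}a)}\iint\frac{a(p)a(q)\,\bigl(2r(p_1-q_1)\bigr)^2}{L(p)^2L(q)^2\,|p-q|^2}\dd p\dd q .
\]
This used $L(q)-L(p)=2r(q_1-p_1)$. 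The integral is absolutely convergent, since the kernel $|p-q|^{-2}$ is locally integrable and $a$ is bounded with compact support. This formula defines $\tau_X$ on all of $(0,k_+]$ whenever the denominator does not vanish, which gives the claimed extension.

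\textbf{Step 2 (numerator is $O(r^4)$).} Write $a(p)=-r\,\partial_1g(p)+O(r^3)$, uniformly in $C^0$ with uniformly compact support (Taylor expansion, using $g\in C^{m+3}$). Together with $L^{-2}=\tau_0^{-2}+O(r)$, this shows the double integral equals
\[
4r^4\,\tau_0(r)^{-4}\iint\frac{\partial_1g(p)\,\partial_1g(q)\,(p_1-q_1)^2}{|p-q|^2}\dd p\dd q+o(r^4).
\]
Dominated convergence applies because $(p_1-q_1)^2/|p-q|^2\le1$, so the integrand is bounded on a fixed compact set.

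\textbf{Step 3 (denominator is of exact order $r^2$).} Expand $L^{-2}=\tau_0^{-2}\bigl(1-4rp_1/\tau_0+O(r^2)\bigr)$. The leading term $\ellf(a)/\tau_0^2$ vanishes identically, because the two translates of $g$ in $a$ have equal integrals. The next term gives
\[
\ellf(L^{-2}a)=\frac{4r^2}{\tau_0^3}\,\ellf(p_1\partial_1g)+O(r^3)=-\frac{4r^2}{\tau_0^3}\,\rho_g+O(r^3),
\]
using $-\int p_1\partial_1 g=\int g$. The remainder must be tracked to the next order, which is possible because $a$ is an odd function of $k$, so all even-in-$r$ contributions at order $r$ cancel. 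On $[\epsilon,k_+]$, nonvanishing follows from simplicity: $\ellf(L^{-2}a)$ is a nonzero multiple of $V(k)^{-1}\partial_\tau F_0$ at $\tau_0$, and $\partial_\tau F_0\neq0$ by Assumption~\ref{ass:input}. Continuity in $r$ then yields $|\ellf(L^{-2}a)|\ge c\,r^2$ on $(0,k_+]$. Dividing Step~2 by Step~3 gives $|\tau_X(r)|\le Cr^2$.

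\textbf{Step 4 (the limit $\beta_X$).} Dividing the leading terms and using $\tau_0(0^+)=\omega_p$ together with $8\pi\rho_g=\omega_p^2$ gives the first expression in \eqref{eq:betaX}. Tracking the constants $c$, $(2\pi)^{-3}$ and $\rho_g$ should produce $1/(4\pi^4\omega_p^3)$ with a positive sign, since the minus sign in Step~1 is absorbed by the sign of $\ellf(L^{-2}a)$. For the second expression, write $(p_1-q_1)^2=|p-q|^2-|p_\perp-q_\perp|^2$; the first piece contributes $\bigl(\int\partial_1 g\bigr)^2=0$.

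\textbf{Main obstacle.} I expect the main difficulty to be the uniformity in Step~3. The denominator degenerates like $r^2$, so every error term in the expansions must be $o(r^2)$ uniformly. This requires controlling $\tau_0(r)-\omega_p$, which is $O(r^2)$ by \eqref{eq:inputbounds}, and proving the matching lower bound $|\ellf(L^{-2}a)|\ge c\,r^2$ near $r=0$. I would first handle this with an explicit Taylor remainder in $r$ inside $\ellf$, and then patch the small-$r$ region and the region away from $0$ together by continuity and the simplicity of the zero.
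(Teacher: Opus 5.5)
Your proof is correct and follows the paper's argument: the identity $A_{X,k}=Wa_k$, symmetrization to extract the factor $(2r(p_1-q_1))^2$ (which tames the Coulomb kernel and makes the numerator $O(r^4)$), and dominated convergence for $\beta_X$. The one difference is the denominator: you use a Taylor expansion near $r=0$ patched with simplicity of the Hartree zero and compactness on $[\epsilon,k_+]$, whereas the paper reads off $c_1r^2\le|\ellf(L^{-2}a_k)|\le c_2r^2$ on all of $(0,k_+]$ directly from the sign-definite marginal formula \eqref{eq:Denexact} (Proposition~\ref{prop:signs}\,(a)), which removes the uniformity issue you flagged as the main obstacle.
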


\begin{remark}[The dynamic vertex is not a correction term]\label{rem:meaning}
Theorem~\ref{thm:main} is \emph{not} obtained by inserting the exchange into the equilibrium dispersion $\epsilon_\eta$ alone. The dynamic exchange block $-\eta a_kW$ of \eqref{eq:fibereq} is inverted first, and the Coulomb--Hartree density channel is eliminated afterwards by a Schur complement; the first-order shift then necessarily carries the two terms of \eqref{eq:tauXsplit}. Keeping only $\tau_X^{\mathrm{self}}$ --- i.e., correcting the dispersion relation but discarding the vertex --- is inconsistent as a first-order theory: by Theorem~\ref{thm:ward} the two terms cancel to leading order at small $k$ (and, numerically, to within $1.2\%$ on the computed range $[0.02\kappa_0,0.95\kappa_0]$, with $\tau_X^{\mathrm{self}}$ alone off even in sign; see Section~\ref{sec:numerics}). This also contrasts with the linear theory of \cite{NY2}, where the dynamic exchange is kept outside the dielectric function and controlled perturbatively in the nonlinear iteration --- an option available there because, the direct interaction being short range, no plasmon poles exist.
\end{remark}

\begin{remark}[Scope]\label{rem:scope}
The constants $\eta_0,\eta_1$ degenerate as $k_-\to0$ or $k_+\to\kappa_0$; the theorem is a statement about fixed compact subthreshold bands. See Sections~\ref{sec:notproved} and~\ref{sec:conclusion} for the precise list of open endpoints.
\end{remark}

\section{The Coulomb exchange operator on the band}\label{sec:exchangeop}

\subsection{Regularity of the self-energy}
\begin{lemma}\label{lem:sigma}
Under \textup{(G1)}, $\Sigma\in C^{m+3}(\R^3)$ is radial and $\|\partial^\alpha\Sigma\|_{L^\infty}\le C_{\alpha,g}$ for $|\alpha|\le m+3$. Consequently, with $A_{X,k}(p)=\Sigma(p-\frac k2)-\Sigma(p+\frac k2)$ one has $A_{\eta,k}=A_{0,k}+\eta A_{X,k}$ and
$\|A_{\eta,k}-A_{0,k}\|_{C^{m+2}}\le C|\eta|$, uniformly for $k\in\Omega_I$. Moreover $|\nabla A_{\eta,k}|\ge 2k_- - C|\eta|$ on $\R^3$.
\end{lemma}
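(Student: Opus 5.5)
The approach is to write $\Sigma=W g$, i.e.\ $\Sigma=c\,|x|^{-2}*g$ with $c=(2\pi^2)^{-1}$, and to put every derivative on $g$. Since $|x|^{-2}$ is locally integrable on $\R^3$ and $g\in C^{m+3}_c$, differentiation under the integral gives
\[
\partial^\alpha\Sigma=\frac{1}{2\pi^2}\int_{\R^3}\frac{(\partial^\alpha g)(q)}{|p-q|^2}\dd q,\qquad |\alpha|\le m+3 .
\]
This is justified by dominated convergence. Difference quotients of $\partial^{\alpha'}g$ are uniformly bounded and supported in a fixed ball, so the integrand is dominated by $C\,\mathbf 1_{B}(q)\,|p-q|^{-2}$, which is integrable. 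Continuity of $\partial^\alpha\Sigma$ follows the same way, because translation is continuous in $L^1_{\mathrm{loc}}$ applied to the kernel.

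For the uniform bound, split the kernel as $|x|^{-2}\mathbf 1_{|x|\le1}+|x|^{-2}\mathbf 1_{|x|>1}$. The first piece lies in $L^1$ and the second in $L^\infty$. Young's inequality then gives
\[
\|\partial^\alpha\Sigma\|_{L^\infty}\le C\bigl(\|\partial^\alpha g\|_{L^\infty}+\|\partial^\alpha g\|_{L^1}\bigr)\le C_{\alpha,g},
\]
since $\partial^\alpha g$ is bounded with compact support. Radiality is immediate: $g$ and the kernel are invariant under rotations $R$, so the substitution $q\mapsto Rq$ gives $\Sigma(Rp)=\Sigma(p)$.

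Next, the identity $A_{\eta,k}=A_{0,k}+\eta A_{X,k}$ follows directly from $\epsilon_\eta=|p|^2-\eta\Sigma$ and the definition \eqref{eq:aA}. The mixed terms cancel, giving $A_{0,k}=2p\cdot k$. Then
\[
\|A_{\eta,k}-A_{0,k}\|_{C^{m+2}}=|\eta|\,\|A_{X,k}\|_{C^{m+2}}\le 2|\eta|\,\|\Sigma\|_{C^{m+2}}\le C|\eta|,
\]
since translation preserves sup norms of derivatives. This bound is independent of $k$, so it is uniform on $\Omega_I$.

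For the gradient bound, note that $\nabla A_{0,k}=2k$, so $|\nabla A_{0,k}|=2|k|\ge 2k_-$ on $\Omega_I$. Also $|\nabla A_{X,k}|\le 2\|\nabla\Sigma\|_{L^\infty}$. The triangle inequality then gives $|\nabla A_{\eta,k}|\ge 2k_--C|\eta|$ on all of $\R^3$.

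The only delicate point is differentiating under the integral against the singular kernel up to order $m+3$. This is handled by placing all derivatives on $g$ and using local integrability of $|x|^{-2}$ in three dimensions. No estimate on the kernel's derivatives is needed, since those would not be integrable at high order.
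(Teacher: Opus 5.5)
Your proof is correct and follows essentially the same route as the paper: you put all derivatives on $g$ via $\partial^\alpha\Sigma=(2\pi)^{-3}V*\partial^\alpha g$, split the kernel at $|p-q|=1$ to get the bound by $\|\partial^\alpha g\|_{L^\infty}+\|\partial^\alpha g\|_{L^1}$, use rotation invariance for radiality, and read off the remaining claims from $A_{\eta,k}=A_{0,k}+\eta A_{X,k}$ and $\nabla A_{0,k}=2k$. You also justify differentiation under the integral and continuity of $\partial^\alpha\Sigma$, which the paper leaves implicit.
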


\begin{proof}
$V(p)=4\pi|p|^{-2}$ is locally integrable and tempered, so $\partial^\alpha\Sigma=(2\pi)^{-3}V*\partial^\alpha g$. Splitting the convolution at $|p-q|=1$,
$\int_{|p-q|<1}|p-q|^{-2}\dd q=4\pi$ and $|p-q|^{-2}\le1$ otherwise, whence
$|(V*\partial^\alpha g)(p)|\le C(\|\partial^\alpha g\|_\infty+\|\partial^\alpha g\|_{L^1})$. Radiality follows from rotation invariance. The last two claims follow since $\nabla A_{0,k}=2k$.
\end{proof}

\subsection{Boundedness, compactness, and localization}
Fix once and for all
\begin{equation}\label{eq:R}
R:=\Upsilon+\tfrac{k_+}2+1,\qquad X:=L^2(B_R),
\end{equation}
so that $S_k\subset B_{R-1}$ for all $k\in\Omega_I$, and let $W_R$ denote the operator \eqref{eq:W} with the integration restricted to $B_R$.

\begin{lemma}[Schur bound and compactness]\label{lem:WR}
$W_R\in\mathcal B(X)$ with $\|W_R\|_{X\to X}\le CR$; $W_R$ is compact, maps real functions to real functions, and commutes with the parity $(Uh)(p)=h(-p)$. Moreover
\begin{equation}\label{eq:tail}
\bigl\|\mathbf 1_{S_k}\,W\,\mathbf 1_{\{|q|>R\}}\bigr\|_{L^2(\R^3)\to L^2(\R^3)}\le CR^{-1/2}.
\end{equation}
\end{lemma}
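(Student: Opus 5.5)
The plan is to treat $W_R$ as an integral operator on $X=L^2(B_R)$ with kernel $K(p,q)=\frac{1}{2\pi^2}|p-q|^{-2}\mathbf 1_{B_R}(p)\mathbf 1_{B_R}(q)$, and to obtain each claim from the Schur test plus elementary properties of this kernel.

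\textbf{Boundedness.} The kernel is symmetric and nonnegative, so the Schur test with the constant weight gives
\[
\|W_R\|_{X\to X}\le\sup_{p\in B_R}\frac{1}{2\pi^2}\int_{B_R}\frac{\dd q}{|p-q|^2}.
\]
Because $|p-q|^{-2}$ is radially decreasing about $p$, the integral over $B_R$ is at most the integral over the ball of the same volume centred at $p$. That integral equals $4\pi R$, which yields the bound $CR$.

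\textbf{Compactness.} I would truncate the singularity by setting $K_\varepsilon=K\,\mathbf 1_{\{|p-q|>\varepsilon\}}$. This kernel is bounded on the bounded set $B_R\times B_R$, hence lies in $L^2(B_R\times B_R)$, so the corresponding operator is Hilbert--Schmidt. The Schur test applied to $K-K_\varepsilon$ gives
\[
\|W_R-W_{R,\varepsilon}\|\le\sup_p\frac{1}{2\pi^2}\int_{|p-q|<\varepsilon}|p-q|^{-2}\dd q=\frac{2\varepsilon}{\pi}\to0 .
\]
Thus $W_R$ is a norm limit of compact operators and is compact.

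\textbf{Reality and parity.} The kernel is real, so $W_R$ maps real functions to real functions. The kernel also satisfies $K(-p,-q)=K(p,q)$ and $B_R$ is symmetric, so a change of variables $q\mapsto-q$ shows $UW_R=W_RU$.

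\textbf{Tail bound \eqref{eq:tail}.} This is the step I expect to need the most care, because the kernel here is not symmetric in the two cutoffs. If $p\in S_k\subset B_{R-1}$ and $|q|>R$, then $|p-q|\ge|q|-|p|\ge1$, and also $|p-q|\ge|q|-(R-1)$. I would avoid a one-weight Schur test, since the $q$-integrals diverge at infinity, and use a Hilbert--Schmidt bound instead:
\[
\int_{S_k}\int_{|q|>R}\frac{\dd q\,\dd p}{|p-q|^4}\le|S_k|\sup_{p\in B_{R-1}}\int_{|q-p|\ge1,\ |q|>R}\frac{\dd q}{|p-q|^4}.
\]
The inner integral is finite but only $O(1)$: the region near $|q|=R$ still contributes at distance about $1$, over an area about $R^2$ of the sphere. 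Since $|S_k|\sim R^3$, this naive estimate gives $O(R^{3/2})$, not decay. To get $R^{-1/2}$ I would localize the $p$-support instead of using $R$ only through $S_k\subset B_{R-1}$: with $R$ fixed by \eqref{eq:R}, $S_k\subset B_{\Upsilon+k_+/2}$ and $\operatorname{dist}(S_k,\{|q|>R\})\ge1$.

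For a general $R$ in \eqref{eq:tail}, with the $p$-support confined to a fixed ball $B_{\rho}$, $\rho=\Upsilon+k_+/2$, one has $|p-q|\ge|q|-\rho\gtrsim|q|$ for $|q|>R\ge2\rho$. The Hilbert--Schmidt norm squared is then at most
\[
C\rho^3\int_{|q|>R}|q|^{-4}\dd q\lesssim R^{-1},
\]
which gives $CR^{-1/2}$ with a constant depending only on $\Upsilon$ and $k_+$.
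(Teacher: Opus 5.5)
Your proposal is correct and follows the paper's proof essentially step for step. Boundedness comes from the constant-weight Schur test, and compactness from truncating the kernel at $|p-q|=\varepsilon$ into a Hilbert--Schmidt part plus a part of Schur norm $O(\varepsilon)$. The symmetries come from reality and evenness of the kernel, and the tail from a Cauchy--Schwarz (equivalently Hilbert--Schmidt) bound using $|p-q|\gtrsim|q|$.

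Your localization $S_k\subset B_{\Upsilon+k_+/2}$ is a mild sharpening that makes the $R^{-1/2}$ rate genuine. The paper's comparison $|q|-(R-1)\gtrsim\langle q\rangle$ carries an $R$-dependent constant, which is harmless there because $R$ is fixed by \eqref{eq:R}. For that fixed $R=\rho+1$, with $\rho=\Upsilon+k_+/2$, one has $|q|-\rho\ge|q|/R$ whenever $|q|>R$. This closes the case $R<2\rho$, which your general-$R$ argument leaves implicit.
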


\begin{proof}
Schur's test with the constant weight: $\sup_{p\in B_R}\int_{B_R}4\pi|p-q|^{-2}\dd q\le C\int_0^{2R}r^{-2}r^2\dd r\le CR$, and symmetrically in $q$. For compactness, split the kernel at $|p-q|=\delta$: the far part is square integrable on $B_R\times B_R$ (hence Hilbert--Schmidt), while the near part has Schur norm $O(\delta)$; thus $W_R$ is a norm limit of compact operators. (Note that $|p-q|^{-4}$ is \emph{not} integrable across the diagonal in $\R^3\times\R^3$, so the splitting is necessary.) Reality and parity follow from $V(p-q)=\overline{V(p-q)}=V((-p)-(-q))$. For \eqref{eq:tail}, if $p\in S_k$ and $|q|>R$ then $|p-q|\ge|q|-|p|\ge|q|-(R-1)\gtrsim\langle q\rangle$, so the tail operator has kernel dominated by $C|q|^{-2}$ and
$\|\mathbf 1_{S_k}W\mathbf 1_{|q|>R}h\|_{L^\infty}\le C\bigl(\int_{|q|>R}|q|^{-4}\dd q\bigr)^{1/2}\|h\|_{L^2}\le CR^{-1/2}\|h\|_{L^2}$, and $S_k$ has bounded measure.
\end{proof}

\begin{remark}[Reduction to the ball]\label{rem:reduction}
The pole analysis below closes on $X=L^2(B_R)$ for the following reason. Off $S_k$ the equation \eqref{eq:fibereq} is the free flow $i\partial_tq=A_{\eta,k}q$, which is solved explicitly; the exterior component feeds back into the equation on $S_k$ only through the two \emph{known source terms} $a_kV(k)\ellf(q^{\mathrm{out}}(t))$ and $\eta a_kWq^{\mathrm{out}}(t)$, the second of which is controlled by \eqref{eq:tail} and by Lemma~\ref{lem:WR} on $B_R\setminus S_k$. Neither term affects the homogeneous (pole) problem: an exponential solution supported where $a_k=0$ satisfies $(\lambda+iA_{\eta,k})q=0$ and hence vanishes (see the proof of Theorem~\ref{thm:main}\,(iii) in Section~\ref{sec:modes}). The precise class of initial data for which the density resolvent is meromorphic across the axis is discussed in Remark~\ref{rem:generaldata}.
\end{remark}

\subsection{Uniform gap}
\begin{lemma}[Uniform gap on the band]\label{lem:gap}
There exist $c_I>0$, $r_I>0$, and $\eta_I>0$ such that for all $k\in\Omega_I$, $\sigma=\pm$, $|\eta|\le\eta_I$ \textup{(}$\eta$ possibly complex\textup{)}, and $|\lambda-\sigma i\tau_0(|k|)|\le r_I$,
\begin{equation}\label{eq:gap}
\inf_{p\in S_k}\bigl|\lambda+iA_{\eta,k}(p)\bigr|\ \ge\ c_I .
\end{equation}
\end{lemma}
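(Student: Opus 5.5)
We argue by a triangle inequality, first at $\eta=0$ on the imaginary axis and then absorbing the perturbations. For $p\in S_k\subset\{|p|\le\Upsilon+\frac{|k|}2\}$ we have $|A_{0,k}(p)|=|2p\cdot k|\le 2|k|(\Upsilon+\frac{|k|}{2})=2|k|\Upsilon+|k|^2$. For $\lambda=\sigma i\tau_0(|k|)$ exactly, the real-valuedness of $A_{0,k}$ gives
\[
\bigl|\sigma i\tau_0(|k|)+iA_{0,k}(p)\bigr|=\bigl|\tau_0(|k|)+\sigma A_{0,k}(p)\bigr|\ \ge\ \tau_0(|k|)-\bigl(2|k|\Upsilon+|k|^2\bigr)\ \ge\ \delta_I ,
\]
by the strict separation $\tau_0(r)>2r\Upsilon+r^2$ of Assumption~\ref{ass:input}. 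Here $\delta_I>0$ because $r\mapsto\tau_0(r)-(2r\Upsilon+r^2)$ is continuous and positive on the compact band $I$.

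We then perturb in $\eta$ and $\lambda$. By Lemma~\ref{lem:sigma}, $A_{X,k}$ is bounded in $L^\infty$ uniformly in $k\in\Omega_I$: indeed $|A_{X,k}|\le 2\|\Sigma\|_{L^\infty}=:M$. This bound uses only the boundedness of $\Sigma$, so it holds equally for complex $\eta$, since $A_{\eta,k}=A_{0,k}+\eta A_{X,k}$ is affine in $\eta$. Hence for $p\in S_k$,
\[
\bigl|\lambda+iA_{\eta,k}(p)\bigr|\ \ge\ \bigl|\sigma i\tau_0(|k|)+iA_{0,k}(p)\bigr|-\bigl|\lambda-\sigma i\tau_0(|k|)\bigr|-|\eta|\,|A_{X,k}(p)|\ \ge\ \delta_I-r_I-M\eta_I .
\]
We choose $r_I=\delta_I/4$ and $\eta_I=\delta_I/(4M)$ (or $\eta_I$ arbitrary if $M=0$), and obtain \eqref{eq:gap} with $c_I=\delta_I/2$.

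No step is a real obstacle. The only points to check are that $\delta_I$ is attained as a positive minimum, which follows from the continuity of $\tau_0$ on $I$ (it is $C^{m+1}$ there), and that the bound on $A_{X,k}$ is uniform in $k$ and insensitive to complex $\eta$. Both follow directly from Assumption~\ref{ass:input} and Lemma~\ref{lem:sigma}.
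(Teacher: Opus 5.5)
Your proof is correct and follows essentially the same route as the paper. Both bound $|A_{0,k}|\le 2|k|\Upsilon+|k|^2$ on $S_k$, use the uniform separation $\delta_I$ from Assumption~\ref{ass:input}, and absorb the $\lambda$- and (possibly complex) $\eta$-perturbations by the triangle inequality via the $k$-uniform bound $\|A_{X,k}\|_{L^\infty}\le 2\|\Sigma\|_{L^\infty}$, which gives $c_I=\delta_I/2$; your only difference is making the paper's constant $C$ explicit.
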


\begin{proof}
If $a_k(p)\ne0$ then $p\mp\frac k2\in\supp g$ for one of the signs, so $|p|\le\Upsilon+\frac{|k|}2$ and $|A_{0,k}(p)|=|2p\cdot k|\le2|k|\Upsilon+|k|^2$. By \eqref{eq:band}, $\tau_0(|k|)-|A_{0,k}(p)|\ge\delta_I$ on $S_k$. By Lemma~\ref{lem:sigma}, $\|A_{\eta,k}-A_{0,k}\|_\infty\le C|\eta|$ (this bound is valid verbatim for complex $\eta$, since $A_{X,k}$ is a fixed real function). Choosing $r_I<\delta_I/4$ and $C\eta_I<\delta_I/4$ gives \eqref{eq:gap} with $c_I=\delta_I/2$.
\end{proof}

Throughout, for $\lambda$ in the disks of Lemma~\ref{lem:gap} we write
\begin{equation}\label{eq:mK}
m_\eta(\lambda,k;p):=\frac{a_k(p)}{\lambda+iA_{\eta,k}(p)}\quad(\text{extended by }0\text{ off }S_k),
\qquad
K_\eta(\lambda,k):=m_\eta(\lambda,k)\,W_R .
\end{equation}
Since the zero set of the denominator is uniformly separated from $S_k$, the zero extension $m_\eta$ is of class $C^{m}$ in $(k,p)$, holomorphic in $\lambda$ on the disks, and holomorphic in $\eta$ for $|\eta|\le\eta_I$.

\section{Exchange resummation and the effective dielectric function}\label{sec:schur}

Taking the Fourier--Laplace transform of \eqref{eq:fibereq} for $\Re\lambda$ large (Lemma~\ref{lem:wp}) gives
\begin{equation}\label{eq:laplace}
\bigl(\lambda+iA_{\eta,k}-i\eta\,a_kW_R\bigr)\widetilde q_k+iV(k)\,a_k\,\ellf(\widetilde q_k)=q_{k,0}
\end{equation}
for data supported in $B_R$ (Remark~\ref{rem:reduction}). Division by $\lambda+iA_{\eta,k}$ is always legitimate for $\Re\lambda>0$ (there $|\lambda+iA_{\eta,k}|\ge\Re\lambda$ pointwise); on the \emph{closed} disks of Lemma~\ref{lem:gap} it is legitimate when applied to functions supported in $S_k$ --- more generally, in a compact set on which \eqref{eq:gap} holds. Writing $s_{k,0}=(\lambda+iA_{\eta,k})^{-1}q_{k,0}$,
\begin{equation}\label{eq:reduced}
\bigl(\Id-i\eta K_\eta\bigr)\widetilde q_k+iV(k)\,m_\eta\,\ellf(\widetilde q_k)=s_{k,0}.
\end{equation}

\begin{lemma}[Uniform resummation of the exchange block]\label{lem:neumann}
There is $\eta_0>0$ such that for $|\eta|<\eta_0$ \textup{(}complex\textup{)}, $k\in\Omega_I$, and $\lambda$ in the disks of Lemma~\textup{\ref{lem:gap}},
\begin{equation}\label{eq:E}
E_\eta(\lambda,k):=\bigl(\Id-i\eta K_\eta(\lambda,k)\bigr)^{-1}\in\mathcal B(X),\qquad
\sup\|E_\eta\|_{X\to X}\le C_I,
\end{equation}
and moreover $\|E_\eta-\Id\|_{C^m_{k,\lambda}(\mathcal B(X))}\le C_I|\eta|$.
\end{lemma}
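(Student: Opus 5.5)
The plan is a Neumann series in $\mathcal B(X)$, driven by a uniform bound on $K_\eta=m_\eta W_R$. By Lemma~\ref{lem:WR}, $\|W_R\|_{X\to X}\le CR$, and $R$ depends only on $g$ and $I$. By Lemma~\ref{lem:gap}, on the disks $|\lambda-\sigma i\tau_0(|k|)|\le r_I$ we have $|m_\eta(\lambda,k;p)|\le\|a_k\|_\infty/c_I\le 2\|g\|_\infty/c_I$ pointwise on $S_k$, with $m_\eta=0$ off $S_k$. This holds for all complex $|\eta|\le\eta_I$, so multiplication by $m_\eta$ is bounded on $X=L^2(B_R)$. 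Hence
\[
\|K_\eta(\lambda,k)\|_{X\to X}\le M:=2CR\|g\|_\infty/c_I
\]
uniformly in $k\in\Omega_I$, $\lambda$ in the disks, and $|\eta|\le\eta_I$. Choose $\eta_0\le\min(\eta_I,1/(2M))$. Then $\|i\eta K_\eta\|\le 1/2$, the series $E_\eta=\sum_{n\ge0}(i\eta K_\eta)^n$ converges in $\mathcal B(X)$, and $\|E_\eta\|\le2$. The same series gives $\|E_\eta-\Id\|\le 2|\eta|M$, which is the $C^0$ part of the last claim.

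For the $C^m_{k,\lambda}$ estimate, I would differentiate termwise. The operator $W_R$ does not depend on $(k,\lambda)$, so every derivative falls on the multiplier factors $m_\eta$ in $(i\eta m_\eta W_R)^n$. The main step is a uniform bound
\[
\sup_{|\alpha|+j\le m}\ \sup_{p}\,\bigl|\partial_k^\alpha\partial_\lambda^j m_\eta(\lambda,k;p)\bigr|\le M_m,
\]
uniform over the disks, $k\in\Omega_I$, and $|\eta|\le\eta_I$. To get it:
\begin{itemize}
\item $a_k(p)=g(p-\frac k2)-g(p+\frac k2)$ is $C^{m+3}$ in $(k,p)$ with bounded derivatives, by (G1).
\item $A_{\eta,k}(p)=2p\cdot k+\eta A_{X,k}(p)$ has bounded $k$-derivatives on the relevant set $|p|\le R$, by Lemma~\ref{lem:sigma}.
\item The denominator $\lambda+iA_{\eta,k}$ stays $\ge c_I$ in modulus on $S_k$, by Lemma~\ref{lem:gap}. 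Quotient-rule derivatives therefore involve only powers of $(\lambda+iA_{\eta,k})^{-1}$, each bounded by $c_I^{-1}$.
\end{itemize}

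The one delicate point, which I expect to be the main obstacle, is the zero extension across $\partial S_k$ as $k$ varies. It is harmless: the numerator $a_k$ vanishes to all orders up to $m+3$ at the moving boundary, and the denominator bound (Lemma~\ref{lem:gap}) holds on a neighbourhood of $\supp a_k$. To secure that neighbourhood, enlarge slightly by replacing $\delta_I$ with $\delta_I/2$ and using continuity of $\tau_0$. Then $m_\eta$ is $C^m$ in $(k,p)$ globally.

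Multiplication operators with these symbol bounds are bounded on $X$ with norm at most the sup norm. By the Leibniz rule, a derivative of order $\le m$ of the $n$-th term is a sum of at most $n^m$ products of $n$ factors, each of norm $\le|\eta|\max(M,M_m\|W_R\|)$. So the derivative of the $n$-th term is bounded by $n^m(|\eta|M')^n$ with $M'=\max(M,M_m CR)$. After shrinking $\eta_0$ so that $\eta_0M'\le1/2$, the sum over $n\ge1$ is at most $C_I|\eta|$. This gives $\|E_\eta-\Id\|_{C^m_{k,\lambda}(\mathcal B(X))}\le C_I|\eta|$.

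Holomorphy of $E_\eta$ in $\lambda$ and $\eta$ follows from locally uniform convergence of holomorphic operator-valued terms.
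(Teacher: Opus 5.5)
Your proof is correct and follows the paper's argument: a Neumann series driven by the uniform bound $\|K_\eta\|\le\|m_\eta\|_\infty\|W_R\|\le(\|a_k\|_\infty/c_I)\,CR$ from Lemmas~\ref{lem:WR} and~\ref{lem:gap}. The only variation is in the derivatives. You differentiate the series termwise with Leibniz counting, which costs a harmless further shrinking of $\eta_0$, whereas the paper iterates $\partial E_\eta=i\eta E_\eta(\partial K_\eta)E_\eta$ and so gets finitely many terms, each carrying a factor $\eta$. Your explicit check that the zero extension of $m_\eta$ is smooth fills in what the paper only asserts after \eqref{eq:mK}.
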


\begin{proof}
$\|K_\eta\|\le\|m_\eta\|_\infty\|W_R\|\le(\|a_k\|_\infty/c_I)\,CR$ by Lemmas~\ref{lem:WR}--\ref{lem:gap}; choose $\eta_0$ with $|\eta|\|K_\eta\|\le\frac12$ and expand in a Neumann series. Derivatives follow from $\partial E_\eta=i\eta E_\eta(\partial K_\eta)E_\eta$ iterated; holomorphy in $(\lambda,\eta)$ is inherited from $m_\eta$.
\end{proof}

\begin{definition}[Exchange-resummed dielectric function]\label{def:Deta}
For $|\eta|<\eta_0$, $k\in\Omega_I$, and $\lambda$ in the disks of Lemma~\ref{lem:gap},
\begin{equation}\label{eq:Deta}
D_\eta(\lambda,k):=1+iV(k)\,\ellf\bigl(E_\eta(\lambda,k)\,m_\eta(\lambda,k)\bigr).
\end{equation}
\end{definition}
Note that $E_\eta m_\eta$ is supported in $S_k$, so $\ellf$ is applied to an $X$-function of fixed compact support; $D_\eta$ is holomorphic in $(\lambda,\eta)$ and of class $C^m$ in $k$, and $D_0$ agrees with \eqref{eq:D0}.

\begin{proposition}[Feshbach--Schur reduction]\label{prop:schur}
Set $\sA_\eta(\lambda,k):=\Id-i\eta K_\eta+iV(k)\,m_\eta\otimes\ellf$ on $X$, so that \eqref{eq:reduced} reads $\sA_\eta\widetilde q_k=s_{k,0}$. If $D_\eta(\lambda,k)\ne0$, then $\sA_\eta$ is invertible with
\begin{equation}\label{eq:Ainv}
\sA_\eta^{-1}=E_\eta-\frac{iV(k)}{D_\eta(\lambda,k)}\,E_\eta m_\eta\otimes\ellf\,E_\eta,
\end{equation}
and the density is
\begin{equation}\label{eq:rhotilde}
\widetilde\rho_k(\lambda)=\ellf(\widetilde q_k)=\frac{\ellf\bigl(E_\eta(\lambda,k)\,s_{k,0}\bigr)}{D_\eta(\lambda,k)} .
\end{equation}
Consequently: \textup{(a)} the reduced operator family $\lambda\mapsto\sA_\eta(\lambda,k)^{-1}$ is meromorphic on the disks of Lemma~\textup{\ref{lem:gap}}, with poles exactly at the zeros of $D_\eta$ --- each zero is a genuine pole of the operator-valued family, since the residue \eqref{eq:res37} is a nonzero rank-one operator ($E_\eta$ is invertible, $m_\eta\ne0$, and $\ellf\ne0$); \textup{(b)} for initial data $q_{k,0}$ supported in $S_k$ --- more generally, in a compact set on which \eqref{eq:gap} holds --- the factor $s_{k,0}$ is holomorphic on the disks, so the density resolvent \eqref{eq:rhotilde} is meromorphic there and its poles are \emph{contained in} the zero set of $D_\eta$: for particular data the scalar numerator $\ellf(E_\eta s_{k,0})$ may vanish at a zero of $D_\eta$ and cancel the pole, so no claim of exactness is made at the level of a fixed datum.
\end{proposition}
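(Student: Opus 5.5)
The plan is to treat $\sA_\eta$ as a rank-one perturbation of the invertible operator $\Id-i\eta K_\eta$ and invert it with the Sherman--Morrison formula. By Lemma~\ref{lem:neumann}, $\Id-i\eta K_\eta$ is invertible on $X$ with inverse $E_\eta$, so
\[
\sA_\eta=\bigl(\Id-i\eta K_\eta\bigr)\bigl(\Id+iV(k)\,E_\eta m_\eta\otimes\ellf\bigr).
\]
The inner factor is $\Id+u\otimes\ellf$ with $u=iV(k)E_\eta m_\eta$. We first check that $\ellf$ is a bounded functional in this setting. On $X=L^2(B_R)$ it is bounded, since $B_R$ has finite measure and Cauchy--Schwarz gives $|\ellf(h)|\le C_R\|h\|_X$. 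So $u\otimes\ellf$ is a bounded rank-one operator. The operator $\Id+u\otimes\ellf$ is invertible exactly when $1+\ellf(u)\ne0$, and $1+\ellf(u)=D_\eta$ by Definition~\ref{def:Deta}. Its inverse is $\Id-D_\eta^{-1}\,u\otimes\ellf$, which we verify directly:
\[
(\Id+u\otimes\ellf)(\Id-D_\eta^{-1}u\otimes\ellf)=\Id+\bigl(1-D_\eta^{-1}-D_\eta^{-1}\ellf(u)\bigr)u\otimes\ellf=\Id.
\]
Composing the two inverses in the correct order gives
\[
\sA_\eta^{-1}=\bigl(\Id-D_\eta^{-1}u\otimes\ellf\bigr)E_\eta=E_\eta-\frac{iV(k)}{D_\eta}\,E_\eta m_\eta\otimes(\ellf\circ E_\eta),
\]
which is \eqref{eq:Ainv}.

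For the density formula \eqref{eq:rhotilde}, apply $\ellf$ to $\widetilde q_k=\sA_\eta^{-1}s_{k,0}$. This gives
\[
\ellf(E_\eta s_{k,0})\Bigl(1-\frac{\ellf(u)}{D_\eta}\Bigr)=\ellf(E_\eta s_{k,0})\,\frac{1}{D_\eta}.
\]
Alternatively, apply $\ellf\circ E_\eta$ to \eqref{eq:reduced} and solve the resulting scalar equation $\widetilde\rho_k\,D_\eta=\ellf(E_\eta s_{k,0})$.

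For (a), note that $E_\eta$, $m_\eta$ and $D_\eta$ are holomorphic in $\lambda$ on the disks (Lemma~\ref{lem:neumann} and \eqref{eq:mK}). Hence \eqref{eq:Ainv} is meromorphic there, with singularities only at zeros of $D_\eta$. These zeros are isolated: $D_\eta$ is not identically zero on a disk, because it is close to $D_0$ and $D_0$ has only simple zeros there by Assumption~\ref{ass:input}. It remains to show that each zero is a genuine pole. This is the step needing care, though it is still elementary. At a zero $\lambda_*$ of order $n$, the coefficient of $(\lambda-\lambda_*)^{-n}$ is a nonzero multiple of the rank-one operator $E_\eta m_\eta\otimes\ellf E_\eta$ evaluated at $\lambda_*$. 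That operator is nonzero for three reasons. First, $E_\eta m_\eta\ne0$, since $E_\eta$ is injective and $m_\eta\ne0$ (because $a_k\not\equiv0$ for $k\ne0$). Second, $\ellf\circ E_\eta\ne0$, since $E_\eta$ is surjective onto $X$ and $\ellf\ne0$ on $X$. Third, $iV(k)\ne0$. So the principal part does not vanish and $\lambda_*$ is a pole. At a simple zero this coefficient is exactly the residue \eqref{eq:res37}.

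For (b), data supported in a compact set satisfying \eqref{eq:gap} make $s_{k,0}=(\lambda+iA_{\eta,k})^{-1}q_{k,0}$ holomorphic in $\lambda$ with values in $X$. Therefore the numerator $\ellf(E_\eta s_{k,0})$ is holomorphic, and the poles of \eqref{eq:rhotilde} lie in the zero set of $D_\eta$. We note only this containment and do not claim equality, since the numerator can vanish at a zero of $D_\eta$ for particular data.
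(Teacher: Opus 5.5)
Your proof is correct and is essentially the paper's argument. The paper verifies \eqref{eq:Ainv} by multiplying out, using exactly your Sherman--Morrison identity $1-D_\eta^{-1}-D_\eta^{-1}\ellf(u)=0$ (multiplied by $iV(k)$), and it derives \eqref{eq:rhotilde} by applying $\ellf$ after inverting $\Id-i\eta K_\eta$, which is your alternative route; your factorization $\sA_\eta=(\Id-i\eta K_\eta)(\Id+u\otimes\ellf)$ is a slightly cleaner packaging that gives two-sided invertibility at once, and your leading-coefficient argument in (a) shows every zero of $D_\eta$ is a genuine pole whatever its order, without needing the simplicity established later via Rouch\'e.
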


\begin{remark}[Reduced versus full resolvent]\label{rem:reducedvsfull}
The operator $\sA_\eta^{-1}$ of \eqref{eq:Ainv} is the inverse of the Feshbach--Schur (Birman--Schwinger-type) \emph{reduced} operator of the interaction region, not the resolvent of the full fiber generator: the two differ by the free factor $(\lambda+iA_{\eta,k})^{-1}$, which for general data is holomorphic only in $\{\Re\lambda>0\}$ and possesses resonant sets inside $B_R\setminus S_k$ as well as outside $B_R$. All pole statements in this paper concern the reduced family and, via \textup{(b)} above, the density response to gap-localized data; the boundary behavior for general data is discussed in Remark~\ref{rem:generaldata}.
\end{remark}

\begin{proof}
Apply $\ellf$ to \eqref{eq:reduced} after inverting $\Id-i\eta K_\eta$:
$\widetilde q_k=E_\eta s_{k,0}-iV\ellf(\widetilde q_k)E_\eta m_\eta$, hence
$\ellf(\widetilde q_k)\bigl(1+iV\ellf(E_\eta m_\eta)\bigr)=\ellf(E_\eta s_{k,0})$, which is \eqref{eq:rhotilde}. For \eqref{eq:Ainv} one multiplies out and uses
$iV-\frac{iV}{D_\eta}-\frac{(iV)^2}{D_\eta}\ellf(E_\eta m_\eta)
=\frac{iV}{D_\eta}\bigl(D_\eta-1-iV\ellf(E_\eta m_\eta)\bigr)=0$.
\end{proof}

\begin{remark}\label{rem:scalarbutexact}
Although $D_\eta$ is scalar, it contains the full operator inverse $E_\eta=(\Id-i\eta m_\eta W_R)^{-1}$: it is the exact Feshbach reduction of the operator-valued linear response, not a dielectric function in which the dynamic exchange has been discarded as an external forcing.
\end{remark}

\section{Persistence and pure imaginarity}\label{sec:persistence}

\begin{lemma}\label{lem:Dpert}
For $|\eta|<\eta_0$ \textup{(}complex\textup{)} and $\lambda$ on the closed disks $\overline{U_{\sigma,k}}$ with $r_0\le r_I$,
\begin{equation}\label{eq:Dpert}
\|D_\eta-D_0\|_{C^m_{k,\lambda}}\le C_I|\eta| ,
\end{equation}
and $D_\eta(\cdot,k)$ is holomorphic on the disks.
\end{lemma}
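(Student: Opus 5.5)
The plan is to split the difference by writing, with $V=V(k)$,
\[
D_\eta-D_0=iV\,\ellf\bigl((E_\eta-\Id)m_\eta\bigr)+iV\,\ellf\bigl(m_\eta-m_0\bigr),
\]
and to bound each term in $C^m_{k,\lambda}$, uniformly for $k\in\Omega_I$ and $\lambda\in\overline{U_{\sigma,k}}$ with $r_0\le r_I$. The prefactor $V(k)=4\pi|k|^{-2}$ is smooth on $\Omega_I$, since $|k|\ge k_->0$, so all its $k$-derivatives up to order $m$ are bounded by constants depending only on $I$. The functional $\ellf$ is applied only to functions supported in $S_k\subset B_{R-1}$. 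On such functions it is bounded on $X=L^2(B_R)$ by Cauchy--Schwarz, with $|\ellf(h)|\le(2\pi)^{-3}|B_R|^{1/2}\|h\|_X$. Since $\ellf$ is linear and independent of $(k,\lambda)$, it commutes with $\partial_k^\alpha\partial_\lambda^\beta$. It therefore suffices to estimate $\|(E_\eta-\Id)m_\eta\|_{C^m(X)}$ and $\|m_\eta-m_0\|_{C^m(X)}$.

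For the second term I use the pointwise identity
\[
m_\eta-m_0=\frac{-i\eta A_{X,k}\,a_k}{(\lambda+iA_{\eta,k})(\lambda+iA_{0,k})}\quad\text{on }S_k .
\]
By Lemma~\ref{lem:gap}, both denominators are bounded below by $c_I$ on $S_k$, for $|\eta|\le\eta_I$ (complex included) and $\lambda$ in the disks. By Lemma~\ref{lem:sigma}, $A_{X,k}$, $A_{0,k}$ and $a_k$ lie in $C^{m+2}$ with bounds uniform in $k\in\Omega_I$. Differentiating up to $m$ times in $(k,\lambda)$ gives a sum of terms of the form (smooth numerator)$\times$(powers of the inverse denominators). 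Each such term is bounded by $C_I|\eta|$ in $L^\infty(S_k)$, hence in $X$.

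One point needs care: the zero extension off $S_k$ must remain $C^m$ in $(k,p)$, even though $S_k$ moves with $k$. This is the smoothness already asserted after \eqref{eq:mK}, and it holds because $a_k$ vanishes to order $m+3$ at $\partial S_k$ while the denominator stays nonzero in a neighbourhood of $S_k$. The gap \eqref{eq:gap} extends to a slightly larger compact set by continuity, possibly after shrinking $r_I$ and $\eta_I$. The same argument shows $\|m_\eta\|_{C^m(X)}\le C_I$.

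For the first term, Lemma~\ref{lem:neumann} gives $\|E_\eta-\Id\|_{C^m_{k,\lambda}(\mathcal B(X))}\le C_I|\eta|$. The Leibniz rule applied to the product $(E_\eta-\Id)m_\eta$ then yields the bound $C_I|\eta|$, using $\|m_\eta\|_{C^m(X)}\le C_I$. Combining the two estimates gives \eqref{eq:Dpert}.

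Holomorphy in $\lambda$ on the disks follows because $m_\eta$ is holomorphic in $\lambda$ there, as an $X$-valued map, by the uniform lower bound on the denominator. The map $E_\eta$ is holomorphic as well, being a Neumann series of holomorphic operator-valued functions that converges uniformly. Finally, $\ellf$ is a bounded linear functional, so $D_\eta$ inherits holomorphy.

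The main obstacle is the uniformity of the $C^m_{k}$ bounds under the moving support $S_k$, and the treatment of derivatives of $K_\eta=m_\eta W_R$ in $k$. I would handle these by noting that $W_R$ is independent of $k$ and $\lambda$, so every derivative falls on the multiplier $m_\eta$. The estimate then reduces to the $L^\infty$ bounds on $\partial^\alpha m_\eta$ above together with $\|W_R\|\le CR$ from Lemma~\ref{lem:WR}.
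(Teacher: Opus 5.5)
Your proposal is correct and follows the paper's proof. You use the same splitting $D_\eta-D_0=iV(k)\bigl[\ellf((E_\eta-\Id)m_\eta)+\ellf(m_\eta-m_0)\bigr]$, control the first term by Lemma~\ref{lem:neumann}, the second by Lemma~\ref{lem:sigma} together with the gap, and use that $V(k)$ is bounded (with bounded derivatives) on $\Omega_I$. You also spell out what the paper compresses into ``higher derivatives are handled by the same bounds'': the explicit identity for $m_\eta-m_0$, the Leibniz rule, smoothness of the zero extension near the moving support $S_k$, and how holomorphy in $\lambda$ is inherited.
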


\begin{proof}
$D_\eta-D_0=iV(k)\bigl[\ellf((E_\eta-\Id)m_\eta)+\ellf(m_\eta-m_0)\bigr]$; the first term is $O(|\eta|)$ by Lemma~\ref{lem:neumann} and the second by Lemma~\ref{lem:sigma} and the gap; $V(k)$ is bounded on $\Omega_I$ since $k_->0$. Higher derivatives are handled by the same bounds.
\end{proof}

\begin{proof}[Proof of Theorem~\ref{thm:main}\,(i)]
\emph{Persistence and uniqueness.} By Assumption~\ref{ass:input} the zeros $\pm i\tau_0(|k|)$ of $D_0(\cdot,k)$ are simple, and the set $\mathcal Z_0=\{(k,\lambda):k\in\Omega_I,\ \lambda=\pm i\tau_0(|k|)\}$ is compact. As detailed in Appendix~\ref{app:disks}, one may choose $r_0\in(0,r_I]$, uniform on $\Omega_I$, so that the disks $U_{+,k},U_{-,k}$ are disjoint, avoid the particle--hole region, and each contains exactly one (simple) zero of $D_0(\cdot,k)$; then
\begin{equation}\label{eq:d0}
d_0:=\min\bigl\{|D_0(\lambda,k)|:\ k\in\Omega_I,\ \sigma=\pm,\ |\lambda-\sigma i\tau_0(|k|)|=r_0\bigr\}>0 .
\end{equation}
If $C_I|\eta|<d_0$, Rouch\'e's theorem and \eqref{eq:Dpert} give exactly one zero $\lambda_{\sigma,\eta}(k)$ of $D_\eta(\cdot,k)$ in $U_{\sigma,k}$, counted with multiplicity; since the count is one, the zero is simple.

\emph{Pure imaginarity.} Restrict to $\lambda=i\tau$, $\tau$ real. On the disks,
\begin{equation}\label{eq:breal}
b_\eta(\tau,k;p):=\frac{a_k(p)}{\tau+A_{\eta,k}(p)}\quad\text{is real},\qquad
E^{\R}_\eta(\tau,k):=\bigl(\Id-\eta\,b_\eta W_R\bigr)^{-1}
\end{equation}
is a real operator (Lemma~\ref{lem:WR}), and $m_\eta(i\tau,k)=-i\,b_\eta(\tau,k)$, $E_\eta(i\tau,k)=E^\R_\eta(\tau,k)$, so that
\begin{equation}\label{eq:F}
F_\eta(\tau,k):=D_\eta(i\tau,k)=1+V(k)\,\ellf\bigl(E^\R_\eta(\tau,k)\,b_\eta(\tau,k)\bigr)\in\R .
\end{equation}
By simplicity of the Hartree zero, $\partial_\tau F_0(\tau_0(|k|),k)=i\,\partial_\lambda D_0\ne0$, and by compactness it is uniformly bounded away from zero on $\Omega_I$. The real-analytic implicit function theorem (with the uniform $C^1$ bounds of Lemma~\ref{lem:Dpert}) yields, for $|\eta|$ small, a unique real root $\tau_\eta(|k|)$ of $F_\eta(\cdot,k)$ near $\tau_0(|k|)$; then $i\tau_\eta(|k|)\in U_{+,k}$ is a zero of $D_\eta(\cdot,k)$ and, by the uniqueness from Rouch\'e's step, $\lambda_{+,\eta}(k)=i\tau_\eta(|k|)$.

\emph{The negative branch and radiality.} With the parity $U$ of Lemma~\ref{lem:WR} one has $Ua_kU=-a_k$, $UA_{\eta,k}U=-A_{\eta,k}$, $UW_RU=W_R$, hence, writing $L_\eta(\tau,k)=\tau+A_{\eta,k}-\eta a_kW_R$ (as an operator on $L^2(S_k)$; see Section~\ref{sec:analyticity}),
\[
U\,L_\eta(\tau,k)\,U=-L_\eta(-\tau,k),\qquad \ellf\circ U=\ellf ,
\]
and since $E^\R_\eta b_\eta=L_\eta^{-1}a_k$ on the interaction subspace (Lemma~\ref{lem:geom} below), we get $L_\eta(-\tau)^{-1}=-U\,L_\eta(\tau)^{-1}U$ and hence
\begin{align*}
F_\eta(-\tau,k)&=1+V\,\ellf\bigl(L_\eta(-\tau)^{-1}a_k\bigr)
=1+V\,\ellf\bigl(-U\,L_\eta(\tau)^{-1}\,Ua_k\bigr)\\
&=1+V\,\ellf\bigl(U\,L_\eta(\tau)^{-1}a_k\bigr)
=F_\eta(\tau,k),
\end{align*}
using $Ua_k=-a_k$ and $\ellf\circ U=\ellf$; i.e.\ $F_\eta$ is even in $\tau$, and $\lambda_{-,\eta}(k)=-i\tau_\eta(|k|)$. Rotational invariance $D_\eta(\lambda,\mathcal Rk)=D_\eta(\lambda,k)$ for $\mathcal R\in SO(3)$ makes $\tau_\eta$ radial, and the $C^m$ regularity in $r$ follows from the implicit function theorem with the $C^m$ bounds of Lemma~\ref{lem:Dpert}.
\end{proof}

\section{Analyticity in the exchange coupling and the exchange shift}\label{sec:analyticity}

The observation driving this section is that, on the imaginary axis, all $\eta$-dependence of the problem is \emph{affine}: with
\begin{equation}\label{eq:LC}
L_\eta(\tau,k)=\underbrace{\tau+A_{0,k}}_{=:L_{0,\tau}}+\ \eta\,C_k,
\qquad C_k:=A_{X,k}-a_kW_R ,
\end{equation}
one has $F_\eta(\tau,k)=1+V(k)\,\ellf(L_\eta^{-1}a_k)$, where $L_\eta^{-1}a_k$ is well defined on $L^2(S_k)$ as follows.

\begin{lemma}[Geometric series on the interaction subspace]\label{lem:geom}
Let $k\in\Omega_I$ and $|\tau-\tau_0(|k|)|\le r_I$. Define $L_{0,\tau}^{-1}h:=h/(\tau+A_{0,k})$ on $L^2(S_k)$ \textup{(}bounded by Lemma~\textup{\ref{lem:gap}}, extended by zero off $S_k$\textup{)}. Then $C_k$ preserves $L^2(S_k)$,
\begin{equation}\label{eq:etastar}
\bigl\|L_{0,\tau}^{-1}C_k\bigr\|_{L^2(S_k)\to L^2(S_k)}\le
\frac{\|A_{X,k}\|_{L^\infty(S_k)}+\|a_k\|_{L^\infty}\|W_R\|_{X\to X}}{c_I}=:\frac1{\eta_*},
\end{equation}
with $\eta_*>0$ uniform over $k\in\Omega_I$ and $\tau$ as above; the operator $L_\eta(\tau,k)$ is invertible on $L^2(S_k)$ for $|\eta|<\eta_*$, and
\begin{equation}\label{eq:geomF}
F_\eta(\tau,k)=1+V(k)\sum_{n\ge0}(-\eta)^n\,
\ellf\Bigl(\bigl(L_{0,\tau}^{-1}C_k\bigr)^{n}L_{0,\tau}^{-1}a_k\Bigr),
\end{equation}
absolutely and uniformly. Moreover $E^\R_\eta(\tau,k)\,b_\eta(\tau,k)=L_\eta(\tau,k)^{-1}a_k$.
\end{lemma}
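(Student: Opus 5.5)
We argue directly on $L^2(S_k)$, viewed as the closed subspace of $X=L^2(B_R)$ of functions vanishing off $S_k$.

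\emph{Invariance.} First we check that $C_k=A_{X,k}-a_kW_R$ maps $L^2(S_k)$ into itself. The multiplication by $A_{X,k}$ preserves supports, and $a_kW_Rh$ is supported in $\supp a_k=S_k$ whatever the support of $h$. Both pieces are bounded. By Lemma~\ref{lem:sigma}, $\|A_{X,k}\|_\infty\le 2\|\Sigma\|_\infty$, and by Lemma~\ref{lem:WR}, $\|a_kW_R\|\le\|a_k\|_\infty CR$.

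\emph{The bound \eqref{eq:etastar}.} By Lemma~\ref{lem:gap} at $\eta=0$ and $\lambda=i\tau$, we have $|\tau+A_{0,k}(p)|\ge c_I$ on $S_k$ whenever $|\tau-\tau_0(|k|)|\le r_I$. Hence $\|L_{0,\tau}^{-1}\|_{L^2(S_k)\to L^2(S_k)}\le c_I^{-1}$, and composing with the bound on $C_k$ gives \eqref{eq:etastar}. The quantities $\|A_{X,k}\|_\infty$ and $\|a_k\|_\infty$ are bounded uniformly in $k$ by $2\|\Sigma\|_\infty$ and $2\|g\|_\infty$, and $R$, $c_I$ do not depend on $k$. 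So $\eta_*$ is uniform over $k\in\Omega_I$ and $\tau$.

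\emph{Invertibility and the series.} We factor
\[
L_\eta=L_{0,\tau}\bigl(\Id+\eta L_{0,\tau}^{-1}C_k\bigr)
\]
on $L^2(S_k)$. For $|\eta|<\eta_*$ the second factor is inverted by a norm-convergent Neumann series, and $L_{0,\tau}$ is invertible there. Hence
\[
L_\eta^{-1}=\sum_{n\ge0}(-\eta)^n\bigl(L_{0,\tau}^{-1}C_k\bigr)^nL_{0,\tau}^{-1}.
\]
Applying this to $a_k\in L^2(S_k)$ and then applying $\ellf$, which is bounded on $L^2(S_k)$ because $S_k$ has bounded measure (Cauchy--Schwarz), gives \eqref{eq:geomF}. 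The series converges absolutely and uniformly, with geometric ratio $|\eta|/\eta_*$.

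\emph{The identity $E^\R_\eta b_\eta=L_\eta^{-1}a_k$.} This is the step needing care, since $E^\R_\eta$ acts on $X$ while $L_\eta$ acts on $L^2(S_k)$. Set $u=L_\eta^{-1}a_k\in L^2(S_k)$. Then
\[
(\tau+A_{\eta,k})u-\eta a_kW_Ru=a_k .
\]
Dividing pointwise on $S_k$ by $\tau+A_{\eta,k}$, which is nonzero there by Lemma~\ref{lem:gap} for $|\eta|\le\eta_I$, gives
\[
u-\eta b_\eta W_Ru=b_\eta ,
\]
using that $u$ and $a_kW_Ru$ both vanish off $S_k$. So $(\Id-\eta b_\eta W_R)u=b_\eta$ in $X$. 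For $|\eta|<\eta_0$, Lemma~\ref{lem:neumann} makes $E^\R_\eta=E_\eta(i\tau,k)$ the bounded inverse of this operator, and therefore $u=E^\R_\eta b_\eta$.

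Finally, combining this identity with \eqref{eq:F} yields $F_\eta=1+V\ellf(L_\eta^{-1}a_k)$, which is the representation assumed in \eqref{eq:LC}. All of this holds for $|\eta|<\min(\eta_*,\eta_0,\eta_I)$.
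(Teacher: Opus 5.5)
Your proposal is correct and follows essentially the same route as the paper: support invariance of $C_k$, the bound via Lemmas~\ref{lem:sigma}, \ref{lem:WR} and~\ref{lem:gap}, the factorization $L_\eta=L_{0,\tau}(\Id+\eta L_{0,\tau}^{-1}C_k)$ with a Neumann series, and boundedness of $\ellf$ on $L^2(S_k)$. Your check of $E^\R_\eta b_\eta=L_\eta^{-1}a_k$ (solve for $u$, divide pointwise on $S_k$, then use injectivity of $\Id-\eta b_\eta W_R$ on $X$) is the paper's one-line factorization written out. You also correctly note that $F_\eta$ itself is only defined for $|\eta|<\min(\eta_*,\eta_0,\eta_I)$.
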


\begin{proof}
$A_{X,k}$ is a multiplication operator, hence preserves supports; the range of $a_kW_R$ lies in $L^2(S_k)$; thus $C_k:L^2(S_k)\to L^2(S_k)$ and \eqref{eq:etastar} follows from Lemmas~\ref{lem:sigma}, \ref{lem:WR}, \ref{lem:gap}. Invertibility of $L_\eta=L_{0,\tau}(\Id+\eta L_{0,\tau}^{-1}C_k)$ and \eqref{eq:geomF} are the Neumann series; the last identity is the factorization
$(\Id-\eta b_\eta W_R)^{-1}(\tau+A_{\eta,k})^{-1}a_k=[(\tau+A_{\eta,k})-\eta a_kW_R]^{-1}a_k=L_\eta^{-1}a_k$,
valid on $L^2(S_k)$, together with the bound $|\ellf(h)|\le(2\pi)^{-3}|S_k|^{1/2}\|h\|_{L^2}$.
\end{proof}

\begin{proof}[Proof of Theorem~\ref{thm:main}\,(ii)]
\emph{Analyticity.} By Lemmas~\ref{lem:gap}--\ref{lem:neumann} and Definition~\ref{def:Deta}, $D_\eta(\lambda,k)$ is jointly holomorphic in $(\lambda,\eta)$ on $U_{\sigma,k}\times\{|\eta|<\eta_1\}$, $\eta_1:=\min\{\eta_0,\ \eta_*,\ d_0/(2C_I)\}$ (the last constraint keeps the Rouch\'e condition $C_I|\eta|<d_0$ in force on the whole disk of analyticity), and Lemma~\ref{lem:Dpert} holds verbatim for complex $\eta$. The zero is then unique and simple by Rouch\'e, and the residue calculus formula
\begin{equation}\label{eq:contour}
\lambda_{\sigma,\eta}(k)=\frac1{2\pi i}\oint_{\partial U_{\sigma,k}}\lambda\,\frac{\partial_\lambda D_\eta(\lambda,k)}{D_\eta(\lambda,k)}\dd\lambda
\end{equation}
represents $\lambda_{\sigma,\eta}(k)$ as an integral of a function holomorphic in $\eta$ with uniform bounds ($|D_\eta|\ge d_0-C_I|\eta|\ge d_0/2$ on $\partial U_{\sigma,k}$); differentiation under the integral shows that $\eta\mapsto\lambda_{\sigma,\eta}(k)$ is holomorphic on $\{|\eta|<\eta_1\}$. For real $\eta$ part (i) gives $\lambda_{\sigma,\eta}=\sigma i\tau_\eta$ with $\tau_\eta$ real; hence the Taylor coefficients of $\tau_\eta$ are real and the series converges absolutely and uniformly on $I$ for $|\eta|<\eta_1$. The residues $r_{\sigma,\eta}=1/\partial_\lambda D_\eta(\lambda_{\sigma,\eta},k)$ are holomorphic in $\eta$ for the same reason (the denominator is uniformly bounded away from zero by simplicity). The error bound follows from Cauchy's estimates on the circle $|\zeta|=3\eta_1/4$ together with $|\eta|\le\eta_1/2$.

\emph{Coefficients.} Substituting the geometric series \eqref{eq:geomF} into $F_\eta(\tau_\eta(r),\eta)=0$ and solving order by order determines $\tau_X^{(n)}$ recursively as finite combinations of the absolutely convergent integrals in \eqref{eq:geomF}; the $C^{m-2}(I)$ regularity follows as in Proposition~\ref{prop:shift} below. The case $n=1$ is Proposition~\ref{prop:shift}; the case $n=2$ is Corollary~\ref{cor:second}.
\end{proof}

\begin{proposition}[First-order shift: self-energy and vertex]\label{prop:shift}
Let $r=|k|\in I$ and $L_{0,*}=\tau_0(r)+A_{0,k}$. Then
\begin{equation}\label{eq:tauX}
\tau_X(r)=-\frac{\partial_\eta F_\eta(\tau_0(r),k)\big|_{\eta=0}}{\partial_\tau F_0(\tau_0(r),k)}
=-\frac{\ellf\bigl(L_{0,*}^{-1}C_kL_{0,*}^{-1}a_k\bigr)}{\ellf\bigl(L_{0,*}^{-2}a_k\bigr)},
\end{equation}
and the decomposition \eqref{eq:tauXsplit} holds; all integrals converge absolutely and $\tau_X\in C^{m-2}(I)$.
\end{proposition}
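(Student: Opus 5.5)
The plan is to differentiate the identity $F_\eta(\tau_\eta(r),k)=0$ at $\eta=0$ and to evaluate both partial derivatives with the geometric series of Lemma~\ref{lem:geom}.

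\textbf{Step 1 (formula for $\tau_X$).} By Theorem~\ref{thm:main}\,(ii), $\eta\mapsto\tau_\eta(r)$ is real-analytic. By Lemma~\ref{lem:geom} together with Lemma~\ref{lem:Dpert}, $F_\eta(\tau,k)$ is jointly $C^1$ in $(\tau,\eta)$ near $(\tau_0(r),0)$, and in fact analytic in $\eta$. Differentiating the implicit relation with the chain rule gives $\partial_\tau F_0\,\tau_X+\partial_\eta F_\eta|_{\eta=0}=0$. From Theorem~\ref{thm:main}\,(i), $\partial_\tau F_0(\tau_0(r),k)\ne0$, and this yields the first equality in \eqref{eq:tauX}.

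\textbf{Step 2 (the two derivatives).} For the numerator, I read off the $n=1$ term of \eqref{eq:geomF}:
\[
\partial_\eta F_\eta|_{\eta=0}=-V(k)\,\ellf\bigl(L_{0,*}^{-1}C_kL_{0,*}^{-1}a_k\bigr).
\]
Termwise differentiation is justified because the series converges uniformly for $|\eta|<\eta_*$. For the denominator, $F_0(\tau,k)=1+V(k)\,\ellf(a_k/(\tau+A_{0,k}))$, and the denominator $\tau+A_{0,k}$ stays $\ge c_I$ on $S_k$ by Lemma~\ref{lem:gap}. Differentiation under $\ellf$ is therefore legitimate and gives
\[
\partial_\tau F_0=-V(k)\,\ellf\bigl(L_{0,*}^{-2}a_k\bigr).
\]
The factors $V(k)$ cancel in the quotient, which gives the second expression in \eqref{eq:tauX}. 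In particular $\ellf(L_{0,*}^{-2}a_k)\neq0$. Since $C_k=A_{X,k}-a_kW_R$ and $L_{0,*}^{-1}$ is a multiplication operator, linearity of $\ellf$ splits the numerator into the $A_{X,k}$ term and the $-a_kW_R$ term. These are exactly $\tau_X^{\mathrm{self}}$ and $\tau_X^{\mathrm{vertex}}$ in \eqref{eq:tauXsplit}, with the signs as displayed.

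\textbf{Step 3 (absolute convergence and regularity).} For absolute convergence, all integrands are supported in $S_k$, which has uniformly bounded measure. The function $L_{0,*}^{-1}$ is bounded by $c_I^{-1}$ there, $A_{X,k}$ is bounded by Lemma~\ref{lem:sigma}, and $W_R$ is bounded on $X$ by Lemma~\ref{lem:WR}. In fact the kernel $|p-q|^{-2}$ is absolutely integrable against bounded compactly supported functions, so the double integral converges absolutely.

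For the $C^{m-2}(I)$ regularity, each factor depends on $k$ with the following smoothness:
\begin{itemize}
\item $\tau_0$ is $C^{m+1}$;
\item $a_k$ is $C^{m+3}$ in $(k,p)$ with support in a fixed ball;
\item $A_{X,k}$ is $C^{m+2}$;
\item $L_{0,*}^{-1}$ is $C^{m+1}$ in $k$ on $S_k$, by the gap.
\end{itemize}
The only delicate point is that the support $S_k$ moves with $k$. I avoid this by writing everything as integrals over the fixed ball $B_R$ with smooth integrands: the zero extension $L_{0,*}^{-1}a_k=m_0(i\tau_0,k)$ up to factors is $C^m$ by the remark after \eqref{eq:mK}. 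For the vertex term, I write it as $\iint \Phi_k(p)\,|p-q|^{-2}\,\Phi_k(q)\dd p\dd q$ with $\Phi_k=L_{0,*}^{-1}a_k\in C^m$. Derivatives in $k$ then fall on the $\Phi_k$'s, and the resulting integrals stay absolutely convergent. Radial dependence is handled by restricting to $k=re_1$.

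\textbf{Main obstacle.} I expect the main obstacle to be the loss of derivatives in this regularity bookkeeping: $\tau_0\in C^{m+1}$ enters composed inside $L_{0,*}^{-1}$, and the quotient must be differentiated. This is where I would settle for $m-2$ rather than track sharp counts.
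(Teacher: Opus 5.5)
Your proposal is correct and follows essentially the paper's argument: you differentiate $F_\eta(\tau_\eta(r),k)=0$ implicitly, obtain $\partial_\eta F|_{\eta=0}$ from the affine structure $L_\eta=L_{0,\tau}+\eta C_k$, cancel $V(k)$, and insert $C_k=A_{X,k}-a_kW_R$. The only differences are that you read the $\eta$-derivative off the $n=1$ term of \eqref{eq:geomF} rather than differentiating $L_\eta^{-1}$ directly (the same computation), and that your regularity bookkeeping via the $C^m$ zero extension $L_{0,*}^{-1}a_k$ on a fixed ball is more explicit than the paper's reference to Lemma~\ref{lem:neumann} and in fact yields more than the stated $C^{m-2}$.
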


\begin{proof}
Differentiating $F_\eta(\tau_\eta(r),k)=0$ in $\eta$ gives $\partial_\eta F+\partial_\tau F\,\partial_\eta\tau_\eta=0$. From $F_\eta=1+V\ellf(L_\eta^{-1}a_k)$ and the affine structure \eqref{eq:LC},
\[
\partial_\eta L_\eta^{-1}\big|_{\eta=0}=-L_{0,*}^{-1}C_kL_{0,*}^{-1},\qquad
\partial_\tau L_0^{-1}=-L_{0,*}^{-2},
\]
whence $\partial_\eta F|_0=-V\ellf(L_{0,*}^{-1}C_kL_{0,*}^{-1}a_k)$ and $\partial_\tau F_0=-V\ellf(L_{0,*}^{-2}a_k)$; the factor $V(k)$ cancels in the quotient. Inserting $C_k=A_{X,k}-a_kW_R$ gives \eqref{eq:tauXsplit}. The denominator is nonzero by simplicity of the Hartree zero; the numerator is finite by Lemmas~\ref{lem:WR} and \ref{lem:gap}; $k$-derivatives are handled as in Lemma~\ref{lem:neumann}.
\end{proof}

\begin{corollary}[Second-order shift]\label{cor:second}
With $\tau_X=\tau_X^{(1)}$ as above,
\begin{equation}\label{eq:tau2}
\tau_X^{(2)}(r)
=\frac{\ellf\bigl((L_{0,*}^{-1}C_k)^2L_{0,*}^{-1}a_k\bigr)
+\ellf\bigl((L_{0,*}^{-2}C_kL_{0,*}^{-1}+L_{0,*}^{-1}C_kL_{0,*}^{-2})a_k\bigr)\,\tau_X(r)
+\ellf\bigl(L_{0,*}^{-3}a_k\bigr)\,\tau_X(r)^2}
{\ellf\bigl(L_{0,*}^{-2}a_k\bigr)} .
\end{equation}
\end{corollary}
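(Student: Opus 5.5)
We need the second-order coefficient of $\tau_\eta$, obtained by expanding the implicit equation $F_\eta(\tau_\eta(r),k)=0$ to second order in $\eta$. By Lemma~\ref{lem:geom}, on the imaginary axis
\[
F_\eta(\tau,k)=1+V(k)\,\ellf\bigl(L_\eta(\tau,k)^{-1}a_k\bigr),\qquad L_\eta(\tau,k)=\tau+A_{0,k}+\eta C_k .
\]
By Theorem~\ref{thm:main}\,(ii), $\tau_\eta=\tau_0+\eta\tau_X+\eta^2\tau_X^{(2)}+O(\eta^3)$ is real-analytic. Write $\tau=\tau_0(r)+s$ with $s=\eta\tau_X+\eta^2\tau_X^{(2)}+\dots$. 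Then
\[
L_\eta(\tau,k)=L_{0,*}\bigl(\Id+L_{0,*}^{-1}(s+\eta C_k)\bigr),
\]
where $L_{0,*}^{-1}$ is the bounded multiplication operator on $L^2(S_k)$ from Lemma~\ref{lem:gap}. Since $|s|$ and $|\eta|$ are small, the joint Neumann series
\[
L_\eta^{-1}a_k=\sum_{j\ge0}(-1)^j\bigl(L_{0,*}^{-1}(s+\eta C_k)\bigr)^jL_{0,*}^{-1}a_k
\]
converges absolutely in $L^2(S_k)$. Applying the bounded functional $\ellf$ on $L^2(S_k)$ then yields a convergent double power series in $(s,\eta)$. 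Here we use that the scalar $s$ commutes with the multiplication operator $L_{0,*}^{-1}$ but not with $C_k$, because $C_k$ contains $a_kW_R$.

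Next, we collect terms through total order two, divide by $V(k)\ne0$, and use $F_0(\tau_0,k)=0$. Three terms survive.
\begin{itemize}
\item[(1)] The $j=1$ term gives $-s\,\ellf(L_{0,*}^{-2}a_k)-\eta\,\ellf(L_{0,*}^{-1}C_kL_{0,*}^{-1}a_k)$.
\item[(2)] The $j=2$ term gives $s^2\ellf(L_{0,*}^{-3}a_k)+s\eta\,\ellf\bigl((L_{0,*}^{-2}C_kL_{0,*}^{-1}+L_{0,*}^{-1}C_kL_{0,*}^{-2})a_k\bigr)+\eta^2\ellf\bigl((L_{0,*}^{-1}C_k)^2L_{0,*}^{-1}a_k\bigr)$.
\item[(3)] All remaining contributions are $O(|s|^3+|\eta|^3)$.
\end{itemize}
At order $\eta$ we recover $\tau_X=-\ellf(L_{0,*}^{-1}C_kL_{0,*}^{-1}a_k)/\ellf(L_{0,*}^{-2}a_k)$, which is Proposition~\ref{prop:shift}. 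At order $\eta^2$, substituting $s=\eta\tau_X+\eta^2\tau_X^{(2)}$ gives
\[
-\tau_X^{(2)}\ellf(L_{0,*}^{-2}a_k)+\tau_X^2\ellf(L_{0,*}^{-3}a_k)+\tau_X\,\ellf\bigl((L_{0,*}^{-2}C_kL_{0,*}^{-1}+L_{0,*}^{-1}C_kL_{0,*}^{-2})a_k\bigr)+\ellf\bigl((L_{0,*}^{-1}C_k)^2L_{0,*}^{-1}a_k\bigr)=0 .
\]
Solving this for $\tau_X^{(2)}$ produces exactly \eqref{eq:tau2}. The division is legitimate because $\ellf(L_{0,*}^{-2}a_k)=-\partial_\tau F_0/V\ne0$ by simplicity of the Hartree zero (Assumption~\ref{ass:input}).

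The step needing the most care is justifying the termwise identification of coefficients, that is, the joint analyticity of $(s,\eta)\mapsto F$. I would get this from the uniform bound \eqref{eq:etastar} together with $\|sL_{0,*}^{-1}\|\le|s|/c_I$, which make the double series converge on a small polydisk. The analyticity of $\eta\mapsto s(\eta)$ from Theorem~\ref{thm:main}\,(ii) then lets the composition be expanded, and uniqueness of Taylor coefficients gives the result. The remaining bookkeeping point is the sign conventions, namely $(-1)^j$ paired with the $-\eta$ of \eqref{eq:geomF}, which I would check against the first-order formula.
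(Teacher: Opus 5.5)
Your proposal is correct and is essentially the paper's argument. The paper expands $F(\tau_0+\eta\tau_X+\eta^2\tau_X^{(2)}+O(\eta^3),\eta)=0$ at order $\eta^2$ using $\partial_\tau F_0$, $\partial_\tau^2F$, $\partial_\tau\partial_\eta F$ and $\partial_\eta^2F$ at $(\tau_0(r),0)$, computed from the affine structure $L_\eta=L_{0,\tau}+\eta C_k$; your joint Neumann series in $(s,\eta)$ produces exactly these Taylor coefficients with the same signs, and it also makes explicit the joint analyticity that the paper leaves implicit.

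One wording slip: a scalar $s$ commutes with every operator, so ``$s$ does not commute with $C_k$'' is not the relevant point. What matters is that the multiplication operator $L_{0,*}^{-1}$ does not commute with $a_kW_R$, which is why both orderings $L_{0,*}^{-2}C_kL_{0,*}^{-1}$ and $L_{0,*}^{-1}C_kL_{0,*}^{-2}$ appear in the cross term, exactly as you keep them.
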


\begin{proof}
Expanding $F(\tau_0+\eta\tau_X+\eta^2\tau_X^{(2)}+O(\eta^3),\eta)=0$ at order $\eta^2$ gives
$\partial_\tau F\,\tau_X^{(2)}+\tfrac12\partial^2_\tau F\,\tau_X^2+\partial_\tau\partial_\eta F\,\tau_X+\tfrac12\partial^2_\eta F=0$ at $(\tau_0(r),0)$, where, by the affine structure \eqref{eq:LC},
\begin{gather*}
\partial_\eta^2F\big|_0=2V\ellf\bigl((L_{0,*}^{-1}C_k)^2L_{0,*}^{-1}a_k\bigr),\qquad
\partial_\tau^2F\big|_0=2V\ellf\bigl(L_{0,*}^{-3}a_k\bigr),\\
\partial_\tau\partial_\eta F\big|_0=V\ellf\bigl((L_{0,*}^{-2}C_kL_{0,*}^{-1}+L_{0,*}^{-1}C_kL_{0,*}^{-2})a_k\bigr),\qquad
\partial_\tau F_0=-V\ellf\bigl(L_{0,*}^{-2}a_k\bigr);
\end{gather*}
the factors $V$ cancel in the quotient.
\end{proof}

\begin{proposition}[Sign structure]\label{prop:signs}
For all $k\in\Omega_I$:
\begin{itemize}
\item[(a)] $\ellf\bigl(L_{0,*}^{-2}a_k\bigr)<0$; explicitly, with the marginal $\varphi_g(u)=2\pi\int_{|u|}^{\Upsilon}sG(s)\dd s$ of the radial profile $g(p)=G(|p|)$,
\begin{equation}\label{eq:Denexact}
\ellf\bigl(L_{0,*}^{-2}a_k\bigr)
=-\frac{4r^2}{(2\pi)^{3}}\int_{-\Upsilon}^{\Upsilon}\frac{\varphi_g(u)\,\bigl(\tau_0(r)+2ru\bigr)}
{\bigl[(\tau_0(r)+2ru)^2-r^4\bigr]^{2}}\dd u\ <\ 0 .
\end{equation}
\item[(b)] $\ellf\bigl(L_{0,*}^{-1}a_kW_RL_{0,*}^{-1}a_k\bigr)>0$; hence $\tau_X^{\mathrm{vertex}}<0$.
\item[(c)] If in addition $G$ is nonincreasing, then $\ellf\bigl(L_{0,*}^{-1}A_{X,k}L_{0,*}^{-1}a_k\bigr)\ge0$; hence $\tau_X^{\mathrm{self}}\ge0$.
\end{itemize}
Thus, for radial nonincreasing profiles, the static self-energy raises and the dynamic vertex lowers the plasmon frequency at first order.
\end{proposition}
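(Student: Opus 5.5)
We prove the three parts separately. We rotate coordinates so that $k=re_1$, which gives $A_{0,k}(p)=2rp_1$ and $L_{0,*}(p)=\tau_0(r)+2rp_1$. The only input from earlier in the paper is the gap: by Assumption~\ref{ass:input} and Lemma~\ref{lem:gap}, on $S_k$ we have $L_{0,*}\ge\tau_0(r)-2r\Upsilon-r^2>0$. So $L_{0,*}$ is a strictly positive real function on the support of every integrand below.

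\emph{Part (a).} Write $\ellf(L_{0,*}^{-2}a_k)$ as the difference of two integrals, one against $g(p-\tfrac k2)$ and one against $g(p+\tfrac k2)$. Shift $p\mapsto p\pm\tfrac k2$ in each. This gives
\[
(2\pi)^{-3}\int g(p)\Bigl[(\tau_0+2rp_1+r^2)^{-2}-(\tau_0+2rp_1-r^2)^{-2}\Bigr]\dd p .
\]
Integrate out $p_\perp$. Since $g(p)=G(|p|)$, the marginal at $p_1=u$ is $\int_{\R^2}G(\sqrt{u^2+|p_\perp|^2})\dd p_\perp=2\pi\int_{|u|}^{\Upsilon}sG(s)\dd s=\varphi_g(u)$. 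Now put $\alpha=\tau_0+2ru-r^2$ and $\beta=\tau_0+2ru+r^2$. The algebraic identity
\[
\beta^{-2}-\alpha^{-2}=\frac{(\alpha-\beta)(\alpha+\beta)}{(\alpha\beta)^2}=\frac{-4r^2(\tau_0+2ru)}{[(\tau_0+2ru)^2-r^4]^2}
\]
gives \eqref{eq:Denexact}. For the sign, note that for $|u|\le\Upsilon$ the gap gives $\tau_0+2ru\ge\tau_0-2r\Upsilon>r^2>0$. Also $\varphi_g\ge0$, and $\varphi_g>0$ on $(-\Upsilon,\Upsilon)$ by (G2). Hence the integral in \eqref{eq:Denexact} is strictly negative.

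\emph{Part (b).} Set $h:=L_{0,*}^{-1}a_k$. This is a real function, supported in $S_k\subset B_{R-1}$, and not identically zero. The outer operator $L_{0,*}^{-1}a_k$ acts by multiplication by $h$. Therefore
\[
\ellf(L_{0,*}^{-1}a_kW_RL_{0,*}^{-1}a_k)=(2\pi)^{-3}\int h\,W_Rh\dd p=(2\pi)^{-3}\langle h,Wh\rangle .
\]
Here $W_R$ agrees with $W$ because $\supp h\subset B_R$. The kernel $|p-q|^{-2}$ is a convolution kernel whose Fourier transform is a positive multiple of $|x|^{-1}$. Plancherel then gives $\langle h,Wh\rangle=c\int|x|^{-1}|\check h(x)|^2\dd x>0$ for $h\ne0$. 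We would check absolute convergence by noting that $h\in L^2$ has compact support, so HLS applies. Combined with (a), the definition \eqref{eq:tauXsplit} gives $\tau_X^{\mathrm{vertex}}<0$.

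\emph{Part (c).} Since $A_{X,k}$ is a multiplication operator, the numerator is
\[
\ellf(L_{0,*}^{-1}A_{X,k}L_{0,*}^{-1}a_k)=(2\pi)^{-3}\int \frac{A_{X,k}\,a_k}{L_{0,*}^2}\dd p .
\]
It therefore suffices to show that $A_{X,k}(p)\,a_k(p)\ge0$ pointwise. Suppose $\Phi$ is any radial nonincreasing function. Then $\Phi(p-\tfrac k2)-\Phi(p+\tfrac k2)$ has the sign of $|p+\tfrac k2|-|p-\tfrac k2|$, which is the sign of $p\cdot k$. This applies to $\Phi=G$, which gives $a_k$. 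It also applies to $\Phi=\Sigma$, which gives $A_{X,k}$, provided $\Sigma$ is radial nonincreasing. Then both factors share the sign of $p\cdot k$, their product is nonnegative, and (a) yields $\tau_X^{\mathrm{self}}\ge0$.

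The main obstacle is to show that $\Sigma=(2\pi)^{-3}V*g$ is radially nonincreasing. Radiality is already in Lemma~\ref{lem:sigma}. For monotonicity, I would use the layer-cake decomposition. Write $g=\int_0^\infty\mathbf 1_{B_{\rho(t)}}\dd t$ and $V=\int_0^\infty\mathbf 1_{B_{s}}\,\mu(\mathrm ds)$ with $\mu\ge0$. This reduces the claim to the elementary fact that $x\mapsto|B_s\cap B_\rho(x)|$ is radially nonincreasing in $|x|$ for centered balls. That fact follows by writing the measure of the lens as a function of the distance between the centres, or it can be quoted as the standard statement that convolution of symmetric decreasing functions is symmetric decreasing, e.g. via the Riesz rearrangement inequality. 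Finally, Fubini is justified because every quantity involved is nonnegative and $V\in L^1_{\mathrm{loc}}$ decays.
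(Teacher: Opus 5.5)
Your proposal is correct and follows the paper's proof essentially step for step. For (a) you shift $p\mapsto p\pm\frac k2$, reduce to the marginal $\varphi_g$, and apply the difference-of-squares identity; for (b) you use positivity of the Fourier transform of $|x|^{-2}$; for (c) you show $A_{X,k}a_k\ge0$ pointwise via layer-cake monotonicity of $\Sigma$. Your observation in (c) that both factors carry the sign of $p\cdot k$ is the accurate version of the paper's remark (which writes $-2ur$), though only their common sign matters for the product.
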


\begin{proof}
(a) Shifting $p\mapsto p\pm\frac k2$ in the two terms of $a_k$ and reducing to the marginal $\varphi_g$,
$\ellf(L_{0,*}^{-2}a_k)=(2\pi)^{-3}\int\varphi_g(u)\bigl[(\tau_0+2ru+r^2)^{-2}-(\tau_0+2ru-r^2)^{-2}\bigr]\dd u$,
and the exact algebra $A^{-2}-B^{-2}=-(A-B)(A+B)/(A^2B^2)$ with $A=\tau_0+2ru+r^2$, $B=\tau_0+2ru-r^2$ gives \eqref{eq:Denexact}; positivity of the integrand uses $\tau_0+2ru-r^2\ge\delta_I>0$ on $\supp\varphi_g$ (Lemma~\ref{lem:gap} at $\eta=0$).
(b) With $b=L_{0,*}^{-1}a_k$ real,
$\ellf(bW_Rb)=(2\pi)^{-3}\langle b,Wb\rangle$ and the kernel \eqref{eq:W} has positive Fourier transform ($\widehat{|x|^{-2}}=2\pi^2/|\xi|$ in $\R^3$), so the quadratic form is positive for $b\ne0$.
(c) $A_{X,k}(p)a_k(p)=\bigl[\Sigma(p-\tfrac k2)-\Sigma(p+\tfrac k2)\bigr]\bigl[g(p-\tfrac k2)-g(p+\tfrac k2)\bigr]\ge0$ pointwise, because $\Sigma$ is radial nonincreasing whenever $G$ is (the convolution of two nonnegative radial nonincreasing functions is radial nonincreasing, by the layer-cake representation and the fact that the convolution of indicators of centered balls is radial nonincreasing), so both brackets have the sign of $-2\,u\,r$ simultaneously. Since $L_{0,*}^{-2}>0$ on $S_k$, the integral is nonnegative.
\end{proof}

\section{Exact plasmon modes}\label{sec:modes}

On the weighted space $X_w$ of \eqref{eq:Xw}, define the fiber generator
\begin{equation}\label{eq:generator}
\sG_{\eta,k}\,q:=-i\bigl(A_{\eta,k}\,q+V(k)\,a_k\,\ellf(q)-\eta\,a_k\,Wq\bigr),
\qquad D(\sG_{\eta,k})=\{q\in X_w:\ A_{\eta,k}q\in X_w\},
\end{equation}
so that \eqref{eq:fibereq} reads $\partial_tq_k=\sG_{\eta,k}q_k$. Recall from Section~\ref{sec:fiber} that $\ellf$ is bounded on $X_w$ but not closable on $L^2$, so the weight is essential for $\sG_{\eta,k}$ to be a closed operator.

\begin{lemma}[The fiber generator and its essential spectrum]\label{lem:generator}
$\sG_{\eta,k}$ is a closed, densely defined operator on $X_w$ generating the $C_0$-group of Lemma~\ref{lem:wp}. The perturbation $Bq:=-i\bigl(V(k)a_k\ellf(q)-\eta\,a_kWq\bigr)$ is compact on $X_w$, and consequently
\[
\sigma_{\mathrm{ess}}(\sG_{\eta,k})=\sigma_{\mathrm{ess}}\bigl(-iA_{\eta,k}\,\cdot\,\bigr)=i\R .
\]
\end{lemma}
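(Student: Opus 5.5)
The proof splits $\sG_{\eta,k}=\sG^0+B$ with $\sG^0q:=-iA_{\eta,k}q$. Three things have to be checked in turn: the unperturbed part is closed and has spectrum $i\R$, the perturbation $B$ is compact, and Weyl's theorem then gives $\sigma_{\mathrm{ess}}(\sG_{\eta,k})=i\R$. Throughout I use Weyl's theorem in the form appropriate to non-self-adjoint operators, namely stability under relatively compact (here even bounded compact) perturbations of the essential spectrum defined via Fredholmness, or equivalently via singular Weyl sequences.

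\emph{Step 1: the multiplication part.} The weight $\langle p\rangle^2$ commutes with multiplication by $A_{\eta,k}$, so $q\mapsto\langle p\rangle^2q$ is a unitary map $X_w\to L^2$ conjugating $\sG^0$ to the maximal multiplication operator by $-iA_{\eta,k}$ on $L^2$. That operator is closed and densely defined on its maximal domain, which is exactly $D(\sG_{\eta,k})$. It is skew-adjoint and generates the isometric group $e^{-iA_{\eta,k}t}$. Its spectrum is the essential range of $-iA_{\eta,k}$. By Lemma~\ref{lem:sigma}, $A_{\eta,k}$ is a continuous real function with $|\nabla A_{\eta,k}|\ge2k_--C|\eta|>0$, and $A_{\eta,k}(p)=2p\cdot k+O(|\eta|)$ is unbounded above and below. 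Hence its essential range is all of $\R$, and every value is attained on a set of positive measure near which there is no atom. The spectrum of a multiplication operator with no eigenvalues is purely essential, so $\sigma_{\mathrm{ess}}(\sG^0)=i\R$. Concretely, for each $\mu\in\R$, normalized indicators of the shrinking sets $\{|A_{\eta,k}-\mu|<1/n\}$, intersected with disjoint balls, give singular Weyl sequences.

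\emph{Step 2: compactness of $B$.} The rank-one part $q\mapsto V(k)a_k\ellf(q)$ is bounded on $X_w$, since $\ellf$ is bounded there and $a_k\in C_c\subset X_w$; being finite rank, it is compact. For the exchange part, write $a_kWq=a_k\mathbf 1_{S_k}W\mathbf 1_{B_R}q+a_k\mathbf 1_{S_k}W\mathbf 1_{\{|q|>R\}}q$. The first piece factors through the compact operator $W_R$ of Lemma~\ref{lem:WR}, composed with the bounded restriction $X_w\to L^2(B_R)$ and the bounded map $L^2(B_R)\ni h\mapsto a_kh\in X_w$ (bounded because $a_k$ is bounded with compact support, so the weight is harmless on the range). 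For the tail, the kernel satisfies $|p-q|^{-2}\mathbf 1_{S_k}(p)\mathbf 1_{|q|>R}\lesssim\langle q\rangle^{-2}$, and $\langle q\rangle^{-2}\cdot\langle q\rangle^{-2}\in L^2$ after pulling out the $X_w$ weight. Therefore $q\mapsto\mathbf 1_{S_k}W\mathbf 1_{|q|>R}q$, viewed from $X_w$ (that is, acting on $\langle q\rangle^{-2}\widetilde q$ with $\widetilde q\in L^2$), has kernel in $L^2(S_k\times\R^3)$. It is Hilbert--Schmidt, hence compact. This tail estimate is the only step that actually uses the weight in an essential way, and I expect it to be the main point to verify carefully: one must check that the decay of the kernel, combined with the $\langle q\rangle^{-2}$ from the weight, is square integrable on $S_k\times\{|q|>R\}$. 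It is, since $\int_{|q|>R}\langle q\rangle^{-8}\dd q<\infty$. Thus $B$ is compact on $X_w$.

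\emph{Step 3: conclusion.} Since $B$ is bounded, $\sG_{\eta,k}=\sG^0+B$ is closed on $D(\sG^0)$ and densely defined. By the bounded perturbation theorem it generates a $C_0$-group, which coincides with the mild solution group of Lemma~\ref{lem:wp} by uniqueness of the Duhamel formulation. Since $B$ is compact, Weyl's theorem for closed operators gives $\sigma_{\mathrm{ess}}(\sG_{\eta,k})=\sigma_{\mathrm{ess}}(\sG^0)=i\R$. Here I use the Fredholm definition of essential spectrum, which is invariant under compact perturbations. This is the natural definition for non-normal operators and is compatible with the later statement in Theorem~\ref{thm:main}(iii) that isolated eigenvalues $\pm i\tau_\eta$ are embedded in $i\R$.
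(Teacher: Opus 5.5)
Your proposal is correct and follows essentially the same route as the paper. Both arguments split $\sG_{\eta,k}=-iA_{\eta,k}+B$, obtain generation by bounded perturbation, prove compactness of $B$ from the rank-one Hartree part plus a ball/tail decomposition of $a_kW$ with a Hilbert--Schmidt tail, invoke invariance of the essential spectrum under compact perturbations, and identify the essential range of the continuous, two-sided unbounded function $A_{\eta,k}$ as $\R$. The one local difference is the interior piece $a_kW\mathbf 1_{B_R}$: you get its compactness by factoring through the compact operator $W_R$ of Lemma~\ref{lem:WR}, whereas the paper uses $W:L^2\to\dot H^1$ together with Rellich's theorem, and both are valid.
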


\begin{proof}
The multiplication operator $-iA_{\eta,k}$ with the stated domain is closed on $X_w$ (the weight commutes with multiplication) and generates the isometric group $e^{-iA_{\eta,k}t}$; a bounded perturbation preserves closedness, the domain, and the generation property \cite{Kato}. For compactness of $B$: the Hartree part is bounded of rank one. For the exchange part, write $a_kW=a_kW\mathbf 1_{B_R}+a_kW\mathbf 1_{B_R^c}$. The multiplier identity $\widehat{|x|^{-2}}(\xi)=2\pi^2/|\xi|$ in $\R^3$ gives $W:L^2\to\dot H^1$, so $a_kW\mathbf 1_{B_R}$ maps $X_w\subset L^2$ boundedly into $H^1(S_k)$, which embeds compactly into $L^2(S_k)\subset X_w$ by Rellich's theorem. For the tail, if $p\in S_k$ and $|q|>R$ then $V(p-q)\lesssim\langle q\rangle^{-2}$, so the kernel of $a_kW\mathbf 1_{B_R^c}$, transferred to the weighted spaces, is dominated by $C|a_k(p)|\langle p\rangle^{2}\langle q\rangle^{-4}\in L^2(\dd p\dd q)$ and the tail is Hilbert--Schmidt \textup{(}cf.\ \eqref{eq:tail}\textup{)}. Since $B$ is compact, it is $\sG$-relatively compact, and relatively compact perturbations preserve the essential spectrum \cite[Theorem~IV.5.35]{Kato}. Finally, the essential spectrum of the multiplication operator is $-i$ times the essential range of $A_{\eta,k}$, which is all of $\R$: $A_{\eta,k}$ is continuous and $A_{\eta,k}(p)\to\pm\infty$ as $p\cdot k\to\pm\infty$ by Lemma~\ref{lem:sigma}.
\end{proof}

\begin{proof}[Proof of Theorem~\ref{thm:main}\,(iii)]
\emph{Existence and normalization.} An exponential solution $q=e^{\lambda t}\varphi$ satisfies
\begin{equation}\label{eq:eigen}
(\lambda+iA_{\eta,k})\varphi+iV(k)a_k\,\ellf(\varphi)-i\eta\,a_kW\varphi=0 .
\end{equation}
For $\lambda\in U_{\sigma,k}$ the right two terms are supported in $S_k$, and dividing by $\lambda+iA_{\eta,k}$ on $S_k$ (Lemma~\ref{lem:gap}),
\begin{gather*}
\varphi=-iV(k)\ellf(\varphi)\,m_\eta+i\eta\,m_\eta W_R\varphi
\ \Longrightarrow\
(\Id-i\eta K_\eta)\varphi=-iV(k)\ellf(\varphi)\,m_\eta\\
\Longrightarrow\
\varphi=-iV(k)\ellf(\varphi)\,E_\eta m_\eta .
\end{gather*}
Apply $\ellf$: either $\ellf(\varphi)=0$, and then $\varphi=0$; or the compatibility condition $1=-iV\ellf(E_\eta m_\eta)=1-D_\eta(\lambda,k)$ holds, i.e.\ $D_\eta(\lambda,k)=0$, i.e.\ $\lambda=\lambda_{\sigma,\eta}(k)$ by part~(i). The choice $\ellf(\varphi)=1$ gives $\varphi_{\sigma,\eta,k}=-iV(k)E_\eta m_\eta|_{\lambda=\lambda_{\sigma,\eta}}$, and then indeed
\[
\ellf(\varphi_{\sigma,\eta,k})=-iV\ellf(E_\eta m_\eta)=1-D_\eta(\lambda_{\sigma,\eta},k)=1 .
\]
The regularity $\varphi\in H^1$ follows from $\varphi=-iV\bigl(m_\eta+i\eta\,m_\eta W_R E_\eta m_\eta\bigr)$, the mapping property $W_R:L^2\to\dot H^1_{\mathrm{loc}}$ (Fourier multiplier $\widehat{|x|^{-2}}=2\pi^2/|\xi|$), and $m_\eta\in C^m_c$.

\emph{Uniqueness and support.} Let $(\lambda,q)$ solve \eqref{eq:eigen} with $\lambda\in U_{\sigma,k}$, $q\in D(\sG_{\eta,k})$. Off $S_k$, \eqref{eq:eigen} reads $(\lambda+iA_{\eta,k}(p))q(p)=0$. If $\Re\lambda\ne0$ the factor never vanishes ($A_{\eta,k}$ real), so $q=0$ off $S_k$. If $\lambda=i\tau$, the level set $\{\tau+A_{\eta,k}=0\}$ is a $C^{m+2}$ hypersurface of measure zero, since $|\nabla A_{\eta,k}|\ge2k_--C|\eta|>0$ (Lemma~\ref{lem:sigma}); again $q=0$ a.e.\ off $S_k$. On $S_k$ the computation above forces $q=\ellf(q)\varphi_{\sigma,\eta,k}$ and $\lambda=\lambda_{\sigma,\eta}(k)$.

\emph{Wave packets.} For $\chi\in C_c^{m-1}(\Omega_I)$, the superposition with kernel
$\widehat Q(t,p+\frac k2,p-\frac k2)=\chi(k)e^{\sigma i\tau_\eta(|k|)t}\varphi_{\sigma,\eta,k}(p)$
solves the linearized equation exactly (each fiber does, and $k\mapsto\varphi_{\sigma,\eta,k}$ is continuous in $L^2$ with uniform compact support), and by $\ellf(\varphi)=1$ its density is the Klein--Gordon-type packet
$\rho(t,x)=(2\pi)^{-3}\int\chi(k)e^{ik\cdot x+\sigma i\tau_\eta(|k|)t}\dd k$,
to which the dispersive bounds of Section~\ref{sec:dispersive} apply.
\end{proof}

\begin{remark}[Stability of an embedded eigenvalue]\label{rem:embedded}
The eigenvalues $\pm i\tau_\eta(|k|)$ are embedded in the essential spectrum $\sigma_{\mathrm{ess}}(\sG_{\eta,k})=i\R$ (Lemma~\ref{lem:generator}). Embedded eigenvalues are generically destroyed by perturbation; here they persist because the compact support of $a_k$ together with the uniform gap of Lemma~\ref{lem:gap} keeps the eigenfunction's momentum support away from the resonant set $\{\tau_\eta+A_{\eta,k}=0\}$. The compact-support hypothesis on the equilibrium is thus not a technical convenience: it is what protects the modes from Landau damping, in accordance with the threshold picture of \cite{NY1,NguyenJFA}.
\end{remark}

\begin{remark}[Residues and modes are two faces of one object]\label{rem:rankone}
Comparing with \eqref{eq:res37} below, the rank-one residue of $\sA_\eta^{-1}$ at $\lambda_{\sigma,\eta}$ is proportional to $\varphi_{\sigma,\eta,k}\otimes(\ellf\circ E_\eta(\lambda_{\sigma,\eta},k))$: the eigenfunction, the pole, and the residue are produced by one and the same Schur mechanism.
\end{remark}

\section{Green function decomposition and the sine form}\label{sec:green}

\begin{proposition}[Rank-one residues]\label{prop:res}
At $\lambda=\lambda_{\sigma,\eta}(k)$,
\begin{equation}\label{eq:res37}
\Res_{\lambda=\lambda_{\sigma,\eta}(k)}\ \sA_\eta(\lambda,k)^{-1}
=-\frac{iV(k)}{\partial_\lambda D_\eta(\lambda_{\sigma,\eta},k)}\;
E_\eta m_\eta\otimes\ellf\,E_\eta\Big|_{\lambda=\lambda_{\sigma,\eta}},
\end{equation}
a rank-one operator on $X$.
\end{proposition}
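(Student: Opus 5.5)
The residue will be read off from the explicit inverse \eqref{eq:Ainv} of Proposition~\ref{prop:schur}. Everything in that formula except the scalar factor $D_\eta^{-1}$ is holomorphic on $U_{\sigma,k}$, so the pole structure is carried entirely by one scalar function.

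\textbf{Holomorphy of the coefficients.} On $U_{\sigma,k}$ (with $r_0\le r_I$) the following hold.
\begin{itemize}
\item By Lemma~\ref{lem:gap} and \eqref{eq:mK}, $\lambda\mapsto m_\eta(\lambda,k)$ is holomorphic with values in $X$.
\item By Lemma~\ref{lem:neumann}, $E_\eta(\lambda,k)=(\Id-i\eta K_\eta)^{-1}$ is holomorphic with values in $\mathcal B(X)$. It is given by a norm-convergent Neumann series of holomorphic operator families, uniformly bounded by $C_I$.
\item Hence $\lambda\mapsto E_\eta m_\eta\otimes\ellf E_\eta$ is a holomorphic family of rank-$\le1$ bounded operators. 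Here $\ellf$ is bounded on $X=L^2(B_R)$, since $B_R$ has finite measure.
\item The first term $E_\eta$ of \eqref{eq:Ainv} contributes no residue.
\end{itemize}

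\textbf{The scalar pole.} By Theorem~\ref{thm:main}\,(i), $D_\eta(\cdot,k)$ has exactly one zero $\lambda_{\sigma,\eta}$ in $U_{\sigma,k}$, and it is simple. So $\partial_\lambda D_\eta(\lambda_{\sigma,\eta},k)\ne0$, and we can write
\[
D_\eta(\lambda,k)^{-1}=\frac{1}{\partial_\lambda D_\eta(\lambda_{\sigma,\eta},k)\,(\lambda-\lambda_{\sigma,\eta})}+h(\lambda),
\]
with $h$ holomorphic near $\lambda_{\sigma,\eta}$. Multiplying this expansion by the holomorphic family $N(\lambda):=-iV(k)E_\eta m_\eta\otimes\ellf E_\eta$ gives $N(\lambda)/(\partial_\lambda D_\eta\,(\lambda-\lambda_{\sigma,\eta}))$ plus a holomorphic term. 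Expanding $N(\lambda)=N(\lambda_{\sigma,\eta})+O(\lambda-\lambda_{\sigma,\eta})$ in operator norm, the residue is $N(\lambda_{\sigma,\eta})/\partial_\lambda D_\eta(\lambda_{\sigma,\eta},k)$, which is exactly \eqref{eq:res37}.

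\textbf{Rank one and nonvanishing.} The operator $u\otimes\ell$ with $u=E_\eta m_\eta$ and $\ell=\ellf\circ E_\eta$ has rank at most one, and it is exactly one provided both $u\ne0$ and $\ell\ne0$.
\begin{itemize}
\item $u\ne0$: since $D_\eta(\lambda_{\sigma,\eta})=0$, we have $iV(k)\ellf(E_\eta m_\eta)=-1$, so $\ellf(u)\ne0$. (Equivalently, $u$ is a nonzero multiple of the eigenfunction $\varphi_{\sigma,\eta,k}$ of Theorem~\ref{thm:main}\,(iii).)
\item $\ell\ne0$: evaluating at $u$'s preimage, $\ell(E_\eta^{-1}u)=\ellf(u)\ne0$. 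This uses that $E_\eta$ is invertible with inverse $\Id-i\eta K_\eta$.
\end{itemize}
The prefactor $V(k)/\partial_\lambda D_\eta$ is finite and nonzero because $k_->0$.

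\textbf{Main technical point.} The one step needing care is making sure the rank-one operator-valued Laurent expansion is legitimate in $\mathcal B(X)$ norm, not merely pointwise. This follows from the holomorphy bounds of Lemma~\ref{lem:neumann} together with the boundedness of $\ellf$ on $X$. I expect no genuine obstacle here.
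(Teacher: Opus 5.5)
Your proposal is correct and follows the paper's route. Like the paper, you read the residue directly off the Schur inverse \eqref{eq:Ainv}, using that $E_\eta$ and $m_\eta$ are holomorphic on the disk and that $D_\eta$ has a simple zero there, so $\Res D_\eta^{-1}=1/\partial_\lambda D_\eta$. Your check that the residue has rank exactly one, via $\ellf(E_\eta m_\eta)=-1/(iV(k))\neq0$ at the zero and $\ellf\circ E_\eta\neq0$, is a slightly sharper version of the justification the paper gives in Proposition~\ref{prop:schur}\,(a).
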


\begin{proof}
In \eqref{eq:Ainv}, $E_\eta$ and $m_\eta$ are holomorphic near $\lambda_{\sigma,\eta}$ and $D_\eta$ has a simple zero there, so $\Res D_\eta^{-1}=1/\partial_\lambda D_\eta$.
\end{proof}

Throughout the remainder of this section $\eta$ is \emph{real}, $|\eta|<\eta_0$ --- the only case needed for the Green function; reality of $A_{\eta,k}$ is used in the bounds below. The local analysis so far constructs $D_\eta$ only near the two disks. To speak of a \emph{causal} Green function, and to identify the local Laurent data with the singular part of its Laplace transform, two further ingredients are needed: a half-plane of analyticity with decay along vertical lines, and a \emph{single analytic continuation} joining the half-plane to the disks. Both are supplied by the pointwise invertibility of $\lambda+iA_{\eta,k}$ for $\Re\lambda>0$, combined with the analytic Fredholm theorem. Set
\[
M_I:=\sup_{|\eta|<\eta_0}\sup_{k\in\Omega_I}\sup_{p\in S_k}|A_{\eta,k}(p)|\le2k_+\Bigl(\Upsilon+\tfrac{k_+}{2}\Bigr)+C\eta_0,
\qquad
\Omega_k:=\{\Re\lambda>0\}\cup U_{+,k}\cup U_{-,k},
\]
and note that $\Omega_k$ is open and \emph{connected}: each disk is centered on the imaginary axis and meets the half-plane.

\begin{lemma}[Half-plane bounds with decay along vertical lines]\label{lem:halfplane}
Let $\eta$ be real, $|\eta|<\eta_0$, and $k\in\Omega_I$. The family $\lambda\mapsto m_\eta(\lambda,k)\in\mathcal B(X)$ is holomorphic on all of $\Omega_k$, given by the single formula \eqref{eq:mK} (the denominator is nonvanishing on $S_k$: by $|\lambda+iA_{\eta,k}|\ge\Re\lambda$ on the half-plane, by Lemma~\ref{lem:gap} on the disks), and for $\Re\lambda>0$,
\begin{equation}\label{eq:mdecay}
\|m_\eta(\lambda,k)\|_{L^\infty}\ \le\ \frac{\|a_k\|_{L^\infty}}{\max\{\Re\lambda,\ |\Im\lambda|-M_I\}} .
\end{equation}
Consequently there is $\Lambda_I\ge1$, independent of $\eta$ and of $k\in\Omega_I$, such that on the region
\[
\mathcal R_k:=\{\Re\lambda>\Lambda_I\}\cup\bigl\{\Re\lambda>0,\ |\Im\lambda|>2M_I+\Lambda_I\bigr\}
\]
the Neumann series for $E_\eta=(\Id-i\eta K_\eta)^{-1}$ converges with $\|E_\eta\|_{X\to X}\le2$, and
\begin{equation}\label{eq:Ddecay}
\bigl|D_\eta(\lambda,k)-1\bigr|\ \le\ \frac{C_I}{\max\{\Re\lambda,\ |\Im\lambda|-M_I\}}\ \le\ \frac12
\qquad(\lambda\in\mathcal R_k).
\end{equation}
In particular $D_\eta$ is zero-free on $\mathcal R_k$, and on each line $\{\Re\lambda=c\}$ with $c>\Lambda_I$,
\begin{equation}\label{eq:linedecay}
\bigl|D_\eta(c+i\omega,k)^{-1}-1\bigr|\ \le\ \frac{C_c}{1+|\omega|}\ \in\ L^2(\dd\omega),
\end{equation}
uniformly for $\Re\lambda\ge c$.
\end{lemma}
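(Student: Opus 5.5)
The plan is to prove the four assertions of Lemma~\ref{lem:halfplane} in the order stated: holomorphy of $m_\eta$ on $\Omega_k$, the pointwise bound \eqref{eq:mdecay}, the Neumann-series control of $E_\eta$ on $\mathcal R_k$, and the estimates \eqref{eq:Ddecay}--\eqref{eq:linedecay} for $D_\eta$.

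\emph{Holomorphy.} Since $\eta$ is real, $A_{\eta,k}$ is a real function, so $|\lambda+iA_{\eta,k}(p)|\ge\Re\lambda$ pointwise on $\{\Re\lambda>0\}$. On the disks $U_{\pm,k}$ the denominator is bounded below by $c_I$ on $S_k$, by Lemma~\ref{lem:gap}. Hence the single formula \eqref{eq:mK}, with zero extension off $S_k$, defines a function on $\Omega_k$ whose denominator is locally uniformly bounded below on $S_k$. Because $a_k$ is bounded with compact support, difference quotients converge in $L^\infty$, so $m_\eta$ is holomorphic as a multiplication operator in $\mathcal B(X)$.

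\emph{The bound \eqref{eq:mdecay}.} For $p\in S_k$ we have $|A_{\eta,k}(p)|\le M_I$. Bounding the modulus by the imaginary part gives $|\lambda+iA|\ge|\Im\lambda+A|\ge|\Im\lambda|-M_I$, and bounding it by the real part gives $|\lambda+iA|\ge\Re\lambda$. Taking the maximum of the two lower bounds yields \eqref{eq:mdecay}.

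\emph{Neumann series on $\mathcal R_k$.} On $\mathcal R_k$ the maximum in \eqref{eq:mdecay} is at least $\Lambda_I$: in the second component of $\mathcal R_k$ we have $|\Im\lambda|-M_I>M_I+\Lambda_I$. Therefore
\[
\|\eta K_\eta\|\le\eta_0\,\|a_k\|_\infty\,CR/\Lambda_I,
\]
using the Schur bound of Lemma~\ref{lem:WR}. Choosing $\Lambda_I$ large, uniformly in $k\in\Omega_I$ and in $|\eta|<\eta_0$, makes this at most $\tfrac12$, so the Neumann series converges and $\|E_\eta\|\le2$.

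\emph{The bound \eqref{eq:Ddecay}.} Write $D_\eta-1=iV(k)\,\ellf(E_\eta m_\eta)$. Since $E_\eta m_\eta$ is supported in $S_k\subset B_{R-1}$, Cauchy--Schwarz gives $|\ellf(h)|\le(2\pi)^{-3}|B_R|^{1/2}\|h\|_{L^2}$. Combining this with $\|m_\eta\|_{L^2}\le|S_k|^{1/2}\|m_\eta\|_\infty$, $\|E_\eta\|\le2$, and $V(k)\le4\pi k_-^{-2}$ gives the first inequality in \eqref{eq:Ddecay} with a constant $C_I$ independent of $\eta$ and $k$. Enlarging $\Lambda_I\ge2C_I$ gives the bound $\tfrac12$. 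In particular $|D_\eta|\ge\tfrac12$ on $\mathcal R_k$, so $D_\eta$ is zero-free there.

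\emph{The line bound \eqref{eq:linedecay}.} Fix $c>\Lambda_I$; the whole line $\{\Re\lambda=c\}$ then lies in $\mathcal R_k$. From $|D_\eta^{-1}-1|=|1-D_\eta|/|D_\eta|\le2|D_\eta-1|$ we get
\[
|D_\eta^{-1}-1|\le\frac{2C_I}{\max\{c,|\omega|-M_I\}}\le\frac{C_c}{1+|\omega|},
\]
because $\max\{c,|\omega|-M_I\}\gtrsim_c1+|\omega|$. The same estimate holds for every $\Re\lambda\ge c$. The right-hand side is square integrable in $\omega$.

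\emph{Main obstacle.} The only delicate point is uniformity: $\Lambda_I$ must be independent of $\eta$ and $k$. This follows because every constant used depends only on $\eta_0$, $R$, $\|a_k\|_\infty$, $k_-$ and $M_I$, all of which are uniform on the band. Reality of $\eta$ is essential in this argument, since it is what makes $A_{\eta,k}$ real and gives the lower bound $|\lambda+iA_{\eta,k}|\ge\Re\lambda$.
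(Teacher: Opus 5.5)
Your proposal is correct and follows the paper's proof essentially step for step. Both arguments use the same pointwise lower bound $\max\{\Re\lambda,|\Im\lambda|-M_I\}$ for the denominator on $S_k$, and the same choice $\Lambda_I\ge\max\{1,\,2\eta_0\|a_k\|_\infty\|W_R\|,\,2C_I\}$ to obtain $\|E_\eta\|\le2$ and $|D_\eta-1|\le\frac12$ on $\mathcal R_k$. Both then pass to the line bound via $|D_\eta^{-1}-1|\le2|D_\eta-1|$. The only difference is cosmetic: you apply Cauchy--Schwarz on $B_R$ rather than on $S_k$, which only changes the constant.
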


\begin{proof}
For $\Re\lambda>0$ and $p\in S_k$,
$|\lambda+iA_{\eta,k}(p)|\ge\max\{\Re\lambda,\,|\Im\lambda|-|A_{\eta,k}(p)|\}\ge\max\{\Re\lambda,\,|\Im\lambda|-M_I\}$,
which gives \eqref{eq:mdecay}, and the quotient is holomorphic with locally uniform bounds, hence $\mathcal B(X)$-holomorphic; on the disks the same formula is holomorphic by Lemma~\ref{lem:gap}, and the two readings agree on the overlap since the formula is one and the same. Then $\|K_\eta\|\le\|a_k\|_\infty\|W_R\|/\max\{\Re\lambda,|\Im\lambda|-M_I\}$; choosing $\Lambda_I:=\max\{1,\ 2\eta_0\|a_k\|_\infty\|W_R\|,\ 2C_I\}$ makes $|\eta|\,\|K_\eta\|\le\frac12$ on $\mathcal R_k$, and
$|D_\eta-1|\le V(k)(2\pi)^{-3}|S_k|^{1/2}\|E_\eta\|\|m_\eta\|_{L^2}$ gives \eqref{eq:Ddecay}; finally $|D_\eta^{-1}-1|\le2|D_\eta-1|$ on $\mathcal R_k$ yields \eqref{eq:linedecay}.
\end{proof}

\begin{lemma}[Meromorphic continuation and identification]\label{lem:continuation}
Let $\eta$ be real, $|\eta|<\eta_0$, and $k\in\Omega_I$. The family $K_\eta(\lambda,k)=m_\eta(\lambda,k)W_R$ is holomorphic on $\Omega_k$ with values in the compact operators on $X$, and $\Id-i\eta K_\eta$ is invertible on $\mathcal R_k$. By the analytic Fredholm theorem \cite[Theorem~VI.14]{ReedSimon}, applied on the \emph{connected} open set $\Omega_k$, the inverse $E_\eta$ extends to a finitely meromorphic family on $\Omega_k$ whose singular set $S_{\eta,k}$ is discrete and satisfies
\[
S_{\eta,k}\ \subset\ \bigl\{0<\Re\lambda\le\Lambda_I,\ |\Im\lambda|\le2M_I+\Lambda_I\bigr\}\setminus\bigl(U_{+,k}\cup U_{-,k}\bigr)
\]
(the disks are excluded because the Neumann series of Lemma~\ref{lem:neumann} converges there, and $\mathcal R_k$ by Lemma~\ref{lem:halfplane}). Consequently $D_\eta=1+iV(k)\,\ellf(E_\eta m_\eta)$ is a \emph{single} meromorphic function on $\Omega_k$ restricting to the constructions of Definition~\ref{def:Deta} on the disks and of Lemma~\ref{lem:halfplane} on $\mathcal R_k$; hence $D_\eta^{-1}$ is meromorphic on $\Omega_k$, its only singularities in $U_{+,k}\cup U_{-,k}$ are the simple poles $\lambda_{\pm,\eta}(k)$, all its other singularities lie in the bounded set $Q_k:=\{0<\Re\lambda\le\Lambda_I,\ |\Im\lambda|\le2M_I+\Lambda_I\}\setminus(U_{+,k}\cup U_{-,k})$ (bounded but not closed: its closure may meet the imaginary axis, and no claim is made that the singularities stay away from it), and the principal parts at $\lambda_{\sigma,\eta}(k)$ of the same function that represents the Laplace transform on $\{\Re\lambda>\Lambda_I\}$ are $r_{\sigma,\eta}(k)\,(\lambda-\lambda_{\sigma,\eta}(k))^{-1}$, as computed in \eqref{eq:laurent}.
\end{lemma}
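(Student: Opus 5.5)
The plan is to verify the hypotheses of the analytic Fredholm theorem on $\Omega_k$ and then transport the uniqueness of meromorphic continuation to $D_\eta$ and $D_\eta^{-1}$.

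\emph{Step 1: holomorphic compact family.} By Lemma~\ref{lem:halfplane}, $\lambda\mapsto m_\eta(\lambda,k)$ is a bounded multiplication operator on $X$ that depends holomorphically on $\lambda\in\Omega_k$, given by one formula on the half-plane and on the disks. Lemma~\ref{lem:WR} shows that $W_R$ is compact on $X$. Hence $K_\eta=m_\eta W_R$ is holomorphic with values in the compact operators.

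\emph{Step 2: analytic Fredholm theorem.} Connectedness of $\Omega_k$ follows because each disk $U_{\sigma,k}$ is centred on $i\R$ and therefore meets $\{\Re\lambda>0\}$. On $\mathcal R_k\neq\emptyset$, Lemma~\ref{lem:halfplane} gives invertibility of $\Id-i\eta K_\eta$. \cite[Theorem~VI.14]{ReedSimon} then yields that $E_\eta$ is meromorphic on $\Omega_k$, with a discrete singular set and finite-rank principal parts. The Neumann series of Lemma~\ref{lem:neumann} converges on the disks, and the one of Lemma~\ref{lem:halfplane} converges on $\mathcal R_k$. By uniqueness of analytic continuation on the connected set, both series agree with the Fredholm inverse there. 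So no singularity of $E_\eta$ lies in $U_{+,k}\cup U_{-,k}\cup\mathcal R_k$, and the singular set is contained in the complement of these sets inside $\{\Re\lambda>0\}$. That complement is exactly $Q_k$.

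\emph{Step 3: transfer to $D_\eta$ and $D_\eta^{-1}$.} $\ellf$ is a fixed bounded functional on $L^2(S_k)$, and $m_\eta$ is holomorphic, so $D_\eta=1+iV\ellf(E_\eta m_\eta)$ is meromorphic on $\Omega_k$. By uniqueness of continuation it coincides with Definition~\ref{def:Deta} on the disks and with the half-plane construction on $\mathcal R_k$. To show $D_\eta^{-1}$ is meromorphic on $\Omega_k$, the main point is that $D_\eta\not\equiv0$, which holds because $|D_\eta-1|\le\frac12$ on $\mathcal R_k$.
\begin{itemize}
\item In the disks, $D_\eta$ is holomorphic, and by Theorem~\ref{thm:main}(i) its only zeros are the simple zeros $\lambda_{\pm,\eta}(k)$.
\item On $\mathcal R_k$, $D_\eta$ is zero-free.
\item All remaining poles of $D_\eta^{-1}$, coming from zeros of $D_\eta$ or poles of $E_\eta$, lie in $Q_k$.
\end{itemize}
At a simple zero, the principal part is $(\partial_\lambda D_\eta)^{-1}(\lambda-\lambda_{\sigma,\eta})^{-1}=r_{\sigma,\eta}(\lambda-\lambda_{\sigma,\eta})^{-1}$.

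\emph{Step 4: identification with the Laplace transform.} For $\Re\lambda>\Lambda_I$ the Laplace-transformed density equation \eqref{eq:laplace} holds by Lemma~\ref{lem:wp}, taking $\Lambda_I$ larger than $C_k$ if necessary. Proposition~\ref{prop:schur} is purely algebraic, so it applies there with the half-plane $E_\eta$. Hence the function representing the transform on $\{\Re\lambda>\Lambda_I\}$ is the restriction of the single meromorphic $D_\eta^{-1}$, and its principal parts at $\lambda_{\sigma,\eta}$ are the ones computed above.

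\emph{Main obstacle.} The delicate point is Step 2's claim that the disk inverses and the half-plane inverses are the same function. Uniqueness of continuation needs a connected overlap, so I would check that each $U_{\sigma,k}\cap\{\Re\lambda>0\}$ is a connected half-disk. On that half-disk both $E_\eta$ (Neumann series) and the Fredholm inverse are holomorphic inverses of the same operator, hence equal by uniqueness of inverses. The remaining ingredient is that $\mathcal R_k$ meets the same component, which holds because $\Omega_k$ is connected.
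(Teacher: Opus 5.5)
Your Steps 1--3 are essentially the paper's proof. You use compactness of $W_R$ and the single-formula holomorphy of $m_\eta$ on $\Omega_k$, then the analytic Fredholm theorem on the connected set $\Omega_k$. You match the Fredholm inverse with the Neumann inverses on the disks and on $\mathcal R_k$ by pointwise uniqueness of bounded inverses; as you observe yourself, this needs no connected-overlap argument. Finally you locate the singularities of $D_\eta^{-1}$ using zero-freeness of $D_\eta$ on $\mathcal R_k$ and Theorem~\ref{thm:main}\,(i) in the disks.

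Drop Step~4, though, because as written it identifies the wrong object. The Laplace-transformed density from Proposition~\ref{prop:schur} is $\ellf(E_\eta s_{k,0})/D_\eta$, not $D_\eta^{-1}$. In the paper, the function representing the Laplace transform on $\{\Re\lambda>\Lambda_I\}$ is simply the half-plane function $D_\eta^{-1}$ of Lemma~\ref{lem:halfplane}; it is $D_\eta^{-1}-1$ that is inverted by Paley--Wiener in Proposition~\ref{prop:green}. So the identification is already contained in your Step~3 via the identity theorem.

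A minor wording point: poles of $E_\eta$ give at most zeros of $D_\eta^{-1}$, not poles. Only zeros of $D_\eta$ can produce singularities of $D_\eta^{-1}$.
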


\begin{proof}
Compactness of $W_R$ is Lemma~\ref{lem:WR}; holomorphy of $m_\eta$ on $\Omega_k$ and invertibility of $\Id-i\eta K_\eta$ on $\mathcal R_k$ are Lemma~\ref{lem:halfplane}. The analytic Fredholm alternative on the connected set $\Omega_k$ gives the finitely meromorphic inverse; since a bounded inverse is unique wherever it exists, the extension agrees with the Neumann inverses on the disks and on $\mathcal R_k$, which locates $S_{\eta,k}$ as claimed. Then $\ellf(E_\eta m_\eta)$, hence $D_\eta$, is meromorphic on $\Omega_k$ and agrees with the earlier local definitions; possible zeros of $D_\eta$ outside the disks cannot occur on $\mathcal R_k$ by \eqref{eq:Ddecay}, nor in the disks away from $\lambda_{\pm,\eta}$ by Theorem~\ref{thm:main}\,(i), so all singularities of $D_\eta^{-1}$ apart from $\lambda_{\pm,\eta}$ lie in $Q_k$. The identification of the principal parts is now immediate from connectedness and the identity theorem: there is one meromorphic function, and its Laurent expansion at $\lambda_{\sigma,\eta}$ was computed in \eqref{eq:laurent}.
\end{proof}

\begin{proposition}[Green function; sine form]\label{prop:green}
Let $\eta$ be real, $|\eta|<\eta_0$, and $k\in\Omega_I$.
\begin{itemize}
\item[\textup{(a)}] \textup{(Causal inverse transform.)} By \eqref{eq:linedecay} the function $\lambda\mapsto D_\eta(\lambda,k)^{-1}-1$ belongs to the Hardy space $H^2$ of every half-plane $\{\Re\lambda>c\}$, $c>\Lambda_I$. By the Paley--Wiener theorem for $H^2$ of a half-plane there is a unique measurable $g_\eta(\cdot,k)$, supported in $[0,\infty)$, with $e^{-ct}g_\eta(\cdot,k)\in L^2(\R)$ and
\[
\mathcal L[g_\eta](\lambda,k)=D_\eta(\lambda,k)^{-1}-1\qquad(\Re\lambda>c);
\]
$g_\eta$ does not depend on the abscissa $c$. Define the causal density Green function by
\begin{equation}\label{eq:Gdef}
\widehat G_\eta(t,k):=\delta(t)+g_\eta(t,k).
\end{equation}
\item[\textup{(b)}] \textup{(Principal parts.)} Write $\widetilde G_\eta$ for the unique meromorphic continuation of $D_\eta(\cdot,k)^{-1}$ from $\{\Re\lambda>\Lambda_I\}$ to $\Omega_k$ (Lemma~\ref{lem:continuation}). Then, on $U_{+,k}\cup U_{-,k}$,
\begin{equation}\label{eq:laurent}
\widetilde G_\eta(\lambda,k)=\sum_{\sigma=\pm}\frac{r_{\sigma,\eta}(k)}{\lambda-\lambda_{\sigma,\eta}(k)}+H_\eta(\lambda,k),
\qquad r_{\sigma,\eta}(k)=\frac{1}{\partial_\lambda D_\eta(\lambda_{\sigma,\eta}(k),k)},
\end{equation}
with $H_\eta$ holomorphic and uniformly bounded. The residues are purely imaginary,
\begin{equation}\label{eq:residuesine}
r_{\sigma,\eta}(k)=\frac{\sigma\,i}{\partial_\tau F_\eta(\tau_\eta(|k|),k)},\qquad r_{-,\eta}=-r_{+,\eta},
\end{equation}
and the explicit \emph{pole part} satisfies
\begin{equation}\label{eq:sine}
\begin{gathered}
\widehat P_\eta(t,k):=-\frac{2}{\partial_\tau F_\eta(\tau_\eta(|k|),k)}\,\sin\bigl(\tau_\eta(|k|)t\bigr)\,\mathbf 1_{t\ge0},\\
\mathcal L[\widehat P_\eta](\lambda,k)=\sum_{\sigma=\pm}\frac{r_{\sigma,\eta}(k)}{\lambda-\lambda_{\sigma,\eta}(k)}
\qquad(\Re\lambda>0).
\end{gathered}
\end{equation}
\item[\textup{(c)}] \textup{(Remainder.)} The remainder $\widehat G^{\mathrm{rem}}_\eta:=\widehat G_\eta-\delta-\widehat P_\eta$ has Laplace transform
\begin{equation}\label{eq:remtransform}
\mathcal L[\widehat G^{\mathrm{rem}}_\eta](\lambda,k)
=\widetilde G_\eta(\lambda,k)-1-\sum_{\sigma=\pm}\frac{r_{\sigma,\eta}(k)}{\lambda-\lambda_{\sigma,\eta}(k)}
\qquad(\Re\lambda>\Lambda_I),
\end{equation}
and the right-hand side --- one meromorphic function on $\Omega_k$ --- is holomorphic on $\{\Re\lambda>\Lambda_I\}\cup U_{+,k}\cup U_{-,k}$ and uniformly bounded on the two disks; all its other possible singularities lie in the fixed bounded set $Q_k$ of Lemma~\ref{lem:continuation}, about which no assertion is made. In particular the decomposition of Theorem~\textup{\ref{thm:main}\,(iv)} holds.
\end{itemize}
\end{proposition}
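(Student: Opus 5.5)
We prove the three parts in the order (a), (b), (c). Part (a) is a direct application of Paley--Wiener, and part (b) is an explicit Laurent computation combined with the parity symmetry of Section~\ref{sec:persistence}. Part (c) follows by subtracting the two and invoking the single meromorphic continuation of Lemma~\ref{lem:continuation}.

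For (a), fix $c>\Lambda_I$. By Lemma~\ref{lem:halfplane}, $h(\lambda):=D_\eta(\lambda,k)^{-1}-1$ is holomorphic on $\{\Re\lambda>\Lambda_I\}$, being zero-free there with $|D_\eta-1|\le\frac12$. By \eqref{eq:linedecay} it satisfies $\sup_{c'\ge c}\int|h(c'+i\omega)|^2\dd\omega\le\int C_c^2(1+|\omega|)^{-2}\dd\omega<\infty$, so $h\in H^2(\{\Re\lambda>c\})$. The Paley--Wiener theorem then gives a unique $g_\eta$ supported in $[0,\infty)$ with $e^{-ct}g_\eta\in L^2$ and $\mathcal L[g_\eta]=h$. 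Independence of $c$ follows from uniqueness: for $c<c'$, the function obtained at abscissa $c$ also represents $h$ on $\{\Re\lambda>c'\}$, since $e^{-c't}g=e^{-(c'-c)t}e^{-ct}g\in L^2$, and uniqueness in $H^2$ of that half-plane identifies the two.

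For (b), Lemma~\ref{lem:continuation} provides one meromorphic $\widetilde G_\eta$ on $\Omega_k$. On $U_{\sigma,k}$ its only singularity is the simple zero $\lambda_{\sigma,\eta}$ of $D_\eta$ from Theorem~\ref{thm:main}\,(i), so $\widetilde G_\eta$ has there the principal part $r_{\sigma,\eta}/(\lambda-\lambda_{\sigma,\eta})$ with $r_{\sigma,\eta}=1/\partial_\lambda D_\eta(\lambda_{\sigma,\eta},k)$. The remainder $H_\eta$ is holomorphic on each disk, since the disks are disjoint and each principal part is holomorphic on the other disk. For a uniform bound on $H_\eta$ we apply the maximum principle on slightly enlarged closed disks. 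On their boundaries $|D_\eta|\ge d_0/2$ by the Rouch\'e bound from the proof of (ii), and $|\lambda-\lambda_{\sigma,\eta}|\gtrsim r_0$ because $|\lambda_{\sigma,\eta}-\sigma i\tau_0|\lesssim|\eta|$. To stay strictly inside the region where Lemma~\ref{lem:gap} applies, we first shrink the disks by a factor and take $r_0<r_I$. For the residues we use $D_\eta(i\tau,k)=F_\eta(\tau,k)$, which gives $\partial_\lambda D_\eta(i\tau)=-i\,\partial_\tau F_\eta(\tau)$. Hence $r_{+,\eta}=i/\partial_\tau F_\eta(\tau_\eta)$. Since $F_\eta$ is even in $\tau$ (parity argument of Section~\ref{sec:persistence}), $\partial_\tau F_\eta(-\tau_\eta)=-\partial_\tau F_\eta(\tau_\eta)$, which gives $r_{-,\eta}=-r_{+,\eta}$ and \eqref{eq:residuesine}. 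Finally, with $\mathcal L[e^{\sigma i\tau t}\mathbf 1_{t\ge0}]=(\lambda-\sigma i\tau)^{-1}$ for $\Re\lambda>0$ and $\sin(\tau t)=(e^{i\tau t}-e^{-i\tau t})/(2i)$, we compute
\[
\sum_\sigma \frac{r_{\sigma,\eta}}{\lambda-\sigma i\tau_\eta}=\frac{i}{\partial_\tau F_\eta}\,\mathcal L\bigl[(e^{i\tau_\eta t}-e^{-i\tau_\eta t})\mathbf 1_{t\ge0}\bigr]=\frac{i\cdot 2i}{\partial_\tau F_\eta}\,\mathcal L[\sin(\tau_\eta t)\mathbf 1_{t\ge0}],
\]
which is exactly \eqref{eq:sine}.

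For (c), linearity of $\mathcal L$ on $\{\Re\lambda>\Lambda_I\}$ gives \eqref{eq:remtransform}, using $\mathcal L[\delta]=1$ and the fact that $\widetilde G_\eta=D_\eta^{-1}$ there. The right-hand side is a single meromorphic function on $\Omega_k$. It is holomorphic on $\{\Re\lambda>\Lambda_I\}$ because $D_\eta$ is zero-free there and the poles $\lambda_{\sigma,\eta}$ lie on $i\R$. On the disks it equals $H_\eta-1$, which is holomorphic and bounded by (b). By Lemma~\ref{lem:continuation} all remaining singularities lie in $Q_k$.

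The main obstacle is the identification of principal parts. The Laplace transform is defined only for $\Re\lambda>\Lambda_I$, while the poles sit on $i\R$. The argument therefore depends on the connectedness of $\Omega_k$ and on the identity theorem in Lemma~\ref{lem:continuation}, so that the function continued from the right half-plane is literally the $D_\eta^{-1}$ of the disks. The same point forces care in the uniform boundedness of $H_\eta$: we choose $r_0$ and $\eta_0$ so that the Rouch\'e lower bound holds on circles strictly inside the gap disks.
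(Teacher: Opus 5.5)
Your proposal is correct and follows essentially the same route as the paper: Paley--Wiener plus uniqueness across two abscissas for (a), the Laurent expansion of the single continuation from Lemma~\ref{lem:continuation} together with $\partial_\lambda D_\eta(i\tau,k)=-i\,\partial_\tau F_\eta(\tau,k)$ and the evenness of $F_\eta$ for (b), and linearity of $\mathcal L$ plus the identification with $H_\eta-1$ on the disks for (c). Your maximum-principle bound on $H_\eta$ is a slightly more explicit version of the paper's remark that $\partial_\lambda D_\eta$ stays uniformly away from zero; you do not need extra smallness of $\eta$ relative to $r_0$ to keep $\lambda_{\sigma,\eta}$ away from the boundary circle, because this follows directly from $|D_\eta|\ge d_0/2$ there and a uniform bound on $\partial_\lambda D_\eta$.
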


\begin{proof}
(a) By Lemma~\ref{lem:halfplane}, $D_\eta^{-1}-1$ is holomorphic on $\{\Re\lambda>\Lambda_I\}$ and, by \eqref{eq:linedecay}, $\sup_{c'\ge c}\|D_\eta(c'+i\cdot,k)^{-1}-1\|_{L^2(\dd\omega)}<\infty$ for each fixed $c>\Lambda_I$: this is membership in $H^2(\{\Re\lambda>c\})$. The Paley--Wiener representation theorem for $H^2$ of a half-plane \cite[Theorem~19.2]{Rudin} yields a unique inverse transform $g_\eta$ with $e^{-ct}g_\eta\in L^2$ supported in $t\ge0$; applying uniqueness to two abscissas $\Lambda_I<c<c'$ (the corresponding inverse transforms have the same Laplace transform on $\{\Re\lambda>c'\}$) shows that $g_\eta$ does not depend on $c$.

(b) \eqref{eq:laurent} is the Laurent expansion of the continued function $\widetilde G_\eta$ at the two simple zeros of $D_\eta$, legitimate because Lemma~\ref{lem:continuation} identifies $\widetilde G_\eta$ on the disks with the local function $D_\eta^{-1}$ of Sections~\ref{sec:schur}--\ref{sec:persistence}; boundedness of $H_\eta$ on the (shrunken) disks is uniform because $\partial_\lambda D_\eta$ is uniformly bounded away from zero and the pole set is compact. For \eqref{eq:residuesine}: on the axis $D_\eta(i\tau,k)=F_\eta(\tau,k)$, so $\partial_\lambda D_\eta|_{\lambda=\sigma i\tau_\eta}=-i\,\partial_\tau F_\eta(\sigma\tau_\eta,k)$, and $F_\eta$ is real and even in $\tau$ (Section~\ref{sec:persistence}), so $\partial_\tau F_\eta(-\tau_\eta,k)=-\partial_\tau F_\eta(\tau_\eta,k)$; \eqref{eq:residuesine} follows, and
$r_+e^{i\tau_\eta t}+r_-e^{-i\tau_\eta t}=2i\,r_+\sin(\tau_\eta t)=-\frac{2}{\partial_\tau F_\eta}\sin(\tau_\eta t)$
gives the second identity in \eqref{eq:sine} by the elementary formulas $\mathcal L[\mathbf 1_{t\ge0}e^{\lambda_*t}](\lambda)=(\lambda-\lambda_*)^{-1}$, $\Re\lambda>0$.

(c) For $\Re\lambda>\Lambda_I$ the identity \eqref{eq:remtransform} holds by (a), \eqref{eq:Gdef}, and \eqref{eq:sine}, since there $\widetilde G_\eta=D_\eta^{-1}$. The right-hand side is a single meromorphic function on the connected set $\Omega_k$ (Lemma~\ref{lem:continuation}); on the disks it equals $H_\eta-1$ by \eqref{eq:laurent} --- the subtracted principal parts cancel the Laurent singular parts of the \emph{same} continued function, which is exactly the identification supplied by Lemma~\ref{lem:continuation} --- hence it is holomorphic there with uniform bounds, and its remaining singularities lie in $Q_k$. The logical chain is: Laplace transform on the half-plane (a) $\to$ unique meromorphic continuation on the connected domain $\Omega_k$ (Lemma~\ref{lem:continuation}) $\to$ Laurent principal parts at $\lambda_{\pm,\eta}$ (b) $\to$ inverse Laplace transform of the principal parts \eqref{eq:sine}. No contour deformation across the imaginary axis is used at any point.
\end{proof}

\begin{remark}[General data: collective pole versus free streaming]\label{rem:generaldata}
For initial data $q_{k,0}$ supported in $S_k$ (more generally, in any compact set where the gap \eqref{eq:gap} holds), the numerator $\ellf(E_\eta s_{k,0})$ in \eqref{eq:rhotilde} is holomorphic on the disks and the density resolvent is meromorphic there; consistently with Proposition~\ref{prop:schur}\,(b), its possible poles are contained in $\{\lambda_{+,\eta},\lambda_{-,\eta}\}$ --- the operator-valued response has genuine rank-one poles at both (Proposition~\ref{prop:schur}\,(a)), while a fixed datum may cancel either residue. For data $q_{k,0}\in X_w$ whose momentum support meets the resonant set $\{\tau_\eta(|k|)+A_{\eta,k}(p)=0\}$, the free factor $s_{k,0}(\lambda)$ is holomorphic only in $\{\Re\lambda>0\}$ and does not continue meromorphically across the axis. If the data are in addition H\"older continuous --- for instance $q_{k,0}\in C^1_c(\R^3)$ --- then the boundary values on the axis exist and are H\"older continuous by classical Plemelj theory, and the density decomposes into the coherent, undamped pole part \eqref{eq:sine} plus a \emph{bounded, non-meromorphic branch contribution} (free-streaming phase mixing at the same frequency), blow-up singularities occurring only at the zeros of $D_\eta$. For data with $X_w$ regularity alone such pointwise boundary values may fail --- the relevant Cauchy-type transform does not preserve $L^1$-type classes --- and the branch contribution exists only in a weak sense; a quantitative version would require transversal Sobolev or Besov regularity of the data on the resonant set. In all cases, meromorphic continuation across the axis is guaranteed for gap-localized data, as in Proposition~\ref{prop:schur}\,(b), and this distinction disappears for sources localized in the interaction region.
\end{remark}

\begin{remark}[What is claimed, and what is not]\label{rem:noremdecay}
Proposition~\ref{prop:green} is a statement about the \emph{analytic structure of the Laplace transform}: the sine decomposition isolates the principal parts of the unique meromorphic continuation of $\mathcal L[\widehat G_\eta]$ at the two plasmon poles. It is \emph{not} a long-time asymptotic expansion of the density: no pointwise-in-time decay (indeed, no pointwise bound whatsoever) is claimed for the continuous component $\widehat G^{\mathrm{rem}}_\eta$ with the physical Coulomb exchange, and consequently no claim is made that $\widehat P_\eta$ dominates $\widehat G_\eta$ as $t\to\infty$. Such control requires boundary-value estimates for $E_\eta(i\omega+0,k)$, $m_\eta(i\omega+0,k)$ on the particle--hole continuum, where the gap fails; this is precisely the open problem isolated in Section~\ref{sec:conditional}, whose conditional Proposition~\ref{prop:conditional} converts the missing boundary regularity \eqref{eq:conditional} into $\langle t\rangle^{-N}$ decay of the continuous component. The smooth-kernel machinery of \cite{NY2} does not apply to $|p-q|^{-2}$ directly.
\end{remark}

Propositions~\ref{prop:res} and~\ref{prop:green} together prove Theorem~\ref{thm:main}\,(iv).

\section{Dispersive and Strichartz estimates for the plasmon channel}\label{sec:dispersive}

\begin{lemma}[Uniformly nondegenerate Hessian]\label{lem:hessian}
After shrinking $\eta_0$, for $|\eta|<\eta_0$ and $r\in I$,
\begin{equation}\label{eq:hessian}
\tau_\eta''(r)\ge c_I>0,\qquad \frac{\tau_\eta'(r)}{r}\ge c_I>0 .
\end{equation}
Consequently the Hessian of $k\mapsto\tau_\eta(|k|)$ is uniformly nondegenerate on $\Omega_I$, with radial eigenvalue $\tau_\eta''$ and the double tangential eigenvalue $\tau_\eta'/|k|$.
\end{lemma}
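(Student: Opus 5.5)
The plan is to transfer the Hartree bounds \eqref{eq:inputbounds} of Assumption~\ref{ass:input} to $\tau_\eta$ by a $C^2$-perturbation argument on the compact band. Since $I\Subset(0,\kappa_0)$ and $k_-\le r$, \eqref{eq:inputbounds} gives $\tau_0''(r)\ge c_0$ and $\tau_0'(r)/r\ge c_0$ on $I$. It therefore suffices to show
\[
\sup_{r\in I}\bigl(|\tau_\eta''(r)-\tau_0''(r)|+|\tau_\eta'(r)-\tau_0'(r)|\bigr)\le C_I|\eta| .
\]
Then shrinking $\eta_0$ so that $C_I\eta_0\le c_0/2$ (using $r\ge k_->0$ for the second quantity) yields \eqref{eq:hessian} with $c_I=c_0/2$, or $\min\{c_0,c_0\}/2$ after also absorbing $k_+$.

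To get the $C^2$ closeness I will use the implicit-function representation from the proof of Theorem~\ref{thm:main}\,(i). There $\tau_\eta(|k|)$ is the unique root near $\tau_0$ of the real function $F_\eta(\tau,k)$. Lemma~\ref{lem:Dpert} gives $\|D_\eta-D_0\|_{C^m_{k,\lambda}}\le C_I|\eta|$ with $m\ge6$, hence $\|F_\eta-F_0\|_{C^2}\le C_I|\eta|$ on a neighbourhood of the graph of $\tau_0$. Simplicity gives $|\partial_\tau F_0|\ge c>0$ there, uniformly on $\Omega_I$.

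The derivatives of the root are then explicit rational expressions. Writing $r=|k|$ and differentiating along a radial ray, we have
\[
\tau_\eta'=-\frac{\partial_rF_\eta}{\partial_\tau F_\eta},
\]
and $\tau_\eta''$ is a polynomial in the first and second partials of $F_\eta$ divided by $(\partial_\tau F_\eta)^3$. These are evaluated at $(\tau_\eta(r),r)$. Two facts then give the $O(|\eta|)$ difference from the same formulas at $\eta=0$: first, $|\tau_\eta-\tau_0|\le C|\eta|$, from Theorem~\ref{thm:main}\,(ii) or directly from the IFT; second, the Lipschitz continuity of the $C^2$ partials of $F_0$, which requires $C^3$ control and is available since $m\ge3$. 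The denominators stay bounded below after shrinking $\eta_0$. This step is the only one needing care, because it uses third derivatives of $F$ in $(\tau,r)$ uniformly on $\Omega_I$. I expect it to go through directly from the $C^m$ bound of Lemma~\ref{lem:Dpert}, since $D_\eta$ is holomorphic in $\lambda$ and $C^m$ in $k$ on the closed disks.

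Finally, for the Hessian statement I compute, for a radial function $\phi(k)=\tau_\eta(|k|)$,
\[
\nabla^2\phi=\tau_\eta''\,\hat k\otimes\hat k+\frac{\tau_\eta'}{|k|}\bigl(\Id-\hat k\otimes\hat k\bigr),
\]
which gives the radial eigenvalue $\tau_\eta''$ and the double tangential eigenvalue $\tau_\eta'/|k|$. Both are $\ge c_I$ by \eqref{eq:hessian}, and both are bounded above by the same $C^2$ bounds. So the Hessian is uniformly nondegenerate (indeed positive definite) on $\Omega_I$, uniformly in $|\eta|<\eta_0$.
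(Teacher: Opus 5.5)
Your proposal is correct and follows essentially the paper's argument: the paper transfers the bounds \eqref{eq:inputbounds} from $\eta=0$ to $\tau_\eta$ by $C^2$-closeness on the compact band, obtained from Theorem~\ref{thm:main}\,(ii) or, as you do explicitly, from the uniform $C^m$ bounds in the implicit function theorem via the formulas for $\tau_\eta'$ and $\tau_\eta''$. Your explicit version is actually the more self-contained of the two routes, since Theorem~\ref{thm:main}\,(ii) as stated only controls the error in $C^0(I)$. One small correction: the $C^3$ control of $F_0$ that you need comes from the smoothness of $D_0$ itself on the gap region, not from the difference bound of Lemma~\ref{lem:Dpert} alone.
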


\begin{proof}
At $\eta=0$ this is \eqref{eq:inputbounds}; the $C^2$-in-$r$ convergence $\tau_\eta\to\tau_0$ furnished by Theorem~\ref{thm:main}\,(ii) (or by the uniform $C^m$ bounds in the implicit function theorem) preserves the bounds for small $|\eta|$.
\end{proof}

\begin{proof}[Proof of Theorem~\ref{thm:main}\,(v)]
Write the kernel as an oscillatory integral with phase $\Phi_{t,x}(k)=k\cdot x+\sigma t\tau_\eta(|k|)$ and amplitude $\chi r_{\sigma,\eta}\in C_c^{m-1}(\Omega_I)$ (regularity of the residues follows from \eqref{eq:laurent} and part~(ii)). Where $\nabla\Phi\ne0$, integrate by parts; where stationary points exist, apply the three-dimensional stationary phase theorem with the uniform bounds of Lemma~\ref{lem:hessian}; since $m-1\ge5$, the amplitude has enough derivatives, and
$\|K_{\sigma,\eta}(t,\cdot)\|_{L^\infty_x}\le Ct^{-3/2}$ for $t\ge1$, uniformly in $\eta,\sigma$. The $L^2\to L^2$ bound is Plancherel; Riesz--Thorin interpolation gives the dispersive family. For the Strichartz estimates set $U(t)=e^{\sigma it\tau_\eta(|D|)}\widetilde\chi(D)$; then $\sup_t\|U(t)\|_{L^2\to L^2}\le C$ and $U(t)U(s)^*=e^{\sigma i(t-s)\tau_\eta(|D|)}|\widetilde\chi|^2(D)$ has kernel bounded by $C\min(1,|t-s|^{-3/2})$ in $L^1\to L^\infty$ (the $|t-s|\le1$ regime by the trivial $L^1_k$ bound of the amplitude). Since the decay exponent $3/2$ exceeds $1$, the abstract endpoint Strichartz theorem of Keel--Tao \cite{KeelTao} applies and yields the full admissible range $\frac1q+\frac3{2p}=\frac34$, $q\ge2$, including $(q,p)=(2,6)$.
\end{proof}

\section{First-order Ward-type cancellation of the plasma gap}\label{sec:ward}

The two components of \eqref{eq:tauXsplit} are individually of order one on the band (they converge to nonzero limits as $r\to0$; see Section~\ref{sec:numerics}); their sum, however, is quadratically small. The mechanism is the following identity.

\begin{lemma}[The self-energy increment is the Coulomb potential of $a_k$]\label{lem:AXident}
$A_{X,k}=W a_k$. Consequently, with $L=L_{0,*}$ and $\operatorname{Diff}(r):=\ellf(L^{-1}A_{X,k}L^{-1}a_k)-\ellf(L^{-1}a_kW_RL^{-1}a_k)$,
\begin{equation}\label{eq:diffcombined}
\operatorname{Diff}(r)=\frac{2r^2}{(2\pi)^6}\iint_{\R^3\times\R^3}
V(p-q)\,a_k(p)\,a_k(q)\,\frac{(p_1-q_1)^2}{L(p)^2L(q)^2}\dd p\dd q ,
\qquad k=re_1 ,
\end{equation}
an absolutely convergent integral with bounded integrand \textup{(}the factor $(p_1-q_1)^2$ cancels the Coulomb singularity, since $V(p-q)(p_1-q_1)^2\le4\pi$\textup{)}.
\end{lemma}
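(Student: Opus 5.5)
The plan is to prove the identity $A_{X,k}=Wa_k$ by a change of variables in the definition \eqref{eq:selfenergy} of $\Sigma$. Then I substitute it into $\operatorname{Diff}(r)$, write both terms as double integrals against the same kernel $V(p-q)a_k(p)a_k(q)$, and symmetrize in $(p,q)$. The difference of the two weights then collapses to a perfect square.

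\emph{Step 1 (the identity).} By \eqref{eq:selfenergy},
\[
\Sigma\bigl(p\mp\tfrac k2\bigr)=(2\pi)^{-3}\int V\bigl(p\mp\tfrac k2-q\bigr)g(q)\dd q .
\]
Substitute $q\mapsto q\mp\tfrac k2$ in the respective integral. Both integrals then carry the kernel $V(p-q)$, and the weights become $g(q-\tfrac k2)$ and $g(q+\tfrac k2)$. Subtracting gives
\[
A_{X,k}(p)=(2\pi)^{-3}\int V(p-q)\bigl[g(q-\tfrac k2)-g(q+\tfrac k2)\bigr]\dd q=(Wa_k)(p).
\]
Each integral converges absolutely, since $V\in L^1_{\mathrm{loc}}$ and $g$ is bounded with compact support (Lemma~\ref{lem:sigma}), so the shifts are legitimate.

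\emph{Step 2 (common kernel).} Since $a_k$ and $L^{-1}a_k$ are supported in $S_k\subset B_{R-1}$, we may replace $W_R$ by $W$ in the vertex term. Write $L=L_{0,*}$. Then
\begin{align*}
\ellf\bigl(L^{-1}A_{X,k}L^{-1}a_k\bigr)&=(2\pi)^{-6}\iint V(p-q)\,\frac{a_k(p)a_k(q)}{L(p)^2}\dd p\dd q,\\
\ellf\bigl(L^{-1}a_kWL^{-1}a_k\bigr)&=(2\pi)^{-6}\iint V(p-q)\,\frac{a_k(p)a_k(q)}{L(p)L(q)}\dd p\dd q .
\end{align*}
Both integrals are absolutely convergent: $|L|\ge c_I$ on $S_k$ by Lemma~\ref{lem:gap} at $\eta=0$, the functions $a_k$ are bounded with compact support, and $V$ is locally integrable. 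Fubini therefore applies.

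\emph{Step 3 (symmetrization).} The kernel $V(p-q)a_k(p)a_k(q)$ is symmetric under $p\leftrightarrow q$. So in the first integral I may replace $L(p)^{-2}$ by $\tfrac12\bigl(L(p)^{-2}+L(q)^{-2}\bigr)$. Subtracting the second integral, the weight becomes
\[
\tfrac12\bigl(L(p)^{-1}-L(q)^{-1}\bigr)^2=\frac{(L(q)-L(p))^2}{2L(p)^2L(q)^2}.
\]
With $k=re_1$ we have $L(p)=\tau_0(r)+2rp_1$, so $L(q)-L(p)=2r(q_1-p_1)$. The prefactor is then $\tfrac12\cdot4r^2=2r^2$, which is exactly \eqref{eq:diffcombined}.

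Boundedness of the integrand follows from $V(p-q)(p_1-q_1)^2\le4\pi|p-q|^2/|p-q|^2=4\pi$, together with $|L|\ge c_I$ and compact support. The only point needing care is justifying the shift and the splitting into separately convergent integrals, which Step 2 handles; I expect no real obstacle.
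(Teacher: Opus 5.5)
Your proposal is correct and follows the paper's proof essentially step for step. You shift variables in $\Sigma$ to get $A_{X,k}=Wa_k$, write both terms as quadratic forms with the common kernel $V(p-q)a_k(p)a_k(q)$ (replacing $W_R$ by $W$ on functions supported in $S_k$), and symmetrize in $(p,q)$ to obtain the perfect square $(L(p)-L(q))^2=4r^2(p_1-q_1)^2$. Your explicit absolute-convergence check justifying Fubini is a small addition the paper leaves implicit.
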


\begin{proof}
Shifting the integration variable in \eqref{eq:selfenergy},
\[
A_{X,k}(p)=\Sigma\Bigl(p-\tfrac k2\Bigr)-\Sigma\Bigl(p+\tfrac k2\Bigr)
=\frac{1}{(2\pi)^3}\int V(p-q)\Bigl[g\Bigl(q-\tfrac k2\Bigr)-g\Bigl(q+\tfrac k2\Bigr)\Bigr]\dd q=(Wa_k)(p).
\]
Hence both terms of $\operatorname{Diff}$ are quadratic forms with the same kernel:
\[
\operatorname{Diff}(r)=\frac{1}{(2\pi)^6}\iint V(p-q)\,a_k(p)a_k(q)\Bigl[\frac1{L(p)^2}-\frac1{L(p)L(q)}\Bigr]\dd p\dd q .
\]
Symmetrizing in $(p,q)$ and using $\frac12\bigl[\frac1{L_p^2}+\frac1{L_q^2}-\frac2{L_pL_q}\bigr]=\frac{(L_p-L_q)^2}{2L_p^2L_q^2}$ with $L(p)-L(q)=2r(p_1-q_1)$ gives \eqref{eq:diffcombined}. (On $S_k$, $W$ and $W_R$ coincide when applied to functions supported in $S_k$.)
\end{proof}

\begin{proof}[Proof of Theorem~\ref{thm:ward}]
All formulas in \eqref{eq:tauXsplit} and \eqref{eq:Denexact} make sense for every $r\in(0,k_+]$: the gap $\delta(r)=\tau_0(r)-(2r\Upsilon+r^2)$ is continuous and positive on $(0,k_+]$ with $\delta(r)\to\omega_p>0$ as $r\to0$ by Assumption~\ref{ass:input}, so $\delta_*:=\inf_{(0,k_+]}\delta>0$, and $\tau_0$ is bounded on $(0,k_+]$. By Proposition~\ref{prop:signs}\,(a) and these bounds there are $c_1,c_2>0$ with
\begin{equation}\label{eq:denorder}
c_1r^2\ \le\ \bigl|\ellf(L^{-2}a_k)\bigr|\ \le\ c_2r^2,\qquad 0<r\le k_+ .
\end{equation}
On the other hand, by Lemma~\ref{lem:AXident}, $V(p-q)(p_1-q_1)^2\le4\pi$ and $\|a_k\|_{L^1}\le r\|\nabla g\|_{L^1}$ (from $a_k(p)=-\int_{-1/2}^{1/2}k\cdot\nabla g(p+sk)\dd s$),
\begin{equation}\label{eq:difforder}
|\operatorname{Diff}(r)|\le\frac{2r^2}{(2\pi)^6}\cdot\frac{4\pi}{\delta_*^4}\,\|a_k\|_{L^1}^2\le C\,r^4 .
\end{equation}
Since $\tau_X=-\operatorname{Diff}/\ellf(L^{-2}a_k)$ by Proposition~\ref{prop:shift} and Lemma~\ref{lem:AXident}, \eqref{eq:denorder}--\eqref{eq:difforder} give $|\tau_X(r)|\le Cr^2$.

For the limit, $a_k/r\to-\partial_1g$ pointwise with the uniform domination $|a_k|/r\le\|\nabla g\|_\infty\mathbf 1_{B_{\Upsilon+1}}$, and $L(p)=\tau_0(r)+2rp_1\to\omega_p$ uniformly on compact sets; dominated convergence in \eqref{eq:diffcombined} and in \eqref{eq:Denexact} yields
\begin{gather*}
\frac{\operatorname{Diff}(r)}{r^4}\longrightarrow \frac{2}{(2\pi)^6\,\omega_p^4}\iint V(p-q)\,\partial_1g(p)\partial_1g(q)(p_1-q_1)^2\dd p\dd q,\\
\frac{\bigl|\ellf(L^{-2}a_k)\bigr|}{r^2}\longrightarrow\frac{4}{(2\pi)^3\,\omega_p^{3}}\int_\R\varphi_g(u)\dd u=\frac{4\pi^2}{(2\pi)^3\,\omega_p},
\end{gather*}
where we used $\int_\R\varphi_g=\int_{\R^3}g=(2\pi)^3\rho_g=\pi^2\omega_p^2$. Taking the quotient (and the sign from $\tau_X=\operatorname{Diff}/|\ellf(L^{-2}a_k)|$) gives the first expression in \eqref{eq:betaX} after inserting $V=4\pi|\cdot|^{-2}$. The second expression follows from $(p_1-q_1)^2/|p-q|^2=1-|p_\perp-q_\perp|^2/|p-q|^2$ and $\iint\partial_1g(p)\partial_1g(q)\dd p\dd q=(\int\partial_1g)^2=0$. The final statement of the theorem is immediate from $|\tau_X(r)|\le Cr^2$ and the existence of the limit $\beta_X$.
\end{proof}

\begin{remark}\label{rem:wardmeaning}
Theorem~\ref{thm:ward} settles, at the level of the first-order dispersion coefficient, the expected exchange invariance of the plasma gap: the physically famous statement that the plasma frequency at $k=0$ is protected while exchange renormalizes the \emph{stiffness} of the plasmon branch. What remains open is the corresponding statement at the level of the full resolvent uniformly down to $k=0$, where the Coulomb factor $V(k)\sim|k|^{-2}$ must be cancelled against the density response through the dynamic exchange vertex; see Section~\ref{sec:conclusion}.
\end{remark}

\section{Numerical illustration}\label{sec:numerics}

We evaluate the objects of Theorems~\ref{thm:main}--\ref{thm:ward} for the smoothed Fermi ball
\[
g(p)=G(|p|),\qquad G=1\ \text{on }[0,\alpha\Upsilon],\qquad
G(s)=1-S_9\Bigl(\tfrac{s-\alpha\Upsilon}{(1-\alpha)\Upsilon}\Bigr)\ \text{on }[\alpha\Upsilon,\Upsilon],
\]
with $\Upsilon=1$, $\alpha=\tfrac12$, and the $C^9$ generalized smoothstep of degree $19$,
\[
S_9(x)=x^{10}\sum_{n=0}^{9}\binom{9+n}{n}\binom{19}{9-n}(-x)^n ,
\]
whose derivatives of orders $1,\dots,9$ vanish at both endpoints. Consequently $f(e)=G(\sqrt e)\in C^{9}([0,\infty))$ (the junctions lie at $e=\alpha^2\Upsilon^2$ and $e=\Upsilon^2$, away from $e=0$, where $f$ is constant), and $G$ vanishes to the exact order $10$ at $s=\Upsilon$; hence \textup{(G1)} holds with $m=6$ and \textup{(G2)} holds with $n_1=10$, so this equilibrium genuinely satisfies the hypotheses of Theorems~\ref{thm:main}--\ref{thm:ward}. The Hartree branch is computed from the one-dimensional marginal representation
\[
F_0(\tau,r)=1-\frac1{\pi^2}\int_{-\Upsilon}^{\Upsilon}\frac{\varphi_g(u)}{(\tau+2ru)^2-r^4}\dd u,
\qquad \varphi_g(u)=2\pi\int_{|u|}^{\Upsilon}sG(s)\dd s,
\]
by adaptive quadrature and root finding; the survival threshold solves $\Phi(\kappa_0)=0$ with
$\Phi(r)=1-(4\pi^2r^2)^{-1}\int\varphi_g(u)\,[(\Upsilon+u)(\Upsilon+r+u)]^{-1}\dd u$, giving $\kappa_0\approx0.23251$. The two-dimensional integrals \eqref{eq:Denexact} and the four-dimensional integrals \eqref{eq:diffcombined} and $\ellf(L^{-1}a_kW_RL^{-1}a_k)$ (reduced to four dimensions by the angular average $\int_0^{2\pi}(A-B\cos\theta)^{-1}\dd\theta=2\pi(A^2-B^2)^{-1/2}$) are evaluated on staggered Gauss--Legendre grids with $112\times96$ (outer) and $120\times104$ (inner) nodes; comparison with the refined $128\times112$ and $136\times120$ grids \emph{at every sample point} certifies that the combined form \eqref{eq:diffcombined}, whose integrand is bounded, is converged to relative accuracy $\le5\times10^{-5}$ on the computed range, and $\tau_X^{\mathrm{self}}$ is reconstructed as $\tau_X^{\mathrm{vertex}}$ plus the combined difference to avoid cancellation errors --- the net shift is \emph{never} computed as a direct difference of separately quadratured $O(10^{-1})$ terms. A single execution of the Python script included as an ancillary file with this submission (\texttt{anc/numerics\_tauX.py}) reproduces every number quoted in this section, the sweep data, the quadrature convergence table, and Figure~\ref{fig:numerics}, and enforces the checks below by assertions.

The computation passes the following independent checks: $\tau_0(0^+)^2=8\pi\rho_g$ to six digits; $\tau_0$ is convex and increasing and meets the continuum edge at $\kappa_0$ (cf.\ \cite[Theorem~2.6]{NY1}); the three sign predictions of Proposition~\ref{prop:signs} hold at every grid point; the small-$r$ limits match the closed forms of Theorem~\ref{thm:ward} to four digits, namely $|\ellf(L^{-2}a_k)|/r^2\to1/(2\pi\omega_p)=0.3732$ and $\tau_X/r^2\to\beta_X=-0.1089$ against the independent evaluation of \eqref{eq:betaX}; and the fitted log--log slope of $|\tau_X|$ at small $r$ is $1.997$.

\begin{figure}[t]
\centering
\includegraphics[width=\textwidth]{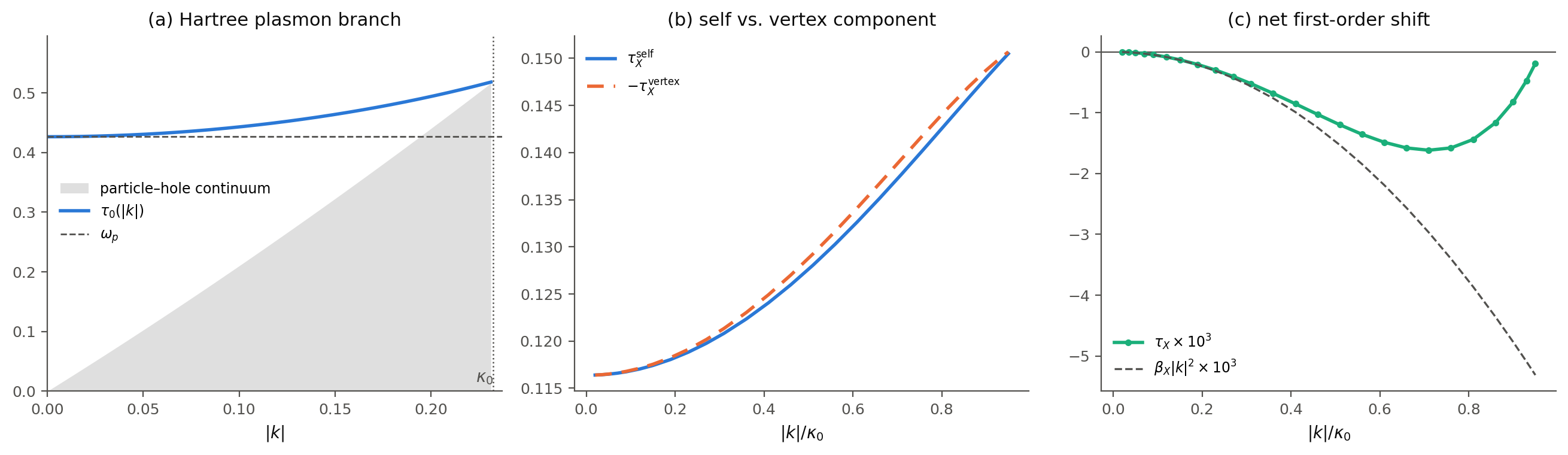}
\caption{$C^9$ smoothed Fermi ball, $\Upsilon=1$, $\alpha=\frac12$. (a) The Hartree plasmon branch $\tau_0(|k|)$ above the particle--hole continuum $|A_{0,k}|\le2|k|\Upsilon+|k|^2$; the dashed line is the plasma gap $\omega_p=\sqrt{8\pi\rho_g}$. (b) The first-order components $\tau_X^{\mathrm{self}}$ and $-\tau_X^{\mathrm{vertex}}$: the curves nearly coincide, illustrating the cancellation of Theorem~\ref{thm:ward}; each is of order $10^{-1}$ while their sum is of order $10^{-3}$. (c) The net shift $\tau_X\times10^3$ on the computed range $[0.02\,\kappa_0,\,0.95\,\kappa_0]$, with the quadratic law $\beta_X|k|^2$ (dashed) at small wave numbers; the net shift is negative (softening) throughout, most negative near $0.71\,\kappa_0$, and returns towards zero as the threshold is approached.}
\label{fig:numerics}
\end{figure}

The findings, for this model equilibrium and on the computed range $r/\kappa_0\in[0.02,\,0.95]$, are as follows (Figure~\ref{fig:numerics}). The self-energy and vertex components obey $\tau_X^{\mathrm{self}}\in[0.116,\,0.151]$ and $\tau_X^{\mathrm{vertex}}\in[-0.151,\,-0.116]$ and cancel to within $1.2\%$ everywhere (maximal ratio $|\tau_X|/\tau_X^{\mathrm{self}}=1.17\%$, at $0.71\,\kappa_0$). The net shift follows the quadratic law $\tau_X\simeq\beta_Xr^2$ with $\beta_X=-0.1089$ up to $r\approx0.4\kappa_0$; compared with the Hartree stiffness $\tau_0(r)\simeq\omega_p+\beta_0r^2$, $\beta_0=1.641$, the exchange softens the plasmon dispersion by $\eta\,\beta_X/\beta_0\approx-6.6\%\,\eta$. The net shift attains its most negative value $\approx-1.62\times10^{-3}$ near $r\approx0.71\kappa_0$ and then increases again towards zero as the survival threshold is approached, remaining negative on the computed range --- with the caveat that near the threshold the admissible coupling range $\eta_0(I)$ shrinks, so the first-order coefficient retains its meaning while the range of $\eta$ it governs narrows; we therefore do not attach significance to the precise behavior at the upper end of the range. In particular, retaining only $\tau_X^{\mathrm{self}}$ --- the ``dispersion-correction-only'' approximation --- mispredicts even the sign of the net shift throughout the computed range, quantifying Remark~\ref{rem:meaning}.

\section{A conditional criterion for the continuous component}\label{sec:conditional}

\begin{proposition}\label{prop:conditional}
Let $N\ge1$. Suppose that, for $k\in\Omega_I$, the function $\mathcal L[\widehat G^{\mathrm{rem}}_\eta](\cdot,k)$ of \eqref{eq:remtransform} --- defined a priori on $\{\Re\lambda>\Lambda_I\}\cup U_{+,k}\cup U_{-,k}$ --- extends continuously to the closed right half-plane, with boundary value
\[
\mathcal H_\eta(\omega,k)=\lim_{\gamma\downarrow0}\Bigl[D_\eta(\gamma+i\omega,k)^{-1}-1-\sum_{\sigma=\pm}\frac{r_{\sigma,\eta}(k)}{\gamma+i\omega-\lambda_{\sigma,\eta}(k)}\Bigr] .
\]
If, in addition, for some cutoff $\chi\in C_c^\infty(\Omega_I)$,
\begin{equation}\label{eq:conditional}
\sup_{k\in\Omega_I}\ \sum_{j=0}^{N}\bigl\|\partial_\omega^j\bigl(\chi(k)\mathcal H_\eta(\omega,k)\bigr)\bigr\|_{L^1_\omega(\R)}\ \le\ C_N ,
\end{equation}
then $\sup_{k\in\Omega_I}|\chi(k)\widehat G^{\mathrm{rem}}_\eta(t,k)|\le C_N'\langle t\rangle^{-N}$.
\end{proposition}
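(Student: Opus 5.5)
The plan is to pass from the Laplace transform on $\{\Re\lambda>\Lambda_I\}$ down to the imaginary axis, identify $\widehat G^{\mathrm{rem}}_\eta(\cdot,k)$ with the inverse Fourier transform of the boundary value $\mathcal H_\eta(\cdot,k)$, and then trade $\omega$-derivatives for powers of $t$ by integration by parts.

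\emph{Step 1: reduce to the axis.} Fix $k\in\Omega_I$ and set $h(\lambda):=\mathcal L[\widehat G^{\mathrm{rem}}_\eta](\lambda,k)$. By hypothesis, $h$ is holomorphic on the open right half-plane and continuous up to $i\R$. For $\Re\lambda>\Lambda_I$ we have the decomposition $h=(D_\eta^{-1}-1)-\sum_\sigma r_\sigma(\lambda-\lambda_\sigma)^{-1}$. The first term is in $H^2$ by Lemma~\ref{lem:halfplane}, and each pole term is in $H^2$ of every half-plane $\{\Re\lambda>c\}$ with $c>0$. Hence $h(c+i\cdot)\in L^2$, and
\[
e^{-ct}\widehat G^{\mathrm{rem}}_\eta(t,k)=\frac1{2\pi}\int_\R e^{i\omega t}h(c+i\omega)\dd\omega .
\]
I then lower $c$ to $0^+$. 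The bound $\|\mathcal H_\eta\|_{L^1_\omega}\le C$ with $j=0$, together with continuity up to the axis, should give $\widehat G^{\mathrm{rem}}_\eta(t,k)=\frac1{2\pi}\int e^{i\omega t}\mathcal H_\eta(\omega,k)\dd\omega$ for $t\ge0$, with equality in the sense of distributions and then pointwise, since the right side is continuous and bounded. Concretely, I would argue by distributional convergence: for $\psi\in C_c^\infty(0,\infty)$, the pairing $\int e^{-ct}\widehat G^{\mathrm{rem}}\psi\dd t$ is an integral of $h(c+i\omega)$ against $\widehat\psi$. As $c\downarrow0$, this converges to the corresponding integral against the boundary value $\mathcal H$.

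\emph{Main obstacle.} The hard part is justifying this passage to the limit. It requires a uniform-in-$c$ domination of $h(c+i\omega)$ on $\{0<c\le\Lambda_I+1\}$. For large $|\omega|$ this comes from \eqref{eq:Ddecay}, which applies on $\mathcal R_k$ and gives $O(1/|\omega|)$ for $D_\eta^{-1}-1$, while the pole terms are explicitly $O(1/|\omega|)$. For bounded $|\omega|$, on the compact strip closure, it comes from the assumed continuous extension. An $O(1/|\omega|)$ tail is not integrable, so I would pair against $\widehat\psi$, which is Schwartz, to close the argument. The $L^1$ hypothesis then upgrades the distributional identity to a pointwise one.

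\emph{Step 2: decay.} Multiply by $\chi(k)$ and integrate by parts $N$ times in $\omega$. This is legitimate because $\partial_\omega^j(\chi\mathcal H_\eta)\in L^1$ for $j\le N$: an $L^1$ function with an $L^1$ derivative is absolutely continuous and tends to $0$ at $\pm\infty$, so no boundary terms appear. This yields
\[
|t|^N|\chi(k)\widehat G^{\mathrm{rem}}_\eta(t,k)|\le\frac1{2\pi}\|\partial^N_\omega(\chi\mathcal H_\eta)\|_{L^1}.
\]
The case $j=0$ gives the bound for $|t|\le1$, and for $t<0$ the function vanishes by causality. Combining the two and taking the supremum over $k$ using \eqref{eq:conditional} gives $\langle t\rangle^{-N}$ decay with $C_N'=C\,C_N$.
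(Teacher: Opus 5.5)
Your proposal is correct and is essentially the paper's proof: write $\chi\widehat G^{\mathrm{rem}}_\eta(t,k)=\frac{1}{2\pi}\int_\R e^{i\omega t}\chi(k)\mathcal H_\eta(\omega,k)\dd\omega$, integrate by parts $N$ times with the boundary terms killed by the $W^{N,1}$ hypothesis, and use the $j=0$ bound for $0\le t\le1$ (and causality for $t<0$). The paper takes this inversion formula on the imaginary axis for granted, whereas your Step~1 justifies it; the one ingredient you leave implicit is Cauchy's theorem on rectangles, which moves the Bromwich line from $c>\Lambda_I$ down to $0<c\le\Lambda_I$, and your uniform bounds (continuity on the compact part of the strip, plus \eqref{eq:Ddecay} and the explicit pole terms for $|\Im\lambda|>2M_I+\Lambda_I$) make that step legitimate.
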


\begin{proof}
For $t\ge1$ write $\chi\widehat G^{\mathrm{rem}}_\eta(t,k)=\frac1{2\pi}\int_\R e^{i\omega t}\chi(k)\mathcal H_\eta(\omega,k)\dd\omega$ and integrate by parts $N$ times in $\omega$; the boundary terms vanish by the $W^{N,1}$ decay at infinity, and \eqref{eq:conditional} gives $t^{-N}$. For $0\le t\le1$ use the $j=0$ bound.
\end{proof}

\begin{remark}\label{rem:conditionalcoulomb}
For a smooth screened exchange kernel, \eqref{eq:conditional} is expected to follow from weighted momentum estimates of the type developed in \cite[Sections~3--4]{NY2}. For the physical Coulomb exchange the operator $W_R$ itself is bounded (Lemma~\ref{lem:WR}), but $\omega$- and $k$-derivatives of $E_\eta(i\omega+0,k)$ on the particle--hole continuum hit the non-integrable derivative singularity of $|p-q|^{-2}$; establishing \eqref{eq:conditional} in this setting appears to require Hardy--Littlewood--Sobolev-type bounds after moving derivatives onto the exchange kernel, or a limiting absorption principle in fractional Sobolev spaces. We leave this for future work.
\end{remark}

\section{Concluding remarks and open problems}\label{sec:conclusion}

The logic established in this paper is modular: the physical Coulomb exchange is a weakly singular compact operator on bounded relative-momentum regions; on compact subthreshold bands the Hartree plasmons are uniformly separated from the particle--hole continuum; the small exchange block is inverted by a Neumann series and the Coulomb--Hartree density channel is eliminated by a Feshbach--Schur complement; the resulting dielectric function is a uniformly $C^m$, jointly holomorphic small deformation of the Hartree one, so Rouch\'e's theorem, real-analytic implicit function arguments, and parity give persistence, pure imaginarity, and $\pm$ symmetry; the affine dependence on $\eta$ upgrades the perturbation theory to a convergent series with explicit coefficients; the poles are realized by closed-form eigenfunctions; Laurent expansion and stationary phase produce the sine-form Green function, the $t^{-3/2}$ dispersion, and Strichartz estimates; and the identity $A_{X,k}=Wa_k$ yields the first-order Ward-type cancellation with an explicit stiffness correction.

Four problems remain open, and we state them in the order in which they seem approachable.
\begin{itemize}
\item[1.] \emph{Uniformity down to $k=0$.} Theorem~\ref{thm:ward} proves the exchange invariance of the plasma gap at the level of the first-order coefficient, with the exact rate $O(r^2)$. Extending this to a uniform description of the resolvent on $0<|k|\le k_+$ requires tracking the cancellation between $V(k)\sim|k|^{-2}$ and the $|k|^2$-smallness of the density response through the dynamic exchange vertex --- a Ward-type identity at the level of operators rather than coefficients. The geometric series of Lemma~\ref{lem:geom} organizes this order by order and seems a natural starting point.
\item[2.] \emph{The threshold $|k|\to\kappa_0$.} The isolated pole meets the continuum; Rouch\'e perturbation is insufficient, and a Plemelj boundary-value and resonance analysis for the operator family $E_\eta m_\eta$ is needed, together with the first-order exchange correction of the survival threshold $\kappa_\eta$ and of the Landau damping rate beyond it (cf.\ \cite[Theorem~2.9]{NY1}, \cite{NguyenJFA}).
\item[3.] \emph{The continuous component.} Establish \eqref{eq:conditional} for the physical Coulomb exchange (Remark~\ref{rem:conditionalcoulomb}), replacing the smooth-kernel estimates of \cite{NY2} by fractional-regularity boundary-value estimates.
\item[4.] \emph{Nonlinearity.} The plasmon component disperses only at the Klein--Gordon rate $t^{-3/2}$ and carries the low-frequency Coulomb singularity; a nonlinear theory on the band will require momentum-dependent echo estimates in the presence of exchange (cf.\ \cite{NY2}) and a normal form for plasmon--plasmon interactions. The exact modes of Theorem~\ref{thm:main}\,(iii), with their explicit eigenfunctions and unit-density normalization, and the Strichartz estimates of Theorem~\ref{thm:main}\,(v) are intended as inputs for this program.
\end{itemize}

\appendix

\section{Fourier conventions and comparison with \texorpdfstring{\cite{NY1,NY2}}{Nguyen--You}}\label{app:conventions}

The papers \cite{NY1,NY2} evaluate two-variable kernels at $(k-p,p)$ and write the transported profile difference as $a_{k-p,p}=g(k-p)-g(p)$, while we use the symmetric coordinates $(p+\frac k2,p-\frac k2)$ and $a_k(p)=g(p-\frac k2)-g(p+\frac k2)$. The substitution $p\mapsto p+\frac k2$, together with the reflection $p\mapsto-p$ under which $a_k$ is odd and $A_{0,k}$ is odd, identifies the two descriptions; the apparent sign discrepancies of the factors $i$ in the dielectric functions are artifacts of this change of variables. The physically meaningful objects --- the zero set of $D_\eta$, the multiplicity of the zeros, and the residue operators --- are independent of the convention.

There is also a normalization dictionary: \cite{NY1} takes $\widehat w(k)=|k|^{-2}$ and unnormalized momentum integrals $\int\dd p$, whereas we take $V(k)=4\pi|k|^{-2}$ and $\ellf=(2\pi)^{-3}\int\dd p$. The reduction is \emph{exact}: with $\mu:=g/(2\pi^2)$ our $D_0$ of \eqref{eq:D0} coincides with the dielectric function of \cite{NY1} for $\widehat w(k)=|k|^{-2}$ and equilibrium $\mu(|p|^2)$, so \cite[Theorems~1.1 and~2.6]{NY1} apply verbatim and their quantitative statements transfer through $\rho_\mu=\int\mu=(2\pi^2)^{-1}\int g=4\pi\rho_g$; e.g.\ $\tau_*(0)=\sqrt{2\rho_\mu}$ becomes $\omega_p=\sqrt{8\pi\rho_g}$. (\cite[Remark~2.8]{NY1} covers general long-range kernels with $\widehat w\ge0$, $\widehat w(0)=\infty$, $\lim_{|k|\to\infty}\widehat w<\infty$, $\widehat w'\le0$; the rescaling above is sharper in that it imports the quantitative statements as well.)

\section{Uniform choice of the Rouch\'e disks}\label{app:disks}

First take $r_0$ smaller than $\tfrac14\min_{r\in I}\{\tau_0(r)-(2r\Upsilon+r^2)\}$ and $\tfrac14\min_{r\in I}2\tau_0(r)$; then for every $k\in\Omega_I$ the closed disks $\overline{U_{\pm,k}}$ stay within the gap region of Lemma~\ref{lem:gap} and are disjoint.

We claim that, after further shrinking $r_0$, each disk contains exactly one zero of $D_0(\cdot,k)$ (namely $\sigma i\tau_0(|k|)$, simple), \emph{uniformly} in $k\in\Omega_I$. Suppose not. Then there are radii $r_n\downarrow0$, wave numbers $k_n\in\Omega_I$, a sign $\sigma$, and zeros $\lambda_n\ne\sigma i\tau_0(|k_n|)$ of $D_0(\cdot,k_n)$ (or zeros of multiplicity $\ge2$) with $|\lambda_n-\sigma i\tau_0(|k_n|)|<r_n$. Passing to a subsequence, $k_n\to k_*\in\Omega_I$ and $\lambda_n\to\lambda_*:=\sigma i\tau_0(|k_*|)$. Since $\lambda_*$ is a simple isolated zero of $D_0(\cdot,k_*)$ (Assumption~\ref{ass:input}), fix $\rho>0$ so small that the closed disk $\overline{B(\lambda_*,\rho)}$ lies in the gap region and contains no zero of $D_0(\cdot,k_*)$ other than $\lambda_*$; then
\[
m:=\min_{|\lambda-\lambda_*|=\rho}\bigl|D_0(\lambda,k_*)\bigr|>0 .
\]
By continuity in $k$ of the integrand in \eqref{eq:D0} on the gap region (Lemma~\ref{lem:gap}) and dominated convergence, $D_0(\cdot,k_n)\to D_0(\cdot,k_*)$ uniformly on $\overline{B(\lambda_*,\rho)}$; hence for all large $n$,
\[
\bigl|D_0(\lambda,k_n)-D_0(\lambda,k_*)\bigr|<m\le\bigl|D_0(\lambda,k_*)\bigr|
\qquad\text{on the circle }|\lambda-\lambda_*|=\rho,
\]
and Rouch\'e's theorem gives that $D_0(\cdot,k_n)$ has exactly as many zeros in $B(\lambda_*,\rho)$, counted with multiplicity, as $D_0(\cdot,k_*)$ --- namely one. But for large $n$ the disk $B(\lambda_*,\rho)$ contains both $\sigma i\tau_0(|k_n|)$ and the extra zero $\lambda_n$ (or a multiple zero), giving a count $\ge2$ --- a contradiction.

Finally, $d_0>0$ in \eqref{eq:d0}: the map $(k,\theta,\sigma)\mapsto|D_0(\sigma i\tau_0(|k|)+r_0e^{i\theta},k)|$ is continuous and positive on the compact set $\Omega_I\times[0,2\pi]\times\{\pm\}$, hence attains a positive minimum.

\end{document}